\newcommand{\E}{{\fontencoding{X2}\selectfont\cyryat}}
\theoremstyle{plain}
\newtheorem{theorem}{Theorem}[section]
\newtheorem{lemma}[theorem]{Lemma}
\newtheorem{corollary}[theorem]{Corollary}
\newtheorem{proposition}[theorem]{Proposition}
\theoremstyle{remark}
\newtheorem{remark}[theorem]{Remark}
\def\T{\mbox{\protect\small$\mathrm{T}$}}
\def\ded{{\boldsymbol\eta}}
\def\btau{{\boldsymbol{\tau}}}
\def\G{{\boldsymbol{\mathfrak{G}}}}
\def\B{{\boldsymbol{\mathfrak{B}}}}
\def\bA{\boldsymbol{A}}
\def\bB{\boldsymbol{B}}
\def\bC{\boldsymbol{C}}
\def\bD{\boldsymbol{D}}
\def\calN{\mathcal{N}}
\newcommand{\SUM}[3]{{\sideset{}{_{#1}}\sum_{#2}^{#3}}}
\def\elambda{e_{\!\lambda}}
\def\emu{e_{\!\mu}}
\def\sigmalambda{{\sigma_{\!\!\s\lambda}}}
\def\Aleph{{\boldsymbol{\aleph}}}
\newcommand{\spin}[1]{\langle#1\rangle}
\def\tsint{\raise-0.15ex\hbox{\Larger[2]{\smallint}}\!}
\newcommand{\thinunderbrace}[3][4]
{\Smaller[#1]{\underbrace{\Larger[#1]{\ds#2}}_{\Larger[#1]{\ds#3}}}}
\author[Yu.~Brezhnev]{Yurii V.~Brezhnev}
\title[On Jacobi's $\theta$-functions and $\vartheta$-constants]
{Non-canonical extension of $\theta$-functions\\and
modular integrability of $\vartheta$-constants}
\email{brezhnev@mail.ru}
\date{3rd August 2013}
\keywords{Jacobi's theta-functions,
Weierstrass' $\sigma,\zeta,\wp$-functions, power series,
Hamiltonian differential equations,
algebraic integrals, modular transformations, sixth Painlev\'e
transcendent}
\thanks{Research supported by
the Federal Targeted Program under contract 14.B37.21.0911}
\begin{document}

\begin{flushright}
\smaller \em Proceedings of the Royal Society of Edinburgh
{\bf A~143}$(4)$ $(2013)$, \textup{689--738}\\[0.5ex]
{\tt http://arXiv.org/abs/1011.1643}
\end{flushright}
\bigskip\bigskip\bigskip

\begin{abstract}
We present new results in the theory of the classical
$\theta$-functions of Jacobi: series expansions and defining ordinary
differential equations (\odes). The proposed dynamical systems turn out
to be Hamiltonian and define fundamental differential properties of
theta-functions; they also yield an exponential quadratic extension of
the canonical $\theta$-series. An integrability condition of these
\odes\ explains appearance of the modular $\vartheta$-constants and
differential properties thereof. General solutions to all the \odes\
are given. For completeness, we also solve the Weierstrassian elliptic
modular inversion problem and consider its consequences. As a
nontrivial application, we apply proposed techni\-que to the Hitchin
case of the sixth Painlev\'e equation.

%\bigskip
%\bigskip\bigskip
%\noindent
%Yu.~V.~Brezhnev\\
%Department of Quantum Field Theory\\
%Tomsk State University, Russia\\
\end{abstract}

\maketitle\thispagestyle{empty}

\newpage
\tableofcontents

\section{Introduction}

\noindent The theta-functions of Jacobi and the Weierstrassian basis of
functions $\{\sigma$, $\zeta$, $\wp$, $\wp'\}$ arise in numerous
theories and applications. Since their discovery in the late 1820's,
this field became the subject of intensive study. The majority of
results and current form of the theory were obtained in the very works
of Jacobi and Weierstrass, and their contemporaries: Hermite
\cite{hermite}, Enneper \cite{enneper}, Kiepert,  Neumann
\cite{neumann}, Halphen \cite{halphen}, Hurwitz \cite{hurwitz},
Frobenius \cite{frobenius}, Fricke \cite{fricke,klein} et al.
Presently,  the current monographic literature on this topic may run
into tens of items. Thorough treatises (not only in Deutsche/Latin
\cite{somov}) had appeared as far back as Jacobi's lifetime; see, for
example, \cite{verhulst}, \cite{gudermann}\footnote{What is now called
an elliptic function was named a `Modular-Function' by Gudermann in
\cite{gudermann}.} and, continuing the list further, we should mention
treatises \cite{tikh,cayley,durege,forsyth,hancock,koenig,
krause,riemann,thomae,weber,briot},  references in these works, and
especially encyclopedic paper by Fricke \cite{fricke2}. Fricke had also
planned a 3rd volume to the series \cite{fricke} but it was composed
only recently \cite{fricke3}. See also the excellent history survey by
Koenigsberger \cite{koenig2}. Literature given in the reference list
does not claim on completeness, but comprehensively covers the known
properties of elliptic, modular, and $\theta$-functions  known
nowadays. In addition to monographs listed above it should be mentioned
the later presentation of the theory
\cite{rauch,a,lawden,farkas,rankin,hurwitz2,mumford,apostol,123,
armitage} and handbook literature as well
\cite{we2,abramowitz,bateman}. A great number of specific examples can
be found  in the classical \emph{`A course of modern analysis'} by
Whittaker \& Watson \cite{WW} and  very detailed exposition of the
theory is in Weber's \emph{`Lehrbuch der Algebra'} \cite{weber} and in
the two-volume Halphen treatise with the posthumous issue of the 3rd
volume \cite{halphen}. As  a source for the formulae, in most of cases
Schwarz's collection of Weierstrass' and Jacobi's classical results
\cite{we2} is by no means lacking and the four volume set by Tannery \&
Molk \cite{tannery} hitherto contains most exhaustive information along
these lines. The very Weierstrass' \cite{we} and Jacobi's \cite{jacobi}
works, being very detailed in presentation, should be referred hereinto
since they have still remained a source of important observations.

\subsection{The paper content and comments on results}

In the present work we describe some new properties of Jacobi's
$\theta$-series, that, to
the best of our knowledge, %, oddly to say,
have not appeared in the extensive %so comprehensive
literature on this topic. Among these are series expansions,
differential equations, and their consequences. A characteristic
property of the Jacobi--Weierstrass theory is explicit analytic
$\theta,\wp$-formulae for solutions of applied problems. In this
connection we shall  exhibit  examples: modular inversion problem,
differential computations of Weierstrassian functions, and applications
to the famous sixth Painlev\'e transcendent (Sect.~\ref{S10}). Other
applications recently announced in \cite{br1,br3,br4}.

\subsubsection{The power series}

The series expansions of elliptic, modular, and $\theta$-functions are
an ever-operating instrument in many problems up to now.  This is
because coefficients of the series have nice analytic and combinatoric
properties. Suffice it to mention applications of the function series
for various $\theta$-quotients \cite{weber,tannery}, number-theoretic
$q$-series, series of Lambert \cite{apostol}, the famous
McKay--Thompson series and their corollaries like `Moonshine
Conjecture' and its modern extensions. Trigonometric series for
Jacobi's $\theta$-functions are put for their definition (see
Sect.~\ref{S2.1}) and  power series for Weierstrass' $\sigma$-function
is very well known \cite{we2, abramowitz}. It is frequently reproduced
in the literature and has multidimensional generalizations
\cite{eilbeck}. Considering this, it is somewhat surprise facet is the
fact that the power series expansions for $\theta$-functions, \ie,
$\theta$-analogs of Weierstrassian $\sigma$-series, are absent
heretofore. It is of interest to remark that even Jacobi
attempted\footnote{In a 1828 letter to Crelle \cite[{\bf
I}:~pp.~259--260]{jacobi}, \ie, before appearance of the
\emph{Fun\-da\-men\-ta Nova}.} to obtain that series and observed that
their coefficients resulted in interesting dynamical systems. In
Sect.~\ref{S3} we construct the canonical power expansions for
$\theta$-functions in a (fast computable) form that has the simplest
structure and is maximally effective from the analytic point of view.

\subsubsection{Differential equations}

In sects.~\ref{S4}--\ref{S7} and \ref{S9} we expound the main material
of the work. Namely, dynamical systems satisfied by $\theta$-series,
their integrability condition, non-canonical extension of the canonical
$\theta$-series, and modular integrability of $\vartheta$-constants. We
shall see that not only do elliptic functions satisfy ordinary
differential equations (\odes), but $\theta$-functions themselves also
satisfy certain \odes. These \odes\ are of interest in their own rights
if only because, it is more logical to consider \emph{\odes\ proper for
the $\theta$-functions as basic equations} (Sect.~\ref{S4}) rather than
\odes\ for elliptic functions. Moreover, such a viewpoint is natural in
a more general pattern since elliptic functions are a subclass of
Abelian elliptic integrals and the latter are expressible in terms of
theta-functions. In Sect.~\ref{S9} we shall also see that solutions of
introduced \odes\ have a remarkable consequence, namely, an exponential
quadratic extension with additional parameters. Under certain
parameters this non-canonical extension coincides with the case of
canonical $\theta$-series.  There is yet another point worth noting.
Differential relations between $\theta$-functions are sometimes present
in old literature \cite{baruch,weber,tannery} but they are regarded
there just as differential identities. However, the principal point is
a \emph{differential closedness of the finitely many basic
$\theta$-objects} and, together with $\theta$-functions, the
theta-derivative $\Dtheta$ should play independent part in the theory.
Upon introducing this object, analytic and differential manipulations
by theta-functions are not in essence distinct from that by elliptic
ones or even elementary functions like sine-, cosine-functions.

\subsubsection{Algebraic integrability and Hamiltonicity}

One further remarkable property of the systems mentioned above is the
fact that the known basic polynomial theta-identities between canonical
$\theta$-series are nothing but the specific values of algebraic
integrals of the systems (Sect.~\ref{S7.1}). Clearly, we may take these
integrals as the Hamilton functions for these \odes\  and the
equations, being regarded as dynamical systems, thus possess the
remarkable property of being Hamiltonian. In Sect~\ref{S9.2} we shall
exemplify a particular case along these lines. Such treatments have a
large number of applications and intimate connection with the theory of
integrable systems \cite{belokolos}. A complete Hamiltonian
description, along with quantization, will be, however, the subject
matter of a separate work but the theta-constant case is discussed at
greater length in \cite{br3}. In this work we underlined (with some
counterexamples) the need to distinguish differential identities and
defining \odes. Algebraic integrability of the '$\theta$-\odes' is also
used in \cite{br4} for a new treatment of the finite-gap spectral
problems.

\subsubsection{Modularity and integrability conditions}

Yet another consequence of the `differential viewpoint' is an automatic
appearance of the `modular objects' (Sect.~\ref{S6}). Differential
properties of the `modular part' of Jacobi's functions are more
transcendental and closely related to the classical theory of linear
\odes\ with infinite groups of Fuchsian monodromies. Moreover, in the
early 1990's M.~Ablowitz \& S.~Chakravarty with coauthors
\cite{ablowitz0,ablowitz,chakr,tah} observed the deep linkage between
this theory and complete integrable equations. In the last two decades,
in works of Harnad \cite{harnad}, McKay \cite{harnad2}, Ablowitz et all
\cite[pp.~573--589]{conte}, \cite{ablowitz2}, Ohyama \cite{ohyama},
Hitchin \cite{hitchin}, this field was substantially advanced and found
nice applications known as monopoles \cite{hitchin2},
Chazy--Picard--Fuch's equations \cite{tah}, cosmological metrics of Tod
\cite{tod}, Hitchin \cite{hitchin} etc. In Sect.~\ref{S7.1} we shall
give further explanations and see that `modular integrability' of the
classical $\vartheta$-constants has the following characterization. It
constitutes a compatibility condition of the linear heat equation
$4\pi\,\ri\,\theta_\tau=\theta_{\mathit{zz}}$ and, on the other hand,
the quadrature integrable nonlinear `$z$-equations' for the functions
$\theta(z|\tau)$ and $\theta'(z|\tau)$.

It may be also mentioned here that proposed `differential technique'
may be applied equally well to the cases of multidimensional
$\Theta$-functions when the latter split into a decomposition of the
1-dimensional, \ie, Jacobi's $\theta$-functions. It is known that such
cases correspond to jacobians of algebraic curves covering the elliptic
tori \cite{belokolos}. The problems, wherein such a kind curves arise,
are very nontrivial in the one part, and are completely solvable as the
pure elliptic case in the other part.

\subsubsection{Modular inversion problem}

Complete form of the `differential theory'  requires a closed form
solution to the Weierstrassian elliptic modular inversion problem.
Strange though it may seem, but its formula realization does not appear
in the literature and, in Sect.~\ref{S8}, we shall give that solution
and exhibit some consequences and applications thereof.

Content of the next section is a matter of common knowledge and we
present it here to fix notation and terminology. In the subsequent
sections we do not touch upon the known results and simply use them
(accompanied with references) where they are required.

\section{Definitions and notation}

\subsection{The Jacobi functions\label{S2.1}}

The four theta-functions $\theta_{1,2,3,4}$  and their
$\theta_{\alpha\beta}$-equivalents are defined by the  canonical series
\begin{alignat*}{3}
-\theta_{11}(z|\tau)\equiv \theta_1(z|\tau)&={}& -\ri\,
\re_{\strut}^{\s{\frac14}\pi\ri\tau}
&\sideset{}{_k}\sum\limits_{-\infty}^{+\infty}\! (-1)^k\,
\re^{(k^2+k)\pi\ri\,\tau}_{\mathstrut}\, \re^{(2k+1)\pi \ri\,
z}_{\mathstrut}\\
&=&2\,\re_{\strut}^{\s{\frac14}\pi\ri\tau}&
\sideset{}{_k}\sum_{0}^\infty
(-1)^k\re_{\strut}^{(k^2+k)\pi\ri\tau}\sin(2k+1)\pi\,z\,,\\
\theta_{10}(z|\tau)\equiv \theta_2(z|\tau)&=&
\re_{\strut}^{\s{\frac14}\pi\ri\tau}
&\sideset{}{_k}\sum\limits_{-\infty}^{+\infty}\!
\re^{(k^2+k)\pi\ri\,\tau}_{\mathstrut}\, \re^{(2k+1)\pi
\ri\, z}_{\mathstrut}\\
&=&2\,\re_{\strut}^{\s{\frac14}\pi\ri\tau}&
\sideset{}{_k}\sum_{0}^\infty
\re_{\strut}^{(k^2+k)\pi\ri\tau}\cos(2k+1)\pi\,z\,,\\
\theta_{00}(z|\tau)\equiv \theta_3(z|\tau)&=&
&\sideset{}{_k}\sum\limits_{-\infty}^{+\infty}
\!\re^{k^2\pi\ri\,\tau}\,\re^{2k\pi\ri\,z}\\
&=&1+ 2& \sideset{}{_k}\sum_{1}^\infty
\re_{\strut}^{k^2\pi\ri\tau}\cos2k\pi\,z\,,\\
\theta_{01}(z|\tau)\equiv \theta_4(z|\tau)&=&
&\sideset{}{_k}\sum\limits_{-\infty}^{+\infty}\! (-1)^k\,
\re^{k^2\pi\ri\,\tau}\, \re^{2k\pi \ri\, z}\\
&=&1+ 2& \sideset{}{_k}\sum_{1}^\infty
(-1)^k\re_{\strut}^{k^2\pi\ri\tau}\cos2k\pi\,z\,,
\end{alignat*}
where, to avoid further confusion with Weierstrass' branch points
$e$'s, we use notation $\re^z$ for exponent of $z$. We shall use also
the shorthand form $\theta_k\DEF \theta_k(z|\tau)$. Values of
\mbox{$\theta$-functions} under $z=0$ (Thetanullwerthe \cite{jacobi})
are called the theta-constants. Because each of the variables $z$ and
$\tau$ has an `independent theory' we introduce a separate notation for
the nullwerthe: $\vartheta_k\DEF\vartheta_k(\tau)=\theta_k(0|\tau)$.
Their series representations follow from the $\theta$-series above:
\begin{equation}\label{consts}
\vartheta_2(\tau)\DEF
\sideset{}{_k}\sum_{-\infty}^\infty\!
\re^{\left(k+\s{\frac12}\right)^2\pi\ri\tau}_{\mathstrut},\qquad
\vartheta_3(\tau)\DEF
\sideset{}{_k}\sum_{-\infty}^\infty\!
\re^{k^2\pi\ri\tau}_{\mathstrut},\qquad
\vartheta_4(\tau)\DEF
\sideset{}{_k}\sum_{-\infty}^\infty\!
(-1)^k\re^{k^2\pi\ri\tau}_{\mathstrut}\,.
\end{equation}
Convergency of the series implies that $\tau$ belongs to the upper
half-plane: $\tau\in\Hp$, that is $\boldsymbol\Im(\tau)>0$. For
typographical convenience we adopt two pieces of notation for
$\theta$-functions with characteristics $\AB{\alpha}{\beta}$ (Hermite
(1858)):
\begin{equation}\label{hermite}
\theta\AB{\alpha}{\beta}(z|\tau)\equiv \theta_{\alpha\beta}(z|\tau)=
\sideset{}{_k}\sum\limits_{-\infty}^{+\infty}\! \re^{\pi\ri
\left(\!k+\frac\alpha2\!\right)^{\!2}\tau+
2\pi\ri\left(\!k+\frac\alpha2\!\right)\!
\left(\!z+\frac{\smash{\beta}}2\!\right)\! }_{\mathstrut}\,.
\end{equation}
Let $n$, $m$ be arbitrary integers: $n,m=0,\pm1,\pm2,\ldots\,$. We
consider only integral characteristics and hence, by virtue of formula
\begin{equation}\label{*}
\theta\AB{\alpha+2m}{\beta+2n}=(-1)^{\alpha\,
n}\cdot\theta\AB{\alpha}{\beta}\,,
\end{equation}
functions $\theta_{\alpha\beta}$ always reduce to
$\pm\theta_{1,2,3,4}$. When adding a half-period,
$\theta$-characteristics undergo a shift:
\begin{equation}\label{shift}
\theta\AB{\alpha}{\beta} \!\!\left({z+\Mfrac{n}{2}+\Mfrac{m}{2}\,
\tau}\Big|\tau\right)=(-\ri)^{(\beta+n)m}\:
\theta\AB{\alpha+m}{\beta+n}(z|\tau) \!
\cdot\re^{\sm\s{\frac14}\pi\ri
\,m(4z+m\tau)}_{\mathstrut}\,.
\end{equation}
Two-fold shifts by half-periods yield the law of transformation of
$\theta$-function into itself:
$$
\theta_{\alpha\beta} (z+n+m\,\tau|\tau)=
(-1)^{n\alpha-m\beta}\,\theta_{\alpha\beta} (z|\tau)\cdot
\re^{\sm\pi\ri\,m(2z+m\tau)}\,.
$$
Value of any $\theta$-function at any half-period is a certain
$\vartheta$-constant multiplied by the exponential factor:
$$
\theta\AB{\alpha}{\beta} \!\!\left({\Mfrac{n}{2}+\Mfrac
m2\,\tau}\Big|\tau\right)=(-\ri)^{(\beta+n)m}\:
\vartheta\AB{\alpha+m}{{\beta+n}}(\tau)
\cdot\re^{\sm\s{\frac14}\pi\,\ri\,m^2\tau}_{\mathstrut}\,.
$$
In the present work we use the `$\tau$-representation' for
$\vartheta,\theta$-functions. Transition to frequently used
`$q$-representation' $\big(q=\re^{\pi\ri\tau}_{}\big)$ is performed by
the formula $\partial_q=\pi\,\ri\,q\,\partial_\tau$.

We supplement the set of functions $\{\theta_k\}$ with the derivative
$\partial_z\theta_1(z|\tau)$ and consider it as a fifth independent
object:
\begin{equation}\label{der}
\begin{alignedat}{3}
\partial_z\theta_1(z|\tau)\FED\Dtheta(z|\tau)&={}&
\pi\, \re_{\strut}^{\s{\frac14}\pi\ri\tau}
&\sideset{}{_k}\sum\limits_{-\infty}^{+\infty}\! (-1)^k\,(2\,k+1)\,
\re^{(k^2+k)\pi\ri\,\tau}_{\mathstrut}\, \re^{(2k+1)\pi
\ri\, z}_{\mathstrut}\\
&=&2\,\pi\,\re_{\strut}^{\s{\frac14}\pi\ri\tau}&
\sideset{}{_k}\sum_{0}^\infty
\,(-1)^k(2\,k+1)\,\re_{\strut}^{(k^2+k)\pi\ri\tau}\cos(2k+1)\pi\,z\,.
\end{alignedat}
\end{equation}
It is interesting to note that the very object and its $z$-dependence
have an intriguing  correlation with experimental data in purely
physical considerations \cite{scott}.

\subsection{The Weierstrass functions\label{S2.2}}

We use the conventional Weierstrassian notation \cite{we2}
$$
\begin{aligned}
\sigma(z|\om,\om')&=\sigma(z;\g2,\g3)\,,\qquad&
\zeta(z|\om,\om')&=\zeta(z;\g2,\g3)\,,\\
\wp(z|\om,\om')&=\wp(z;\g2,\g3)\,,&
\wp'(z|\om,\om')&=\wp'(z;\g2,\g3)\,.
\end{aligned}
$$
Invariants $(\g2,\,\g3)$ are functions of periods $(2\om,2\om')$ (and
vice versa) and modulus \mbox{$\tau=\om'\!/\om$}. They are defined by
the well-known Weierstrass--Eisenstein series \cite{we,eisenstein,weil}
which are, however, entirely unsuited for numeric computations.
Hurwitz, in his dissertation \cite[p.~547]{hurwitz}, found a nice
transition to the Lambert series \cite{apostol}
\begin{equation}\label{star}
\begin{split}
\g2(\tau) &=20\,\pi^4\,\mbig[9]\{\frac{1}{240} +
\sideset{}{_k}\sum_{1}^\infty
\frac{k^3\,\re^{2k\pi\ri\,\tau}}{1-\re^{2k\pi\ri\,\tau}}
\mbig[9]\},\\
\g3(\tau) &= \;\frac73\,\pi^6\, \mbig[9]\{\frac{1}{504} -
\sideset{}{_k}\sum_{1}^\infty
\frac{k^5\,\re^{2k\pi\ri\,\tau}}{1-\re^{2k\pi\ri\,\tau}} \mbig[9]\}\,,
\end{split}
\end{equation}
which are used in theories, have applications, and are most effective
in computations.

Determination of periods $(2\om,2\om')$ by coefficients  $(a,b)$ of
elliptic curve in Weierstrassian form $w^2=4\,z^3-a\,z-b$ is know as
the elliptic modular inversion problem. Its solution involves the
transcendental equation $J(\tau)=A$, where $J(\tau)$ is the classical
modular function of Klein \cite{klein,hurwitz2,a}. Modular inversion is
then realized by the scheme
\begin{equation}\label{Klein}
(a,b) \quad\dashrightarrow\quad
J(\tau)=\frac{a^3}{a^3-27\,b^2} \quad\dashrightarrow\quad\om=\pm
\sqrt{\frac ab\frac{\g3(\tau)}{\g2(\tau)}}\quad
\dashrightarrow\quad\om'=\tau\,\om\,.
\end{equation}
The degenerated cases---lemniscatic $(b=0)$ and equi-anharmonic $(a=0)$
ones---require  separate formulae. In both of these cases there exist
exact solutions. The lemniscatic solution $\om_{\s{\textsc{l}}}$ was
found by Gauss. In our notation it is as follows
$$
\om_{\s{\textsc{l}}}=
\!\!\sqrt[\sm4]{8\,a\,}\,\pi\cdot\vartheta_4^2(2\,\ri)\,,\qquad
\om'=\ri\,\om_{\s{\textsc{l}}}\,.
$$
See works by Todd \cite{todd} and Levin \cite{levin} for exhaustive
information and voluminous bibliography on the lemniscate. Exact
solution $\om_{\s{\textsc{e}}}$ to the equi-anharmonic case we display
here seems to be new:
$$
\om_{\s{\textsc{e}}}=\!\!
\sqrt[\leftroot{-2}\sm12]{\!-27\,b^2\,}\,\pi\cdot\ded^2(\varrho)\,,
\qquad \om'=\varrho\,\om_{\s{\textsc{e}}}\,,
\quad\varrho\DEF-\frac{1}{2}\big(1-\ri\,\sqrt{3}\big)\,,
$$
where $\ded$ is the Dedekind function (see Sect.~\ref{S2.3} for
definition). The arbitrary branches of the $\surd$-roots are allowed in
the previous formulae.

By virtue of homogeneity relations, say $\alpha^2\wp(\alpha\,
z|\alpha\,\om,\alpha\,\om')=\wp(z|\om,\om')$, the couple of
half-periods $(\om,\om')$ or invariants $(\g2,\g3)$ can be replaced by
one quantity, \ie, modulus $\tau=\om'\!/\om$. We denote corresponding
functions as follows:
$$
\begin{aligned}
\sigma(z|\tau)&\DEF\sigma(z|1,\tau),\qquad&
\zeta(z|\tau)&\DEF\zeta(z|1,\tau),\\
\wp(z|\tau)&\DEF\wp(z|1,\tau),&\wp'(z|\tau)&\DEF\wp'(z|1,\tau)\,.
\end{aligned}
$$
Weierstrassian  $\eta$-function is defined by the formula
$\eta(\tau)\DEF\zeta(1|1,\tau)$ and its series representation reads
\begin{equation}\label{eta}
\eta(\tau)=2\,\pi^2\,\mbig[9]\{\frac{1}{24}- \sideset{}{_k}
\sum_{1}^\infty\frac{\re^{2k\pi\ri\,\tau}}
{(1-\re^{2k\pi\ri\,\tau})^2}\mbig[9]\}\,.
\end{equation}

Modular transformations in the elliptic/modular theory are of not only
theoretical interest since  value of the modulus $\tau$ strongly
affects convergence of the series. Moving $\tau$ into  fundamental
domain of the modular group
$$
\tau\mapsto\frac{a\,\tau+b}{c\,\tau+d}\,,\qquad
\Big(\,\begin{matrix}a&\!\!b\\c&\!\!d
\end{matrix}\,
\Big)\in\mathrm{PSL}_2(\mathbb{Z})\FED\boldsymbol{\Gamma}(1)
$$
(this process  is easily algorithmizable), one obtains values of $\tau$
having the minimal imaginary part
$\boldsymbol\Im(\tau)=\frac12\sqrt{3}$. In such the `worst' point all
the series converge very fast. For example modular property of the
$\eta$-series is as follows
$$
\eta\Big(\Mfrac{a\,\tau+b}{c\,\tau+d}\Big)
=(c\tau+d)^2\,\eta(\tau)-\frac{\ri}{2}\,\pi\,c\,(c\,\tau+d)\,,
$$
where $(a,b,c,d)$ are integers and, as usual, $a\,d-b\,c=1$.

The three Weierstrassian $\sigma$-functions are defined  through
Jacobian functions by the  expressions \cite{tannery,we2}
$$
\sigmalambda\!(z|\om,\om')= \frac{\theta_{\s{\lambda{+}1}}\!
\big(\frac{z}{2\om}\big|\frac{\om{\s'}}{\om}\big)}
{\vartheta_{\s{\lambda{+}1}}\! \big(\frac{\om{\s'}}{\om}\big)}\,
\re^{\eta(\om,\om')\,\frac{z^2}{2\om}}_{\mathstrut}\,,\qquad
\lambda=1,2,3\,.
$$
The Weierstrass $\sigma$-function, as function of $(z,\,\g2,\,\g3)$,
satisfies linear differential equations obtained by Weierstrass
himself:
\begin{equation*}
\begin{aligned}
z\,\frac{\partial\sigma}{\partial z}-
4\,\g2\,\frac{\partial\sigma}{\partial \g2}-
6\,\g3\,\frac{\partial\sigma}{\partial \g3}-\sigma=0\,, \\
\frac{\partial^2\sigma}{\partial z^2}-
12\,\g3\,\frac{\partial\sigma}{\partial \g2}-
\frac23\,\g2{}^{\hspace{-0.4em}2}\,\frac{\partial\sigma}{\partial \g3}
+\frac{1}{12}\,\g2\,z^2\,\sigma=0\,.
\end{aligned}
\end{equation*}
It immediately follows that there exists a recursive relation for
coefficients $C_k$ of the power series
\begin{equation}\label{sigma}
\sigma(z;\g2,\g3)= C_0\,z+C_1\,\frac{z^3}{3!}+\cdots=
z-\frac{\g2}{240}\,z^5-\frac{\g3}{840}\,z^7 +\cdots\,,
\end{equation}
where the standard normalization $\sigma(0)=0$, $\sigma'(0)=1$,
$\sigma''(0)=0$ has been adopted. The two classical recurrences are
known. The first one is due to Halphen \cite[{\bf I}:~p.~300]{halphen}:
\begin{equation}\label{D}
\mathfrak{}% do not remove this command. Otherwise the two
            % further \mathfrak's will not work
C_k= \Larger{\widehat{\Smaller{\boldsymbol{\mathfrak D}}}}\,
C_{k\sm1}-\frac16\,(k-1)(2\,k-1)\,\g2\,C_{k\sm2}\,,
\end{equation}
where
$$
\Larger{\widehat{\Smaller{\boldsymbol{\mathfrak D}}}}
\DEF 12\,\g3\,\frac{\partial}{\partial \g2}+
\frac23\,\g2{}^{\hspace{-0.4em}2}\,\frac{\partial}{\partial \g3}\,,
$$
but in different notation it was written down by Weierstrass \cite[{\bf
V}:~p.~49]{we} and even by Jacobi (see Sect.~\ref{S5}). The second
recurrence was obtained by Weierstrass:
\begin{equation}\label{sigmaW}
\sigma(z;\g2,\g3)=\mbox{\large$\ds\sum_{m,\,n=0}^\infty$}
\,A_{m,\,n} \left(\!{\frac{\g2}{2}}\!\right)^{\!m} \!\big(
2\g3\big)^n {\frac{z^{4m+6n+1}} {(4m{+}6n{+}1)!}}\,,
\end{equation}
\begin{equation}\label{Wmn}
\begin{aligned}
A_{m,\,n}&= \frac{16}{3}\,(n+1)\,A_{m-2,\,n+1}
+3\,(m+1)\,A_{m+1,\,n-1}\\
&\==-\frac13\,(2m+3n-1)(4m+6\,n-1)\, A_{m-1,\,n}\,,
\end{aligned}
\end{equation}
where $A_{0,0}=1$ and $A_{m,n}=0$ under $n,m<0$. Other recurrences are
also known \cite{eilbeck}. Among all the recurrences the Weierstrassian
one is least expendable because it contains only multiplication of
integers and  coefficients $C_k$ have already been collected in
parameters. It is interesting to remark that Weierstrass proves
separately the fact that $A_{m,n}$ have integral values \cite[{\bf
V}:~p.~50]{we}. We shall be guided by the same motivation when deriving
the power series for Jacobi's $\theta$-functions in Sect.~\ref{S3}.

\subsection{Dedekind's function\label{S2.3}}

Since the standard notations for Weierstrassian  $\eta$-function and
Dedekind's one coincide, we  use for the latter the symbol
$\ded(\tau)$:
$$
\ded(\tau)= \re_{\strut}^{\frac{\pi\ri}{12}\,\tau}\,
{\sideset{}{_k}\prod\limits_{1}^\infty}
\big(1-\re^{2k\pi\ri\,\tau}\big)=
\re_{\strut}^{\frac{\pi\ri}{12}\,\tau}
\sideset{}{_k}\sum\limits_{-\infty}^{+\infty}
\!(-1)^k\,\re^{(3k^2+k)\pi\ri\,\tau}\qquad \big(\mbox{Euler
(1748)}\big)\,.
$$
Dedekind's function is connected with the Jacobi--Weierstrass ones
through the differential and algebraic relations \cite{rankin,weber}:
\begin{equation}\label{ded}
\frac{1}{\ded}\,\frac{d\ded}{d\tau}=\frac{\ri}{\pi}\,\eta\,,\qquad
2\,\ded^3=\vartheta_2\,\vartheta_3\,\vartheta_4\,.
\end{equation}

\section{Canonical power $\theta$-series\label{S3}}

\noindent
Before proceeding to the  $\theta$-series we need some
preparatory material on series for Weierstrassian $\sigma$'s and
graphical illustration to the recurrence $A_{m,n}$; see
fig.~\ref{fig1}.
\begin{figure}[htbp]\label{Amn}
\centering
\includegraphics[width=6 cm]{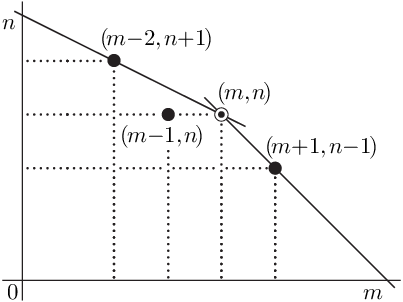}
\caption{\label{fig1}The Weierstrass recurrence \eqref{Wmn}.}
\end{figure}
This figure and formulas \eqref{sigmaW}--\eqref{Wmn}  mean that
computation of the point $(m,n)$ involves computations of all the
interior points of delineated quadrangle.

Throughout the paper we use the symbol $[n]$ for integer part of the
number $n$. Then one can easily show that Weierstrassian series
\eqref{sigmaW} simplifies into the following expression explicitly
collected in variable $z$:
\begin{equation}\label{wei}
\sigma(z;\g2,\g3)= \mbox{\large$\ds\sideset{}{_k}
\sum\limits_{0}^\infty$}\,
\Bigg\{ \sideset{}{_\nu}\sum_{\s{[k/3]}}^{\s{k/2}}
\!2^{2k\sm5\nu} A_{3\nu\sm k,\,k\sm2\nu}\,
\cdot \g2{}^{\hspace{-0.4em}3\nu\sm k}\,
\g3{}^{\hspace{-0.4em}k\sm2\nu}\Bigg\} \frac{z^{2k+1}}{(2k{+}1)!}\ .
\end{equation}
Analogous series exist for all the $\sigma$-functions but their form
depends on which parameters $\g{2,3}$ or $\elambda$ are chosen as the
basic ones (see Remark~\ref{R1} further below).

\subsection{Halphen's operator and  series for $\sigmalambda$}

Let us denote $\elambda\DEF \wp(\om_{\!\s\lambda}^{}|\om,\om')$.

\begin{lemma}\label{L1}
Power series for functions $\sigmalambda$ are given by the expression
$$
\sigmalambda\!(z;\elambda,\g2)=
\mbox{\large$\ds\sideset{}{_k}\sum_{0}^\infty$}\, \Bigg\{
\sideset{}{_\nu}\sum_{\s
0}^{\s k/2}\,2^{\sm\nu}\,\B_{k\sm2\nu,\,\nu}\cdot
\pow{e}{\s\lambda}{k\sm2\nu}\g2{}^{\hspace{-0.4em}\nu} \Bigg\}\,
\frac{z^{2k}}{(2k)!}
$$
with the  integral recurrence\/$:$
\begin{equation*}
\begin{aligned}
\B_{m,n}&=24\,(n+1)\,\B_{m-3,\,n+1}+
(4\,m-12\,n-5)\,\B_{m-1,\,n}\\
&\quad-\frac43\,(m+1)\,\B_{m+1,\,n-1}-
\frac13\,(m+2\,n-1)(2\,m+4\,n-3)\,\B_{m,\,n-1}\,, \\
\B_{\s0,\s0}&=1\quad\text{and}\quad\B_{m,\,n}=0 \text{\ \ if\ \ }
m,n<0\,.
\end{aligned}
\end{equation*}
\end{lemma}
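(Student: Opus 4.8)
The plan is to exhibit $\sigmalambda$, regarded as a function of the three variables $(z,\elambda,g_2^{})$, as a solution of two linear partial differential equations of exactly the same type as the two Weierstrass equations for $\sigma$ recalled above, and then to read off the recurrence by matching power-series coefficients. The first of these is the Euler (scaling) relation
\[
z\,\partial_z\sigmalambda-2\,\elambda\,\partial_{\elambda}\sigmalambda-4\,g_2^{}\,\partial_{g_2^{}}\sigmalambda=0,
\]
which is nothing but the weight-$0$ homogeneity $\sigmalambda(\alpha z;\alpha^{\smin2}\elambda,\alpha^{\smin4}g_2^{})=\sigmalambda(z;\elambda,g_2^{})$ implied by $\alpha^2\wp(\alpha z|\alpha\om,\alpha\omp)=\wp(z|\om,\omp)$; this relation is in fact automatically built into the double-indexed shape of the ansatz, and it will be used below only to tidy up the second equation. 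All the arithmetic content sits in a second-order equation, which I would derive from the theta-representation $\sigmalambda(z|\tau)=\vartheta_{\sss\lambda+1}^{\smin1}(\tau)\,\theta_{\sss\lambda+1}(\tfrac z2|\tau)\,\re^{\eta(\tau)z^2/2}$ recalled in Sect.~\ref{S2.2}, together with the heat equation $\theta_{zz}=4\pi\ri\,\theta_\tau$ (which, on the argument $\tfrac z2$, reads $\partial_z^2\theta_{\sss\lambda+1}(\tfrac z2|\tau)=\pi\ri\,\partial_\tau\theta_{\sss\lambda+1}(\tfrac z2|\tau)$).

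A direct computation of $\partial_z\sigmalambda,\ \partial_z^2\sigmalambda,\ \partial_\tau\sigmalambda$ from this representation converts the heat equation into
\[
\partial_z^2\sigmalambda-2\,\eta\,z\,\partial_z\sigmalambda-\pi\ri\,\partial_\tau\sigmalambda
=\Big(\eta+\pi\ri\,\tfrac{\dot\vartheta_{\sss\lambda+1}}{\vartheta_{\sss\lambda+1}}-\tfrac1{12}\,g_2^{}\,z^2\Big)\,\sigmalambda,
\]
where dots denote $d/d\tau$ and the coefficient of $z^2$ has already been simplified by the classical identity $\dot\eta=\tfrac\ri\pi\big(2\eta^2-\tfrac16 g_2^{}\big)$ (this, together with $\dot g_2^{}=\tfrac\ri\pi(8\eta g_2^{}-12 g_3^{})$ and $\dot g_3^{}=\tfrac\ri\pi(12\eta g_3^{}-\tfrac23 g_2^2)$, follows from differentiating \eqref{star}, \eqref{eta}, or from \eqref{ded}). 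From $(\sigmalambda/\sigma)^2=\wp-\elambda$ one extracts $\theta_{\sss\lambda+1}''(0|\tau)/\vartheta_{\sss\lambda+1}=-4(\elambda+\eta)$, hence (heat equation at $z=0$) $\dot\vartheta_{\sss\lambda+1}/\vartheta_{\sss\lambda+1}=\tfrac\ri\pi(\elambda+\eta)$, so the bracket collapses to $-\big(\elambda+\tfrac1{12}g_2^{}z^2\big)$. Next I would change variables from $\tau$ to $(\elambda,g_2^{})$: differentiating the cubic $4\elambda^3-g_2^{}\elambda-g_3^{}=0$ in $\tau$ and substituting $\dot g_2^{},\dot g_3^{}$ gives $\dot\elambda=\tfrac\ri\pi\big(4\eta\elambda+\tfrac23 g_2^{}-4\elambda^2\big)$, whence $\partial_\tau\sigmalambda=\dot\elambda\,\partial_{\elambda}\sigmalambda+\dot g_2^{}\,\partial_{g_2^{}}\sigmalambda$. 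Inserting this, every term proportional to $\eta$ assembles into $-2\eta\big(z\,\partial_z\sigmalambda-2\elambda\,\partial_{\elambda}\sigmalambda-4g_2^{}\,\partial_{g_2^{}}\sigmalambda\big)$ and vanishes by the Euler relation, leaving the closed $\eta$-free equation
\[
\partial_z^2\sigmalambda+\big(\tfrac23 g_2^{}-4\elambda^2\big)\,\partial_{\elambda}\sigmalambda+\big(12\,g_2^{}\elambda-48\,\elambda^3\big)\,\partial_{g_2^{}}\sigmalambda+\big(\elambda+\tfrac1{12}g_2^{}z^2\big)\,\sigmalambda=0.
\]
(Equivalently, but with heavier computation, the same equation follows from $\sigmalambda=\re^{\smin\eta_{\sss\lambda}z}\,\sigma(z{+}\om_{\!\sss\lambda})/\sigma(\om_{\!\sss\lambda})$ and the two Weierstrass $\sigma$-equations, changing parameters through $g_3^{}=4\elambda^3-g_2^{}\elambda$.)

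Writing $\sigmalambda=\sum_{k\ge0}c_k(\elambda,g_2^{})\,z^{2k}/(2k)!$ and equating in the last display the coefficient of $z^{2k}/(2k)!$ yields the $z$-collapsed recurrence
\[
c_k=-\elambda\,c_{k\smin1}-\big(\tfrac23 g_2^{}-4\elambda^2\big)\partial_{\elambda}c_{k\smin1}-\big(12\,g_2^{}\elambda-48\,\elambda^3\big)\partial_{g_2^{}}c_{k\smin1}-\tfrac1{12}(2k{-}2)(2k{-}3)\,g_2^{}\,c_{k\smin2},
\]
with $c_{\smin1}\DEF0$ and $c_0=1$ fixed by the normalisation $\sigmalambda(0)=1$. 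Now substitute $c_k=\sum_\nu 2^{\smin\nu}\B_{k\smin2\nu,\nu}\,\elambda^{k\smin2\nu}g_2^\nu$ and let $\partial_{\elambda},\partial_{g_2^{}}$ act monomial-wise: reading off the coefficient of $\elambda^{m}g_2^{n}$ (so that $m+2n=k$, and in particular $2k-2=2(m+2n-1)$) one finds that the common factor $2^{\smin n}$ cancels and what survives is \emph{precisely} the stated four-term recurrence for $\B_{m,n}$. The initial datum is $\B_{\sss0,0}=1$, and $\B_{m,n}=0$ for $m<0$ or $n<0$ because the corresponding monomials never occur (which is consistent with the recurrence). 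This settles everything except the claim that the $\B_{m,n}$ are integers.

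Integrality is the only genuinely delicate point. It is not visible from the recurrence, whose coefficients $\tfrac43(m+1)$ and $\tfrac13(m+2n-1)(2m+4n-3)$ are non-integral, whereas $24(n+1)$ and $4m-12n-5$ are integers: since $(m+2n-1)(2m+4n-3)\equiv0\pmod3$ unless $m-n\equiv2\pmod3$, one needs $3\mid\B_{m+1,n-1}$ whenever $3\nmid(m+1)$ and $3\mid\B_{m,n-1}$ whenever $m-n\equiv2\pmod3$. I would therefore prove, by a single induction on $k=m+2n$, the conjunction of (i) $\B_{m,n}\in\mathbb Z$ and (ii) a list of congruences $\B_{m,n}\equiv0\pmod3$ valid on a suitable finite union of residue classes modulo $3$, the list chosen exactly so that clause (ii) furnishes the divisibilities needed to validate clause (i) at the next level and conversely. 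This is the device by which Weierstrass establishes integrality of the $A_{m,n}$ in \eqref{sigmaW} (see \cite[{\bf V}:~p.~50]{we}), and identifying the correct list of residue classes so that the induction closes is what I expect to be the main obstacle; the PDE derivation of the preceding paragraphs, though lengthy, is routine.
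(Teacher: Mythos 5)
Your proposal is correct and takes essentially the same route as the paper: the paper's proof simply invokes Halphen's equations \eqref{H} and reads the recurrence off by matching coefficients, and the closed $\eta$-free second-order equation you derive from the theta-representation and the heat equation is precisely \eqref{H}, after which your monomial-wise matching reproduces the stated recurrence for $\B_{m,n}$ exactly. On integrality the paper likewise only appeals to the Weierstrass-style induction with auxiliary congruences (``analogous to the Weierstrassian argument on p.~50 in \cite[{\bf V}]{we}''), so your sketch of that step is at the same level of detail as the paper's own proof.
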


\begin{proof}
Calculations are based on Halphen's equations satisfied by
$\sigmalambda$-functions \cite{halphen}:
\begin{equation}\label{H}
\begin{aligned}
z\,\frac{\partial\sigmalambda}{\partial z}-
2\,\elambda\,\frac{\partial\sigmalambda}{\partial\elambda}-
4\,\g2\,\frac{\partial\sigmalambda}{\partial
\g2}&=0\,,\\
\frac{\partial^2\sigmalambda}{\partial z^2}-
\mbig[7](4\,\pow{e}{\s\lambda}{2}
-\Mfrac23\,\g2\mbig[7])\!
\frac{\partial\sigmalambda}{\partial \elambda}-
12\,\big(4\,\pow{e}{\s\lambda}{3}-\g2\,\elambda\big)
\frac{\partial\sigmalambda}{\partial \g2}+
\mbig[7](\elambda+\Mfrac{1}{12}\,\g2\,z^2\mbig[7])\, \sigmalambda
&=0
\end{aligned}
\end{equation}
and proof of integrality of the recurrence $\B_{m,\,n}$ is analogous to
Weierstrassian argument on p.~50 in \cite[{\bf V}]{we}.
\end{proof}

We can also include into this recurrence the $\sigma$-function. In this
case  the analog of Halphen's equation \eqref{H} takes the form
\begin{equation}\label{epsilon}
\begin{aligned}
z\,\frac{\partial\Xi}{\partial z}-
2\,\elambda\,\frac{\partial\Xi}{\partial\elambda}-
4\,\g2\,\frac{\partial\Xi}{\partial \g2}
-(1-\varepsilon)\,\Xi&=0\,,\\
\frac{\partial^2\Xi}{\partial z^2}-
\mbig[7](4\,\pow{e}{\s\lambda}{2}-\Mfrac23\,\g2\mbig[7])\!
\frac{\partial\Xi}{\partial \elambda}-
12\,\big(4\,\pow{e}{\s\lambda}{3}-\g2\,\elambda\big)
\frac{\partial\Xi}{\partial \g2}+
\mbig[7](\varepsilon\,\elambda+\Mfrac{1}{12}\,\g2\,z^2\mbig[7])
\,\Xi&=0\,,
\end{aligned}
\end{equation}
where case $\Xi=\sigmalambda$ corresponds to $\varepsilon=1$ and
$\Xi=\sigma$ corresponds to $\varepsilon=0$  with arbitrary $\elambda$.
This is because the $\sigma$-function does not depends on permutation
of branch-points $\elambda$.

\begin{corollary}\label{C1}
Power series for all the Weierstrass $\sigma$-functions are determined
by the expression
\begin{equation}\label{universal}
\Xi(z;\elambda,\g2)=\mbox{\large$\ds
\sideset{}{_k}\sum_{0}^{\infty}$}\,
\Bigg\{
\sideset{}{_\nu}\sum_{\s0}^{\s k/2}\,2^{\sm\nu}\,
\B_{k\sm2\nu,\,\nu}^{{\s(}\varepsilon{\s)}}\cdot
\pow{e}{\s\lambda}{k\sm2\nu}\g2{}^{\hspace{-0.4em}\nu}
\Bigg\}\,\frac{z^{2k+1-\varepsilon}}
{(2k{+}1{-}\varepsilon)!}
\end{equation}
under the universal integral recurrence
\begin{equation*}
\begin{aligned}
\B_{m,n}^{{\s(}\varepsilon{\s)}}&=
24\,(n+1)\,\B_{m-3,\,n+1}^{{\s(}\varepsilon{\s)}}+
(4\,m-12\,n-4-\varepsilon)\,\B_{m-1,\,n}^
{{\s(}\varepsilon{\s)}}\\
&\quad -\frac43\,(m+1)\,\B_{m+1,\,n-1}^{{\s(}\varepsilon{\s)}}-
\frac13\, (m+2\,n-1)(2\,m+4\,n-1-2\,\varepsilon)\,\B_{m,\,n-1}
^{{\s(}\varepsilon{\s)}}\, .
\end{aligned}
\end{equation*}
\end{corollary}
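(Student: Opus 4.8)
The plan is to run the proof of Lemma~\ref{L1} in reverse, so to speak: rather than re-deriving everything, substitute the ansatz \eqref{universal} — with undetermined coefficients $\B^{(\varepsilon)}_{m,n}$, normalization $\B^{(\varepsilon)}_{0,0}=1$ and $\B^{(\varepsilon)}_{m,n}=0$ for $m<0$ or $n<0$ — into the unified pair \eqref{epsilon}, and read off the recurrence. We are entitled to use \eqref{epsilon}: for $\varepsilon=1$ it \emph{is} Halphen's system \eqref{H} with $\Xi=\sigmalambda$, while for $\varepsilon=0$ it is the two Weierstrass $\sigma$-equations rewritten in the variables $(\elambda,g_2^{})$ via $g_3^{}=4\,\elambda^3-g_2^{}\,\elambda$, with $\elambda$ entering only as an inert spectator parameter (this is the analytic content of ``$\sigma$ does not depend on the permutation of the branch points'', already remarked in the excerpt). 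Thus the whole corollary amounts to one computation, valid uniformly in $\varepsilon\in\{0,1\}$.

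First I would dispose of the first (Euler-type) equation of \eqref{epsilon}. Applying $z\,\partial_z-2\,\elambda\,\partial_\elambda-4\,g_2^{}\,\partial_{g_2^{}}-(1-\varepsilon)$ to the monomial $z^{2k+1-\varepsilon}\elambda^{k-2\nu}g_2^\nu$ produces the numerical factor $(2k{+}1{-}\varepsilon)-2(k{-}2\nu)-4\nu-(1{-}\varepsilon)=0$. Hence the first equation is satisfied for \emph{every} choice of the $\B^{(\varepsilon)}_{m,n}$: its only function is to fix the $z$-degree as $2k+1-\varepsilon$ and to restrict the admissible monomials to $\elambda^{k-2\nu}g_2^\nu$, $0\le\nu\le[k/2]$. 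Equivalently, every admissible monomial has one and the same weight under the scaling $z\mapsto\lambda z$, $\elambda\mapsto\lambda^{-2}\elambda$, $g_2^{}\mapsto\lambda^{-4}g_2^{}$; consequently it suffices to impose the second equation of \eqref{epsilon} weight by weight, i.e.\ monomial by monomial.

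The substantive step is the second equation. I would compute $\partial_z^2\Xi$ (which, because of the $z^{\,\cdot}/(\cdot)!$ normalization, lowers the index $k$ by one without disturbing the factorials), then $\partial_\elambda\Xi$ and $\partial_{g_2^{}}\Xi$, multiply them respectively by $z^2$, by $(4\,\elambda^2-\tfrac23 g_2^{})$, by $12\,(4\,\elambda^3-g_2^{}\elambda)$ and add the $(\varepsilon\,\elambda+\tfrac1{12}g_2^{}z^2)\,\Xi$ term, and re-index each double series so that its generic term is the fixed monomial $z^{2m+4n-1-\varepsilon}\,\elambda^{m}g_2^{n}$ (the one carried by $\B^{(\varepsilon)}_{m,n}$ after one application of $\partial_z^2$). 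Equating the total coefficient to zero and dividing through by the common factor $2^{-n}$ collapses everything to the stated recurrence: the $z^2\partial_z^2$ term yields the $\B^{(\varepsilon)}_{m,n}$ on the left; the $\elambda^3\partial_{g_2^{}}$ piece yields $24\,(n{+}1)\,\B^{(\varepsilon)}_{m-3,n+1}$; the $\elambda^2\partial_\elambda$ piece, the $g_2^{}\elambda\,\partial_{g_2^{}}$ piece and the $\varepsilon\,\elambda$ piece assemble into $(4m-12n-4-\varepsilon)\,\B^{(\varepsilon)}_{m-1,n}$; the $g_2^{}\partial_\elambda$ piece gives $-\tfrac43(m{+}1)\,\B^{(\varepsilon)}_{m+1,n-1}$ (the $2^{-n+1}$ weight accounting for the $\tfrac43$ instead of $\tfrac23$); and the $\tfrac1{12}g_2^{}z^2\,\Xi$ piece gives the last term, where shifting the factorial from $(2m{+}4n{-}3{-}\varepsilon)!$ up to $(2m{+}4n{-}1{-}\varepsilon)!$ supplies the quadratic factor $\tfrac13(m{+}2n{-}1)(2m{+}4n{-}1{-}2\varepsilon)$ for $\varepsilon\in\{0,1\}$. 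Specializing $\varepsilon=1$ then visibly returns the recurrence of Lemma~\ref{L1}, and $\varepsilon=0$ returns the recurrence governing \eqref{wei}.

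Two points deserve care, and I expect the only real friction to lie there. First, the bookkeeping: the four re-indexed sums must all be measured against a single reference monomial, and a sign or off-by-one slip in any of the four shifts $(m,n)\to(m{-}3,n{+}1),(m{-}1,n),(m{+}1,n{-}1),(m,n{-}1)$ would corrupt the recurrence; the homogeneity observation of the second paragraph keeps this under control by guaranteeing in advance which shifts can possibly occur. Second, \emph{integrality} of $\B^{(\varepsilon)}_{m,n}$: the recurrence carries denominators $3$, so this is not automatic. It is proved exactly as Weierstrass establishes the integrality of his $A_{m,n}$ on p.~50 of \cite[{\bf V}]{we} — induction on the weight $m+2n$, tracking divisibility by $3$ through the recurrence — and the argument is uniform in $\varepsilon\in\{0,1\}$; alternatively one simply invokes Lemma~\ref{L1} for $\varepsilon=1$ and the classical integrality of the Weierstrassian $A_{m,n}$ for $\varepsilon=0$. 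The boundary data $\B^{(\varepsilon)}_{0,0}=1$ and $\B^{(\varepsilon)}_{m,n}=0$ for negative indices are forced by the normalizations $\sigma'(0)=1$, $\sigmalambda(0)=1$ and by the fact that $\Xi$ is entire with no monomials outside the admissible range.
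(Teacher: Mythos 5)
Your proposal is correct and follows essentially the paper's own route: the corollary is obtained exactly by substituting the ansatz \eqref{universal} into the unified Halphen system \eqref{epsilon} (the first equation being satisfied identically and only fixing the admissible monomials, the second yielding the five-term recurrence by coefficient matching), with integrality handled by the same Weierstrassian argument invoked in Lemma~\ref{L1}; your coefficient bookkeeping, including the factor $\tfrac43$ from the $2^{-n}$ weights and the identity $\tfrac16(2m{+}4n{-}2{-}\varepsilon)(2m{+}4n{-}1{-}\varepsilon)=\tfrac13(m{+}2n{-}1)(2m{+}4n{-}1{-}2\varepsilon)$ valid for $\varepsilon\in\{0,1\}$, checks out. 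The only cosmetic slip is calling the second-derivative term ``$z^2\partial_z^2$'' (it is just $\partial_z^2$ in \eqref{epsilon}), which does not affect the argument.
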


\begin{remark}\label{R1}
Weierstrass himself wrote out recurrences not for his
$\sigma$-functions but for  functions
$S_\lambda=\exp\big\{\frac12\elambda z^2\}\,\sigmalambda(z)$ with
parameters $\big(\elambda,\,\varepsilon_{\s\lambda}=
3\pow{e}{\s\lambda}{2}-\frac14\g2\big)$ \cite[{\bf
II}:~pp.~253--254]{we}. A possible explanation of this choice is that
functions $S_\lambda$ yield a four-term recurrence like $A_{m,\,n}$,
whereas our recurrences are the five-term ones. However, one can show
that recurrences for functions $\sigmalambda$ through parameters
$(\g2,\g3)$ do not exist. Nevertheless transition between pairs
$(\elambda,\g2)$ and $(\elambda,\emu)$ is one-to-one and therefore this
universal recurrence may be written out in any of these
`representations'. It will be completely symmetric in the
$(\elambda,\emu)$-representation.
\end{remark}

Another kind of formulae for coefficients of the series is the
$\vartheta$-constant one. It is a natural choice for the power
$\theta$-series. Indeed, the $\vartheta$-constant expressions for
branch points $\elambda$ are well known. In turn, $\vartheta$-constants
are related by the Jacobi identity
$\vartheta_3^4=\vartheta_2^4+\vartheta_4^4$. This allows us to pass
between the representations, choosing arbitrary pair. If formulae are
written  through the constants
$(\vartheta_{\!\alpha0},\vartheta_{\!0\beta})$, where
$(\alpha,\beta)\ne(0,0)$, we shall use the term
\emph{$(\alpha,\beta)$-representation}.

The foregoing Halphen equations and their other representations can be
written in form of one equation if we avail of  operator \eqref{D}. We
can derive, \eg, that
\begin{align*}
\Larger{\widehat{\Smaller{\boldsymbol{\mathfrak D}}}} &=
\mbig[7](4\,\pow{e}{\s\lambda}{2}-\Mfrac23 \g2\mbig[7] )\,
\frac{\partial}{\partial\elambda}+
12\,\big(4\,\pow{e}{\s\lambda}{3}-\g2\,
\elambda\big)\frac{\partial}{\partial\g2}\,.
\end{align*}
The $\vartheta$-constant form for
$\Larger{\widehat{\Smaller{\boldsymbol{\mathfrak D}}}}$-operator is
also easily calculated since all the parameters of the theory, \ie,
$\g2$, $\g3$, $\elambda$, and $\vartheta_{\alpha\beta}$ are related to
each other by the polynomial relations \cite{tannery}. For example
$(\vartheta_2,\vartheta_4)$-representation for
\Larger{\widehat{\Smaller{\boldsymbol{\mathfrak D}}}} reads
$$
\Larger{\widehat{\Smaller{\boldsymbol{\mathfrak D}}}}= \frac{\pi^2}{3}
\mbig(\vartheta_4^8+2\,\vartheta_2^4\,\vartheta_4^4\mbig)
\frac{\partial}{\partial(\vartheta_4^4)} -\frac{\pi^2}{3}
\mbig(\vartheta_2^8+2\,\vartheta_2^4\,\vartheta_4^4\mbig)
\frac{\partial}{\partial(\vartheta_2^4)}\;.
$$
Let us denote $ \spin{\alpha}\DEF(-1)^\alpha$. Then the general
$(\alpha,\beta)$-representation to the Halphen operator is given by the
formula
$$
\Larger{\widehat{\Smaller{\boldsymbol{\mathfrak D}}}}=
\frac{\pi^2}{3}\mbig(\langle\beta\rangle\,
\pow{\vartheta}{\!\alpha0}{8}+ 2\,\spin{\alpha}\,
\pow{\vartheta}{\!\alpha0}{4}
\,\pow{\vartheta}{0\beta}{4}\mbig)
\frac{\partial}{\partial(\pow{\vartheta}{\!\alpha0}{4})}
-\frac{\pi^2}{3}\mbig(\spin{\alpha}\,
\pow{\vartheta}{0\beta}{8}+ 2\,\spin{\beta}\,
\pow{\vartheta}{\!\alpha0}{4}\,
\pow{\vartheta}{0\beta}{4}\mbig)
\frac{\partial}{\partial(\pow{\vartheta}{0\beta}{4} )} \;.
$$
Define also the symbol $\varepsilon$ depending on parity of the
function $\theta_{\alpha\beta}$:
$$
\varepsilon\DEF\frac{\spin{\alpha\beta}+1}{2}\quad
\hence\quad\bigg\{
\begin{aligned}
&\varepsilon=0\quad \mbox{if \ }
\theta_{\!\alpha\beta}=\pm\,\theta_1\\
&\varepsilon=1\quad \mbox{if \ } \theta_{\!\alpha\beta}=\pm\,
\theta_{2,3,4}
\end{aligned}\;.
$$

\begin{lemma}\label{L2}
Halphen's equation \eqref{epsilon} for the functions
$\Xi=\{\sigma,\sigmalambda\}$ has the form
$$
\frac{\partial^2\,\Xi}{\partial z^2}-
\Larger{\widehat{\Smaller{\boldsymbol{\mathfrak D}}}}\, \Xi
+\Big\{\varepsilon\,\elambda(\vartheta)+\Mfrac{\pi^4}{12^2}
\big(\vartheta_2^8+\vartheta_2^4\,\vartheta_4^4+ \vartheta_4^8 \big)
z^2\Big\}\,\Xi=0
$$
and its general $(\alpha,\beta)$-representation is as follows
\begin{equation}\label{eq}
\frac{\partial^2\,\Xi}{\partial
z^2}-\Larger{\widehat{\Smaller{\boldsymbol{\mathfrak D}}}}\, \Xi
+\Big\{e_{\s{\gamma\delta}}(\vartheta) + \Mfrac{\pi^4}{12^2}\,
\mbig[4][
\pow{\vartheta}{\!\alpha0}{8}+\spin{\alpha{+}\beta}\,
\pow{\vartheta}{\!\alpha0}{4}\,
\pow{\vartheta}{0\beta}{4}+
\pow{\vartheta}{0\beta}{8} \mbig[4]]\, z^2 \Big\}\,\Xi=0\,.
\end{equation}
Here,  quantities
\begin{equation}\label{ee}
e_{\gamma\delta}^{}(\vartheta)= \frac{\pi^2}{12} \mbig[3](
\spin{\gamma}\,\pow{\vartheta}{0\delta}{4}-
\spin{\delta}\,\pow{\vartheta}{\gamma0}{4}\mbig[3])
\end{equation}
do not dependent on representation $(\alpha,\beta)$ and correspond to
functions $\sigma$, $\sigmalambda$ by the rules
$$
\sigma\leftrightarrow e_{\s{00}}^{}= 0\,,\qquad
\sigma_{\s1}^{}\leftrightarrow e_{\s{01}}^{}= e_{\s 1}\,,\qquad
\sigma_{\s2}^{}\leftrightarrow e_{\s{11}}=e_{\s 2}\,,\qquad
\sigma_{\s3}^{}\leftrightarrow e_{\s{10}}= e_{\s 3}\,.
$$
\end{lemma}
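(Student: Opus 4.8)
The plan is to recast \eqref{epsilon} by recognising in it the $\elambda$-form of the Halphen operator established just above the statement, and then to translate the two remaining non-derivative coefficients into $\vartheta$-constants. First I would observe that the second equation of \eqref{epsilon} is already $\partial_z^2\Xi-\widehat{\boldsymbol{\mathfrak D}}\,\Xi+\big(\varepsilon\,\elambda+\tfrac1{12}\,g_2^{}\,z^2\big)\Xi=0$ verbatim, because its first-order part $\big(4\,\elambda^2-\tfrac23\,g_2^{}\big)\partial_{\elambda}+12\,\big(4\,\elambda^3-g_2^{}\,\elambda\big)\partial_{g_2^{}}$ is exactly the $\elambda$-representation of $\widehat{\boldsymbol{\mathfrak D}}$ computed before the lemma; switching to the general $(\alpha,\beta)$-representation of $\widehat{\boldsymbol{\mathfrak D}}$ (also displayed above) costs nothing but the chain rule. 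So only the coefficients $\tfrac1{12}g_2^{}$ and $\varepsilon\,\elambda$ have to be rewritten.

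For $g_2^{}$ I would start from the classical branch-point differences in the normalization $\om=1$, namely $e_1-e_2=\tfrac{\pi^2}{4}\vartheta_4^4$, $e_2-e_3=\tfrac{\pi^2}{4}\vartheta_2^4$, $e_1-e_3=\tfrac{\pi^2}{4}\vartheta_3^4$ --- these follow from the $\sigmalambda$-representations of Sect.~\ref{S2.2} together with $\elambda=\wp(\om_\lambda)$, or may simply be quoted from \cite{tannery,we2} --- combined with $e_1+e_2+e_3=0$. Since $g_2^{}=2(e_1^2+e_2^2+e_3^2)=\tfrac23\big[(e_1-e_2)^2+(e_2-e_3)^2+(e_3-e_1)^2\big]$, this gives $g_2^{}=\tfrac{\pi^4}{24}\big(\vartheta_2^8+\vartheta_3^8+\vartheta_4^8\big)$, and one application of Jacobi's identity $\vartheta_3^4=\vartheta_2^4+\vartheta_4^4$ turns the right side into $\tfrac{\pi^4}{12}\big(\vartheta_2^8+\vartheta_2^4\vartheta_4^4+\vartheta_4^8\big)$, whence $\tfrac1{12}g_2^{}=\tfrac{\pi^4}{12^2}\big(\vartheta_2^8+\vartheta_2^4\vartheta_4^4+\vartheta_4^8\big)$ --- the coefficient in the first displayed equation of the lemma. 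The same branch-point formulas together with $e_1+e_2+e_3=0$ reproduce \eqref{ee} for the pairs $(\gamma,\delta)\in\{(0,0),(0,1),(1,1),(1,0)\}$ and the correspondence $\sigma\leftrightarrow e_{00}^{}=0$, $\sigmalambda\leftrightarrow e_{\gamma\delta}^{}$; this convention absorbs the parameter $\varepsilon$ (for $\Xi=\sigma$ the term simply vanishes), which is why no explicit $\varepsilon$ stands in front of $e_{\gamma\delta}^{}(\vartheta)$ in \eqref{eq}.

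For the arbitrary $(\alpha,\beta)$-representation I would use that $\vartheta_2^4,\vartheta_3^4,\vartheta_4^4$ are tied by the single relation $\vartheta_3^4=\vartheta_2^4+\vartheta_4^4$, so each admissible pair $\big(\vartheta_{\alpha0}^4,\vartheta_{0\beta}^4\big)$ with $(\alpha,\beta)\ne(0,0)$ determines the third constant linearly; substituting that linear expression into $\vartheta_2^8+\vartheta_2^4\vartheta_4^4+\vartheta_4^8$ and simplifying yields $\vartheta_{\alpha0}^8+\spin{\alpha+\beta}\,\vartheta_{\alpha0}^4\,\vartheta_{0\beta}^4+\vartheta_{0\beta}^8$, the sign $\spin{\alpha+\beta}$ being precisely what the reduction rule \eqref{*} forces. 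The same finite check shows that \eqref{ee}, hence the constant term $e_{\gamma\delta}^{}(\vartheta)$, is independent of the representation in which $\widehat{\boldsymbol{\mathfrak D}}$ is written, and that it does return the branch points $e_{1,2,3}$ under the stated correspondence. Collecting these substitutions in the collapsed equation above produces \eqref{eq}. I expect the only delicate point to be the bookkeeping of the parity signs $\spin{\alpha}$, $\spin{\beta}$, $\spin{\alpha+\beta}$ over the few characteristic pairs --- a purely mechanical but easily miswritten verification; everything else is direct substitution once the three classical $\vartheta$-formulas for the branch-point differences are at hand.
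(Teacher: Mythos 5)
Your proposal is correct and follows exactly the route the paper intends (and omits as ``routine''): recognize the first-order part of \eqref{epsilon} as the $\elambda$-representation of $\widehat{\boldsymbol{\mathfrak D}}$ displayed just before the lemma, then convert $\tfrac1{12}g_2^{}$ and $\varepsilon\,\elambda$ into $\vartheta$-constants via the classical branch-point formulas, $e_1+e_2+e_3=0$, and Jacobi's identity $\vartheta_3^4=\vartheta_2^4+\vartheta_4^4$. Your sign bookkeeping ($\spin{\alpha+\beta}$, the four pairs $(\gamma,\delta)$) checks out against \eqref{ee}, \eqref{g23}, and the $(\alpha,\beta)$-form of $g_2^{}$ given later in Sect.~\ref{S6}, so nothing is missing.
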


We say that a representation is a \emph{proper} one (or symmetric) if
$(\gamma,\delta)=(\alpha,\beta)$.

\begin{remark}[historical]\label{R2}
Nice recurrences on the plane (analogs of $A_{m,n}$) were obtained by
Weierstrass already in the 1840's. At the time he was using his old
notation \emph{Al} instead of Jacobi's  $\Theta$ and $H$ \cite[{\bf
I}]{we}. At about the same time Jacobi considered power series for his
theta-functions and introduced the important multiplier $\re^{Az^2}$
much as Weierstrass did this for his $\sigma$-function. See further
Remark~\ref{R4} in Sect.~\ref{S5} or lectures by Koenigsberger
\cite[{\bf II}:~pp.~79--81]{koenig}.
\end{remark}

\subsection{Power $\theta$-series}

The functions $\theta$ are fundamental objects in numerous theories.
For this reason we shall give representation for their power series in
a maximally simplified (canonical) form. Because of this, we do collect
all the parameters similarly to Weierstrassian recurrence $A_{m,\,n}$,
so only multiplications of integers remain. It is not difficult to see
that the series under question must be series with coefficients being
polynomials in variables $\eta(\tau)$, $\vartheta(\tau)$. This follows
from the obvious formulae
\begin{equation}\label{st}
\theta_1(z|\tau)=\pi\,\ded^3(\tau)
\cdot\re^{\sm2\eta(\tau)z^2}_{\mathstrut}\sigma(2\,z|\tau)\;,
\quad\;\;\; \theta_{\s\lambda}\!(z|\tau)=
\vartheta_{\s{\!\lambda}}\!(\tau)
\cdot\re^{\sm2\eta(\tau)z^2}_{\mathstrut}
\sigma_{\s{\!\lambda}\sm1}\!(2\,z|\tau)\,.
\end{equation}
This entails that all the formulae that follows are derivable with use
of various $\vartheta$-re\-pre\-sent\-ations to operator
$\Larger{\widehat{\Smaller{\boldsymbol{\mathfrak D}}}}$ in Halphen's
equations \eqref{H}, \eqref{epsilon}, and \eqref{eq} followed by
multiplying the result into the series for an exponent. Computations
are somewhat lengthy but routine and we therefore omit them entirely.

\begin{theorem}\label{T1}
Power series for the function
\begin{equation}\label{theta1}
\begin{aligned}
\theta_1(z|\tau)&=\sideset{}{_k}\sum_{0}^{\infty}\,C_k(\tau)\cdot
z^{2k+1}\\
&=2\,\pi\,\ded^3 \,\bigg\{ z-2\,\eta{\cdot} z^3+\Big(2\,\eta^2-
\frac{\pi^4}{180}
\big(\vartheta_2^8+\vartheta_2^4\,\vartheta_4^4+\vartheta_4^8\big)
\Big) \cdot z^5+\cdots\bigg\}
\end{aligned}
\end{equation}
is determined by the analytic expression
\begin{equation}\label{t1}
\begin{aligned}
\theta_1(z|\tau)&=\phantom{\ded^3}2\,\pi\,\,
\mbox{\large$\ds\sideset{}{_k}\sum_{0}^\infty$}
\,\frac{(4\pi\ri)^k}{(2k+1)!}\,\frac{d^k\ded^3}{d\tau^k}
\cdot z^{2k+1}\\
&=2\,\pi\,\ded^3\,\, \mbox{\large$\ds\sideset{}{_k}\sum_{0}^\infty$}
\,(-2)^k\,\Bigg\{ \sideset{}{_\nu}\sum_{0}^{k}
{\Big(}\!\!{-}\frac{\pi^2}{6}\Big)^\nu
\frac{\eta^{k-\nu}\,\boldsymbol{\mathcal{N}}_{\!\nu}(\vartheta)}
{(k-\nu)!\,(2\nu+1)!} \Bigg\} \,z^{2k+1}\;,
\end{aligned}
\end{equation}
where the $\vartheta$-polynomial
$$
\boldsymbol{\calN}_{\!\nu}(\vartheta)=
\sideset{}{_s}\sum_{0}^{\nu}
\left\{\!\!\!
\begin{array}{r}
\G_{\nu\sm s,\,s}\cdot\vartheta_4^{4s\mathstrut}\,
\vartheta_2^{4{\s(}\nu\sm s{\s)}}\\
(-1)^s\,\G_{\nu\sm s,\,s}\cdot
\vartheta_3^{4s\ds\mathstrut}\,
\vartheta_4^{4{\s(}\nu\sm s{\s)}\ds\mathstrut}\\
(-1)^s\,\G_{s,\,\nu\sm s}
\cdot\vartheta_3^{4s\ds\mathstrut}\,
\vartheta_2^{4{\s(}\nu\sm s{\s)}\ds\mathstrut}
\end{array}\!\!\!
\right\}
$$
is chosen $($in braces$)$ according to which $\vartheta$-constant
$(4,2)$-, $(3,4)$-, or $(3,2)$-repre\-sentation is taken. Here, the
integral recurrence $\G_{m,\,n}$ is defined  as follows
\begin{equation*}
\begin{aligned}
\G_{m,\,n}&=4\,(n-2\,m-1)\,\G_{m,\,n\sm1}-
4\,(m-2\,n-1)\,\G_{m\sm1,\,n}
\\
&\quad -2\,(m+n-1)(2\,m+2\,n-1)\big(\G_{m\sm2,\,n}+
\G_{m\sm1,\,n\sm1}+\G_{m,\,n\sm2}\big)\,,\\
\G_{\s0,\s0}&=1\quad\mbox{and}\quad\G_{m,\,n}=0\mbox{\ \ if\ \ }
m,n<0
\end{aligned}
\end{equation*}
and  has the symmetry property $\G_{m,n}=(-1)^{m+n}\,\G_{n,m}$.
\end{theorem}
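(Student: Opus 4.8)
The plan is to derive the two series of \eqref{t1} by separate arguments — the first from the heat equation, the second from Halphen's equation of Lemma~\ref{L2} combined with the exponential factor in \eqref{st} — and then to read off the recurrence $\G_{m,n}$ together with its symmetry.

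\emph{The first form.} First I would record that $\theta_1(z|\tau)$ solves the heat equation $\partial_z^2\theta_1=4\pi\ri\,\partial_\tau\theta_1$: each summand $\re^{(k^2+k)\pi\ri\tau+(2k+1)\pi\ri z}$ of the defining series is annihilated by $\partial_z^2-4\pi\ri\,\partial_\tau$ (equivalently, adjoining half-integer characteristics leaves the heat operator unchanged). Inserting $\theta_1=\sum_{k\ge0}C_k(\tau)z^{2k+1}$ and matching coefficients of $z^{2k-1}$ gives $(2k+2)(2k+3)\,C_{k+1}=4\pi\ri\,C_k'$, whence $C_k(\tau)=\frac{(4\pi\ri)^k}{(2k+1)!}\dfrac{d^kC_0}{d\tau^k}$. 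The seed $C_0=\Dtheta(0|\tau)=2\pi\ded^3$ comes from differentiating $\theta_1=\pi\ded^3\,\re^{-2\eta z^2}\sigma(2z|\tau)$ of \eqref{st} at $z=0$ and using $\sigma'(0)=1$. This is the first equality of \eqref{t1}.

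\emph{The second form.} Again from \eqref{st}, $\theta_1(z|\tau)=\pi\ded^3\,\re^{-2\eta z^2}\sigma(2z|\tau)$, so it remains to expand $\sigma(\cdot|\tau)$ in $z$ and multiply by the exponential, as in the remark preceding the theorem. For $\Xi=\sigma$ one has $\varepsilon=0$ and, by Lemma~\ref{L2}, $e_{00}=0$; in the $(\vartheta_2,\vartheta_4)$-representation equation \eqref{eq} becomes $\sigma_{zz}-\widehat{\boldsymbol{\mathfrak D}}\,\sigma+\frac{\pi^4}{144}\big(\vartheta_2^8+\vartheta_2^4\vartheta_4^4+\vartheta_4^8\big)z^2\sigma=0$ with $\widehat{\boldsymbol{\mathfrak D}}=\frac{\pi^2}{3}\big[(\vartheta_4^8+2\vartheta_2^4\vartheta_4^4)\partial_{\vartheta_4^4}-(\vartheta_2^8+2\vartheta_2^4\vartheta_4^4)\partial_{\vartheta_2^4}\big]$. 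Writing $\sigma(z;\vartheta)=\sum_{k\ge0}d_k(\vartheta)z^{2k+1}$, $d_0=1$, this turns into $(2k+2)(2k+3)\,d_{k+1}=\widehat{\boldsymbol{\mathfrak D}}\,d_k-\frac{\pi^4}{144}\big(\vartheta_2^8+\vartheta_2^4\vartheta_4^4+\vartheta_4^8\big)d_{k-1}$; a degree count ($\vartheta_2^4,\vartheta_4^4$ of degree one, $\widehat{\boldsymbol{\mathfrak D}}$ raising degree by one, $d_0$ of degree zero) makes each $d_k$ homogeneous of degree $k$ in $\vartheta_2^4,\vartheta_4^4$, so no negative indices arise. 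Substituting $d_k=\frac{(\pi^2/12)^k}{(2k+1)!}\boldsymbol{\mathcal{N}}_k(\vartheta)$ clears the denominators and yields $\boldsymbol{\mathcal{N}}_{k+1}=4\big[(\vartheta_4^8+2\vartheta_2^4\vartheta_4^4)\partial_{\vartheta_4^4}-(\vartheta_2^8+2\vartheta_2^4\vartheta_4^4)\partial_{\vartheta_2^4}\big]\boldsymbol{\mathcal{N}}_k-2k(2k+1)\big(\vartheta_2^8+\vartheta_2^4\vartheta_4^4+\vartheta_4^8\big)\boldsymbol{\mathcal{N}}_{k-1}$, $\boldsymbol{\mathcal{N}}_0=1$; writing $\boldsymbol{\mathcal{N}}_k=\sum_{m+n=k}\G_{m,n}\vartheta_2^{4m}\vartheta_4^{4n}$ and collecting the coefficient of $\vartheta_2^{4m}\vartheta_4^{4n}$ reproduces verbatim the five-term recurrence for $\G_{m,n}$ (with $\G_{0,0}=1$ and $\G_{m,n}=0$ for $m,n<0$). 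Finally, multiplying $\sigma(2z|\tau)=\sum_k2^{2k+1}d_k z^{2k+1}$ by $\pi\ded^3\sum_{m\ge0}\frac{(-2\eta)^m}{m!}z^{2m}$ and gathering powers of $z$ gives the double sum of \eqref{t1}, because $2^{2k+1}\frac{(\pi^2/12)^k}{(2k+1)!}$ combines with $\frac{(-2\eta)^{n-k}}{(n-k)!}$ into $(-2)^n(-\tfrac{\pi^2}{6})^k\frac{\eta^{n-k}}{(n-k)!\,(2k+1)!}$.

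\emph{Symmetry, other representations, and the main difficulty.} The symmetry $\G_{m,n}=(-1)^{m+n}\G_{n,m}$ follows inductively: under $\vartheta_2^4\leftrightarrow\vartheta_4^4$ the operator $(\vartheta_4^8+2\vartheta_2^4\vartheta_4^4)\partial_{\vartheta_4^4}-(\vartheta_2^8+2\vartheta_2^4\vartheta_4^4)\partial_{\vartheta_2^4}$ changes sign while $\vartheta_2^8+\vartheta_2^4\vartheta_4^4+\vartheta_4^8$ is fixed, so $\boldsymbol{\mathcal{N}}_k(\vartheta_4^4,\vartheta_2^4)=(-1)^k\boldsymbol{\mathcal{N}}_k(\vartheta_2^4,\vartheta_4^4)$. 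The $(3,4)$- and $(3,2)$-forms of $\boldsymbol{\mathcal{N}}_\nu$ follow by substituting Jacobi's identity $\vartheta_3^4=\vartheta_2^4+\vartheta_4^4$ and regrouping, or equivalently by repeating the previous paragraph in the representations of \eqref{eq} built on $\{\vartheta_3,\vartheta_4\}$ and $\{\vartheta_3,\vartheta_2\}$, where the sign factors $\langle\gamma\rangle,\langle\delta\rangle$ of \eqref{ee} supply the alternating $(-1)^s$. I expect the only genuine labor to be the bookkeeping of the middle paragraph — converting the operator recursion into the index recurrence for $\G_{m,n}$ and carrying through the exponential multiplication — which is lengthy but entirely mechanical and is exactly the computation the text announces it suppresses. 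One may note in passing that the two forms of \eqref{t1} agree automatically (both list the Taylor coefficients of $\theta_1$), but their equality can also be checked directly from the closed differential system $\frac{d\ded^3}{d\tau}=\frac{3\ri}{\pi}\eta\,\ded^3$, $\frac{d\eta}{d\tau}=\frac{\ri}{\pi}(2\eta^2-\tfrac16 g_2)$, $\frac{d\vartheta_j^4}{d\tau}=\frac{4\ri}{\pi}\eta\,\vartheta_j^4+(\text{quadratic in }\vartheta_2^4,\vartheta_4^4)$, which shows $\ded^{-3}\frac{d^k\ded^3}{d\tau^k}$ is a polynomial in $\eta$ and $\vartheta_2^4,\vartheta_4^4$ of precisely the shape displayed.
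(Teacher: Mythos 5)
Your proposal is correct and follows essentially the route the paper itself announces and then suppresses: expand $\sigma$ via Halphen's equation \eqref{eq} in the $(\vartheta_2,\vartheta_4)$-representation, turn the resulting three-term relation for the Taylor coefficients into the five-term recurrence for $\G_{m,n}$ (with the symmetry and the other representations handled as you indicate), and multiply by the exponential factor of \eqref{st}; the heat-equation argument for the first line of \eqref{t1} is the standard one and is consistent with the paper's \eqref{heat}. The only cosmetic point is that the extra factor $2$ in $2^{2k+1}$ is precisely what upgrades the prefactor $\pi\,\ded^3$ of \eqref{st} to the $2\,\pi\,\ded^3$ appearing in \eqref{t1}.
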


\begin{remark}\label{R3}
We might of course derive representation of the type $\theta_1=\sum
C_{\mathit{mnp}}\,\g2{}^{\hspace{-0.4em}m}\,\g3{}^{\hspace{-0.4em}n}\,
\eta^p\cdot z^k$ like Weierstrassian recurrence, but $\G_{m,n}$ is more
effective than $A_{m,n}$ since all the polynomials have already been
collected in $\vartheta$-constants.
\end{remark}

It is interesting to observe that  odd derivatives
$\theta_1^{\s(2k+1)}(0|\tau)$, \ie, coefficients in front of $z^{2k+1}$
in \eqref{theta1}--\eqref{t1}, generate polynomial expressions in
variables $(\eta,\vartheta)$ which are exactly integrable $k$ times in
$\tau$.

\begin{theorem}\label{T2}
Power series for the functions
$\theta\AB{\alpha}{\beta}=\pm\theta_{2,3,4}$$:$
\begin{equation}\label{theta234}
\begin{split}
\theta\AB{\alpha}{\beta}(z|\tau) &=
\sideset{}{_k}\sum_{0}^{\infty}\,
\pow{C}{k}{\s(\alpha,\beta\s)}(\tau)\cdot z^{2k}\\
&=\vartheta\AB{\alpha}{\beta}- \vartheta\AB{\alpha}{\beta} \mbig[7]\{
2\,\eta+\frac16\,\pi^2
\mbig(\spin{\beta}\,\vartheta\AB{\alpha\sm1}{0}^4-
\spin{\alpha}\,\vartheta\AB{0}{\beta\sm1}^4\mbig)\mbig[7]\}z^2+\cdots
\end{split}
\end{equation}
are determined by the analytic expressions
\begin{equation}\label{series}
\theta\AB{\alpha}{\beta}(z|\tau) = \sideset{}{_k}\sum_{0}^\infty
\,\frac{(4\pi\ri)^k}{(2k)!}\,
\frac{d^k\vartheta\AB{\alpha}{\beta}}{d\tau^k} \cdot z^{2k}\;.
\end{equation}
The proper representation to the series \eqref{series} has the form
\begin{equation}\label{t234}
\theta\AB{\alpha}{\beta}(z|\tau)= \vartheta\AB{\alpha}{\beta}\,\,
\mbox{\large$\ds\sideset{}{_k}\sum_{0}^\infty$}
\,(-2)^k\,\Bigg\{ \sideset{}{_\nu}\sum_{0}^{k}
{\Big(}\!\!{-}\frac{\pi^2}{6}\Big)^\nu\,
\frac{\eta^{k\sm\nu}\,
\boldsymbol{\calN}_\nu^{\s(\alpha,\beta\s)}
(\vartheta)}{(k-\nu)!\,(2\nu)!} \Bigg\}\, z^{2k}
\end{equation}
with the following universal integral recurrence\/$:$
$$
\boldsymbol{\calN}_\nu^{\s(\alpha,\beta\s)}(\vartheta)=
\mbox{\large$\ds\sideset{}{_s}\sum_{0}^{\nu}$}\;
\G_{s,\,\nu\sm s}^{\s(\alpha,\beta\s)} \cdot
\vartheta\AB{\alpha\sm1}{0}^{4s}\,
\vartheta\AB{0}{\beta\sm1}^{4{\s(}\nu\sm s{\s)}\mathstrut} \;,
$$
\begin{equation*}
\begin{aligned}
\G_{m,\,n}^{\s(\alpha,\beta\s)}&= \spin{\alpha}\,(4\,n-8\,m-3)\,
\G_{m,\,n\sm1}^{\s(\alpha,\beta\s)}- \spin{\beta}\,(4\,m-8\,n-3)\,
\G_{m\sm1,\,n}^{\s(\alpha,\beta\s)}
\\&\==
-2\,(m+n-1)(2\,m+2\,n-3) \big(\G_{m\sm2,\,n}^{\s(\alpha,\beta\s)}+
\spin{\alpha{+}\beta}\, \G_{m\sm1,\,n\sm1}^{\s(\alpha,\beta\s)}+
\G_{m,\,n\sm2}^{\s(\alpha,\beta\s)}\big)\,,
\end{aligned}
\end{equation*}
where $\G_{\s 0,\,\s0}^{\s(\alpha,\beta\s)}=1$ and
$\G_{m,\,n}^{\s(\alpha,\beta\s)}=0$ if $m,n<0$.
\end{theorem}

Some remarks are in order. These recurrences are quite effective but
there are additional symmetry properties which reduce computations in
half. It is evident from the recurrence
$\G_{m,\,n}^{\s(\alpha,\beta\s)}$ itself that it has a symmetry with
respect to permutations of indices
\begin{equation}\label{perm}
\G_{n,\,m}^{\s(\alpha,\beta\s)}= (-1)^{(m+n)(\alpha+\beta+1)}\,
\G_{m,\,n}^{\s(\alpha,\beta\s)}\;,\qquad
\G_{m,\,n}^{\s(\beta,\alpha\s)}= (-1)^{(m+n)(\alpha+\beta)}\,
\G_{m,\,n}^{\s(\alpha,\beta\s)}\;.
\end{equation}
This means  that we have in effect only two recurrences
$(\alpha,\beta)=(1,0)$ and $(\alpha,\beta)=(0,0)$, \ie, $\beta$ is
always equal to zero. Redenoting
$\G_{m,\,n}^{\s(\alpha,\beta\s)}=\G_{m,\,n}^{\s(\alpha,0\s)}\FED
\G_{m,\,n}^{\s(\alpha\s)}$, we have
\begin{equation*}
\begin{aligned}
\G_{m,\,n}^{\s(\alpha\s)}&= \spin{\alpha}\,(4\,n-8\,m-3)\,
\G_{m,\,n\sm1}^{\s(\alpha\s)}- (4\,m-8\,n-3)\,
\G_{m\sm1,\,n}^{\s(\alpha\s)}
\\&\==
-2\,(m+n-1)(2\,m+2\,n-3) \big(\G_{m\sm2,\,n}^{\s(\alpha\s)}+
\spin{\alpha}\, \G_{m\sm1,\,n\sm1}^{\s(\alpha\s)}+
\G_{m,\,n\sm2}^{\s(\alpha\s)}\big)
\end{aligned}
\end{equation*}
and permutations \eqref{perm} therefore reduce to the simple formulae
$$
\G_{n,\,m}^{\s(0\s)}= (-1)^{(m+n)}\,
\G_{m,\,n}^{\s(0\s)}\;,\qquad \G_{n,\,m}^{\s(1\s)}=
\G_{m,\,n}^{\s(1\s)}\;.
$$

We see that recurrences \eqref{t1} and \eqref{t234} differ only in
multipliers. Hence they can be unified into one recurrence  much as we
did it in \eqref{universal} by introducing the parity $\varepsilon$,
but the quantity $\spin{\alpha}$ still remains. Computer tests show
that $(m,n)$-entries of matrices $\G^{\s(\beta,\alpha\s)}$ differ each
other only in  signs but we failed to find this rule.

\begin{corollary}
All the coefficients $C_k(\tau)$ and
$\pow{C}{k}{\s(\alpha,\beta\s)}(\tau)$ are the $k$-fold exactly
$\tau$-integrable $(\eta,\vartheta)$-polynomials.
\end{corollary}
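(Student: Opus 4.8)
The plan is to deduce the corollary entirely from the two explicit representations already proved in Theorems~\ref{T1} and~\ref{T2}. First I would compare the two series in \eqref{theta1}--\eqref{t1} and in \eqref{theta234}--\eqref{series} and read off the closed forms $C_k(\tau)=\tfrac{2\pi(4\pi\ri)^k}{(2k+1)!}\,\tfrac{d^{\,k}\ded^3}{d\tau^{\,k}}$ and $\pow{C}{k}{\sss(\alpha,\beta)}(\tau)=\tfrac{(4\pi\ri)^k}{(2k)!}\,\tfrac{d^{\,k}\vartheta\AB{\alpha}{\beta}}{d\tau^{\,k}}$; in particular $C_0=2\pi\,\ded^3=\pi\,\vartheta_2\vartheta_3\vartheta_4$ by \eqref{ded} and $\pow{C}{0}{\sss(\alpha,\beta)}=\vartheta\AB{\alpha}{\beta}=\pm\vartheta_{2,3,4}$. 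That each $C_k$ and $\pow{C}{k}{\sss(\alpha,\beta)}$ is \emph{already} an $(\eta,\vartheta)$-polynomial is nothing but the content of Theorems~\ref{T1}--\ref{T2}, displayed through the integral recurrences $\G_{m,n}$ and $\G_{m,n}^{\sss(\alpha,\beta)}$ whose entries are integers. So the only genuinely new point to prove is the \emph{$k$-fold exact $\tau$-integrability}: that each coefficient equals the $k$-th $\tau$-derivative of an element of the very same polynomial ring.

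For that I would fix $k$ and introduce the tower $F_j\DEF\tfrac{2\pi(4\pi\ri)^k}{(2k+1)!}\,\tfrac{d^{\,j}\ded^3}{d\tau^{\,j}}$ for $0\le j\le k$, so that $F_k=C_k$, $\tfrac{dF_j}{d\tau}=F_{j+1}$, and $F_0$ is a constant multiple of the monomial $\vartheta_2\vartheta_3\vartheta_4$. The decisive observation is that $F_j$ differs from $C_j$ only by a nonzero numerical factor, whence by Theorem~\ref{T1} every $F_j$ lies in $\mathbb{C}[\eta,\vartheta_2,\vartheta_3,\vartheta_4]$. Consequently, integrating $C_k$ once in $\tau$ with zero integration constant returns $F_{k-1}$, a second time returns $F_{k-2}$, and after $k$ steps one lands on $F_0$; all $k$ intermediate primitives remain $(\eta,\vartheta)$-polynomials, which is exactly the assertion (the $k$-fold primitive of $C_k$ being $F_0$). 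The identical argument with $\ded^3$ replaced by $\vartheta\AB{\alpha}{\beta}$ disposes of $\pow{C}{k}{\sss(\alpha,\beta)}$, its $k$-fold primitive being $\tfrac{(4\pi\ri)^k}{(2k)!}\,\vartheta\AB{\alpha}{\beta}$.

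The one step I expect to require real care — and, if one wants the sharp reading in which the process terminates \emph{exactly} at order $k$, the main obstacle — is showing that the seeds $\ded^3$ and $\vartheta_{2,3,4}$ admit no $\tau$-primitive inside $\mathbb{C}[\eta,\vartheta_2,\vartheta_3,\vartheta_4]$. I would settle this by a weight count: in the conventions of \eqref{eta} and \eqref{star} the Weierstrassian $\eta$ obeys the classical second-order relation $\tfrac{d\eta}{d\tau}=\tfrac{2\ri}{\pi}\,\eta^2-\tfrac{\ri}{6\pi}\,g_2$ (the Ramanujan identity for $E_2$, with $g_2$ a $\vartheta$-polynomial), so $\eta$ behaves like a quasi-modular form of weight $2$, each $\vartheta_k$ like a form of weight $\tfrac12$, and $\ded^3$ like one of weight $\tfrac32$; since $\tfrac{d}{d\tau}$ raises weight by $2$, a primitive of $\ded^3$ (resp. $\vartheta_k$) would carry weight $-\tfrac12$ (resp. $-\tfrac32$) and cannot occur in the ring. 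Everything else is bookkeeping: the differential closedness of $\mathbb{C}[\eta,\vartheta]$ under $\tfrac{d}{d\tau}$ — which is what makes all the $F_j$ polynomial — is already built into Theorems~\ref{T1}--\ref{T2} and rests only on $\tfrac{d\ded^3}{d\tau}=\tfrac{3\ri}{\pi}\,\eta\,\ded^3$ from \eqref{ded}, the expression for $\tfrac{d\vartheta\AB{\alpha}{\beta}}{d\tau}$ read off the $z^2$-coefficient in \eqref{theta234} (equivalently the heat equation $4\pi\ri\,\theta_\tau=\theta_{zz}$ at $z=0$), and the formula for $\tfrac{d\eta}{d\tau}$ above; the only subtlety in the middle paragraph is to keep the additive integration constants inside the ring, zero being the natural choice.
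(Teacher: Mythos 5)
Your proposal is correct and is essentially the paper's own (implicit) argument: the corollary follows at once from the first representations in Theorems~\ref{T1}--\ref{T2}, which exhibit $C_k$ and $C_k^{{\sss(}\alpha,\beta{\sss)}}$ as fixed numerical multiples of $d^k\ded^3\!/d\tau^k$ and $d^k\vartheta_{\alpha\beta}/d\tau^k$, together with the differential closedness of $\mathbb{C}[\eta,\vartheta]$ coming from \eqref{ded} and \eqref{var}, so each successive primitive is again an $(\eta,\vartheta)$-polynomial down to the seed $\ded^3$ or $\vartheta_{\alpha\beta}$. Your third paragraph proves a sharper statement than the corollary asserts (``exactly'' here means integrable in closed form within the ring, not ``$k$ times and no more''); the weight argument you sketch for that sharpening is nevertheless sound, with the one caveat that the ring is not free because of Jacobi's identity \eqref{324} -- the grading argument survives only because that relation is weight-homogeneous, a point worth stating explicitly if you keep this extra step.
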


In Sect.~\ref{S7} we shall show that this integrability is a
consequence of one dynamical system. With use of the formulae above one
can construct series in neighborhoods of points
$z=\big\{{\pm}\frac12,\,\pm\frac\tau2\big\}$. By virtue of
\eqref{shift}, the resulting series are transformed into each other
with some obvious modifications.

\section{Dynamical systems satisfied by $\theta$-series\label{S4}}

\noindent In this and next section we describe new and important
%fundamental
property of Jacobi's $\vartheta$, $\theta$, and $\theta'$-series.
These, along with elliptic, elementary, or rational functions, are
differentially closed and define thereby the calculus in its own right.

\begin{theorem}\label{T3}
The five functions $\theta_1(z|\tau)$, $\theta_2(z|\tau)$,
$\theta_3(z|\tau)$, $\theta_4(z|\tau)$, and $\Dtheta(z|\tau)$ satisfy
the closed autonomous ordinary differential equations over the field of
coefficients $\vartheta_2$, $\vartheta_3$, $\vartheta_4$, and
$\eta$\/$:$
\begin{equation}\label{X'}
\left\{
\begin{aligned}
\frac{\partial\theta_1}{\partial z}&=\Dtheta\\
\frac{\partial\theta_2}{\partial z}&=
\frac{\Dtheta}{\theta_1}\,\theta_2- \pi\,\vartheta_2^2\cdot
\frac{\theta_3\theta_4}{\theta_1}\\
\frac{\partial\theta_4}{\partial z}&=
\frac{\Dtheta}{\theta_1}\,\theta_4^{}- \pi\,\vartheta_4^2\cdot
\frac{\theta_2\theta_3}{\theta_1}\\
\frac{\partial\theta_3}{\partial z}&=
\frac{\Dtheta}{\theta_1}\,\theta_3- \pi\,\vartheta_3^2\cdot
\frac{\theta_2\theta_4}{\theta_1}\\
\frac{\partial\Dtheta}{\partial z}&=
\frac{\Dtheta{}^2}{\theta_1}-\pi^2\vartheta_3^2\,\vartheta_4^2
\cdot \frac{\theta_2^2}{\theta_1}-
4\,\bigg\{\eta+\frac{\pi^2}{12}\big(\vartheta_3^4+\vartheta_4^4\big)
\bigg\} \cdot\theta_1\,.
\end{aligned}
\right.
\end{equation}
\end{theorem}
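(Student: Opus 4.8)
The plan is to check the five equations of~\eqref{X'} in turn. The first is nothing but the definition~\eqref{der} of $\Dtheta$, and the remaining four reduce to classical bilinear $\theta$-identities which I would prove by Liouville's theorem. For the equations governing $\theta_{2,3,4}$ I would first divide through by $\theta_k$ and subtract $\Dtheta/\theta_1=\theta_1'/\theta_1$, so that they take the equivalent form
\begin{equation}\label{logeq}
\frac{\theta_k'(z|\tau)}{\theta_k(z|\tau)}-\frac{\theta_1'(z|\tau)}{\theta_1(z|\tau)}
=-\pi\,\vartheta_k^2\,\frac{\theta_i(z|\tau)\,\theta_j(z|\tau)}
{\theta_1(z|\tau)\,\theta_k(z|\tau)}\,,\qquad\{i,j,k\}=\{2,3,4\}\,.
\end{equation}
The point is that both sides of~\eqref{logeq} are \emph{elliptic} in $z$ with periods $1$ and $\tau$: under $z\mapsto z+\tau$ each of $\theta_k'/\theta_k$ and $\theta_1'/\theta_1$ acquires the same additive constant $-2\pi\ri$, coming from the exponential multiplier $\re^{-\pi\ri(2z+\tau)}$ common to all four $\theta$'s in~\eqref{shift}, so that this constant cancels in the difference, while under $z\mapsto z+1$ the sign ambiguities drop out entirely; an analogous (easier) check applies to the right-hand side. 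Next I would compare principal parts. Modulo the lattice the left-hand side has exactly the two simple poles $z=0$ (zero of $\theta_1$, residue $-1$) and $z=z_k$ (zero of $\theta_k$, residue $+1$); the right-hand side has simple poles at the same two points, with residue at $z=0$ equal to $-\pi\,\vartheta_k^2\,\frac{\vartheta_i\vartheta_j}{\theta_1'(0|\tau)\,\vartheta_k}=-1$, using the Jacobi derivative formula $\theta_1'(0|\tau)=\Dtheta(0|\tau)=\pi\,\vartheta_2\vartheta_3\vartheta_4$ and the trivial relation $\vartheta_2\vartheta_3\vartheta_4=\vartheta_i\vartheta_j\vartheta_k$; since the residues of an elliptic function sum to zero, the residue at $z_k$ is then $+1$ as well. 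Hence the two sides of~\eqref{logeq} differ by a pole-free elliptic function, i.e.\ by a constant, and this constant must vanish because $\theta_1$ is odd and $\theta_{2,3,4}$ are even in $z$, so that both sides of~\eqref{logeq} are odd functions of $z$. (Equivalently, one recognizes $\theta_k/\theta_1$ as a constant multiple of a quotient of the Jacobi functions $\mathrm{sn},\mathrm{cn},\mathrm{dn}$ and differentiates using $\mathrm{sn}'=\mathrm{cn}\,\mathrm{dn}$ and its companions.)

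For the last equation I would start from~\eqref{st} in the form $\theta_1(z|\tau)=\pi\,\ded^3\,\re^{-2\eta z^2}\,\sigma(2z|1,\tau)$ and use the Weierstrass identity $\partial_u^2\log\sigma(u)=-\wp(u)$, which yield
$$
\frac{\partial^2}{\partial z^2}\log\theta_1(z|\tau)
=\frac{\partial_z\Dtheta}{\theta_1}-\frac{\Dtheta^2}{\theta_1^2}
=-4\,\eta-4\,\wp(2z|1,\tau)\,.
$$
Multiplying by $\theta_1$ and invoking the first equation turns this into $\partial_z\Dtheta=\Dtheta^2/\theta_1-\big(4\,\eta+4\,\wp(2z|1,\tau)\big)\theta_1$, after which it only remains to substitute the classical $\vartheta$-constant formulas $\wp(1|1,\tau)=\frac{\pi^2}{12}\big(\vartheta_3^4+\vartheta_4^4\big)$ and $\wp(2z|1,\tau)-\wp(1|1,\tau)=\frac{\pi^2}{4}\,\vartheta_3^2\vartheta_4^2\,\theta_2^2/\theta_1^2$ (the latter read off from~\eqref{st} and the relation $2\,\ded^3=\vartheta_2\vartheta_3\vartheta_4$ of~\eqref{ded}); this reproduces the last line of~\eqref{X'} exactly.

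Closedness and autonomy are then immediate, since every right-hand side of~\eqref{X'} is a rational function of $\theta_1,\theta_2,\theta_3,\theta_4,\Dtheta$ whose coefficients are polynomials in $\vartheta_2,\vartheta_3,\vartheta_4,\eta$ --- functions of $\tau$ alone. The step I expect to demand the most care is none of the above analysis but the bookkeeping of constants: reconciling the normalization of the present $z$-convention for $\theta$ with the Weierstrass $\sigma$-normalization (half-periods $1,\tau$) and with the standard $e_\lambda$- and Jacobi-function formulas, and keeping track of which branch point $e_\lambda$ is paired with which $\theta_k$ in $\wp(2z|1,\tau)-e_\lambda$ and in $e_\lambda(\vartheta)$. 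These identifications are classical (see Tannery \& Molk~\cite{tannery} or Whittaker \& Watson~\cite{WW}) and purely computational, so no conceptual difficulty remains once they are pinned down.
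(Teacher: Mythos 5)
Your proof is correct, and for the heart of the theorem it takes a genuinely different route from the paper. The paper does not re-derive the four bilinear relations $\theta_1\theta_k'-\theta_k\theta_1'=-\pi\vartheta_k^2\,\theta_i\theta_j$: it imports them as the classical differential identities \eqref{diff} (the $\theta$-form of Weierstrass's relations between $\sigma,\zeta$ at half-periods, quoted from Weber and Tannery--Molk), solves them for the derivatives, and then obtains the fifth equation from $(\sigma\zeta)'=\sigma\zeta^2-\sigma\wp$ together with \eqref{st}, \eqref{wp1}, \eqref{wp2}. You instead prove the bilinear identities from scratch by a Liouville argument (both sides of your logarithmic form are elliptic, the principal parts at the only two simple poles $z=0$ and $z=z_k$ agree, and oddness kills the residual constant); your treatment of the fifth equation, via $\partial_u^2\log\sigma=-\wp$ applied to \eqref{st} and then \eqref{wp2}, is essentially the paper's computation in logarithmic disguise. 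Your route is self-contained where the paper defers to the literature, which is a genuine plus; the price is that your residue evaluation at $z=0$ consumes Jacobi's derivative formula $\theta_1'(0|\tau)=\pi\vartheta_2\vartheta_3\vartheta_4$ as an input. That is perfectly legitimate for proving Theorem~\ref{T3} itself, but note that the paper immediately afterwards advertises this very identity as an \emph{automatic consequence} of \eqref{X'} at $z=0$ (``one more simple proof''); with your derivation that remark would become circular, whereas the paper's proof, resting on the $\sigma,\zeta$ half-period relations, keeps it available. Minor bookkeeping points you flagged (pairing of $e_\lambda$ with $\theta_{k+1}$, the half-period normalization $\omega=1$, $\omega'=\tau$, and $2\ded^3=\vartheta_2\vartheta_3\vartheta_4$) are indeed all that separates your constants from \eqref{wp2} and the $\frac{\pi^2}{12}(\vartheta_3^4+\vartheta_4^4)$ term, and they check out.
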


\begin{proof}
The proof is based on the theta-function differential identities which
occur infrequently in the literature (mostly in the old books;
\cite[p.~82]{weber}, \cite[{\bf II}:~p.~173]{tannery}\footnote{These
important relations are very implicitly  present in Jacobi's
\emph{Werke} \cite[\bf I]{jacobi} but have not got to the thorough
handbook for elliptic functions \cite{we2} compiled by Schwarz from
Weierstrassian lectures. Even lectures themselves \cite{we} contain no
these identities in  $\theta$-form. They present in \cite[p.~29]{we2},
\cite{we} in form of their $(\zeta,\sigmalambda)$-equivalents.
Differential relations for quotients of $\theta$-functions are of
course well known. These are differential equations for elliptic
functions \cite[Sect.~21$\boldsymbol{\cdot}$6]{WW},
\cite{baruch,krause,koenig,weber}.}). These relationships are nothing
else but those between the Weierstrassian functions
$(\sigma,\zeta)(z|\tau)$ taken at different half-periods \cite{we2}. We
can present them in the compact form
\begin{equation}\label{diff}
\theta_\mu\Dtheta[\nu]-
\theta_\nu\Dtheta[\mu]=\mathfrak{sign}(\nu-\mu)\,
\pi\,\vartheta_k^2\cdot\theta_1\theta_k\,,
\end{equation}
where $k=2,3,4$ and
\begin{equation}\label{nm}
\nu=\frac{8\,k-28}{3\,k-10}\,,\qquad\mu=\frac{10\,k-28}{3\,k-8}\,;
\end{equation}
the triple $(k,\nu,\mu)$ runs over the set $\{(2, 3, 4)$, $(3, 4, 2)$,
$(4, 2, 3)\}$. In order to turn identities \eqref{diff} into
differential equations we should find their differential closure.
Taking the property  $\vartheta_1\equiv 0$ into account, we can solve
\eqref{diff} with respect to the $\theta$-derivatives and rewrite the
result as first four equations in \eqref{X'}:
$$
\frac{\partial\theta_k}{\partial z}= \frac{\Dtheta}{\theta_1}
\,\theta_k- \pi\,\vartheta_k^2\cdot
\frac{\theta_\nu\,\theta_\mu}{\theta_1}\,,\qquad k=1,2,3,4\,.
$$
It only remains to compute  derivative of the object $\Dtheta$.
Consider  Weierstrassian identity
$$
(\sigma\,\zeta)'=\sigma\,\zeta^2-\sigma\,\wp
$$
and convert it into the $\theta$-functions. Then formulae
\begin{alignat}{1}
\zeta(2z|\tau)&=2\,\eta(\tau)\,z+\frac12\,
\frac{\Dtheta\!(z|\tau)}{\theta_1\!(z|\tau)}\,,\label{wp1}\\
\wp(2z|\tau)&=\frac{\pi^2}{12}\,
\bigg\{\vartheta_3^4(\tau)+\vartheta_4^4(\tau)+3\,\vartheta_3^2(\tau)\,
\vartheta_4^2(\tau)\,
\frac{\theta_2^2(z|\tau)}{\theta_1^2(z|\tau)}\bigg\} \label{wp2}
\end{alignat}
and formula \eqref{st} yield the fifth equation in \eqref{X'}.
\end{proof}

Weierstrass himself derived a $(\zeta,\sigmalambda)$-equivalent of
relations \eqref{diff} in a reverse order
\cite[\hbox{\S\S\,24--25}]{we2}, \ie, by differentiating the
$\sigma$-identities followed by use of differential equations for
ratios of $\sigma$-functions. It should be noted here that the closed
$\theta$-form of Weierstrassian identities contains not only branch
points $\elambda$, \ie, $\vartheta$-constants, but also the `constant'
$\eta$. In other words, the closed differential $\theta$-apparatus
inevitably contains a fifth function---any of $\Dtheta[k]$---and period
of a meromorphic elliptic integral, \ie,  $\eta(\tau)$; the total
number of equations is thus equal to five.

As mentioned in Introduction, Theorem~\ref{T3} appears explicable on
the basis of the theory of Abelian integrals. Namely, these integrals
are differentially closed and elliptic functions are the particular
case of the meromorphic integrals (integrals of exact differentials).
The logarithmic integral is a logarithmic $\theta$-ratio and canonical
meromorphic integral---Weierstrassian $\zeta$-function---is
proportional to the fifth function $\Dtheta$.

It is notable that the famous Jacobi identity
$\Dvartheta=\pi\,\vartheta_2\,\vartheta_3\,\vartheta_4$ turns out to be
an automatical consequence of Eqs.~\eqref{X'} taken at point $z=0$ and
this property pertains equally to generalizations of this identity
presented by formulae \eqref{jacobi}--\eqref{jac} next. By this means
we get one more (simple) proof of Jacobi's identity meanwhile in the
Whittaker--Watson book \cite{WW} all the known proofs of this identity
are summarized as `none are simple'
\cite[Sect.~21$\boldsymbol{\cdot}$41]{WW}.

The next step  suggests itself. All the $\theta,\theta'$-functions
satisfy the heat equation:
\begin{equation}\label{heat}
4\,\pi\,\ri\,\frac{\partial\theta}{\partial\tau}=
\frac{\partial^2\theta}{\partial z^2}\,,\qquad
4\,\pi\,\ri\,\frac{\partial\theta'}{\partial\tau}=
\frac{\partial^2\theta'}{\partial z^2}\,.
\end{equation}
Therefore, invoking Theorem~\ref{T3}, we establish that differential
closedness is shared also by theta-functions as functions of their
second argument.

\begin{theorem}\label{T4}
The five Jacobi's functions $\theta_k(z|\tau)$ and $\Dtheta(z|\tau)$
satisfy the  closed non-au\-to\-no\-mous ordinary differential
equations\/$:$
\begin{alignat*}{5}
\frac{\partial \theta_1}{\partial\tau}&=
\frac{-\ri}{4\pi}\,\frac{\Dtheta{}^2}{\theta_1}+{} &&{}+ \frac\ri4\pi
\,\vartheta_3^2\,\vartheta_4^2\cdot \frac{\theta_2^2}{\theta_1} &&{}+
\frac{\ri}{\pi}\bigg\{\eta+\frac{\pi^2}{12}
\big(\vartheta_3^4+\vartheta_4^4\big) \bigg\}\cdot\theta_1\,,
\\
\frac{\partial \theta_2}{\partial\tau}&= \frac{-\ri}{4\pi}
\bigg\{\frac{\Dtheta}{\theta_1}- \pi\,\vartheta_2^2\cdot
\frac{\theta_3\theta_4} {\theta_1\theta_2}\bigg\}^{\!\!2}\theta_2
&&{}+\frac\ri4\pi\,\vartheta_3^2\,\vartheta_4^2\cdot
\frac{\theta_1^2}{\theta_2}&&{}+
\frac{\ri}{\pi}\bigg\{\eta+\frac{\pi^2}{12}
\big(\vartheta_3^4+\vartheta_4^4\big) \bigg\}\cdot\theta_2\,,
\\
\frac{\partial \theta_3}{\partial\tau}&= \frac{-\ri}{4\pi}\,
\frac{\Dtheta{}^2}{\theta_1^2}\,\theta_3
+\frac{\ri}{2}\vartheta_3^2\cdot
\Dtheta\,\frac{\theta_2\theta_4}{\theta_1^2} &&{}-
\frac\ri4\pi\,\vartheta_2^2\,\vartheta_3^2\cdot
\frac{\theta_4^2}{\theta_1^2}\,\theta_3&&{}+
\frac{\ri}{\pi}\bigg\{\eta+\frac{\pi^2}{12}
\big(\vartheta_3^4+\vartheta_4^4\big) \bigg\}\cdot\theta_3\,,
\\
\frac{\partial \theta_4}{\partial\tau}&=
\frac{-\ri}{4\pi}\,\frac{\Dtheta{}^2}{\theta_1^2}\,\theta_4
+\frac{\ri}{2}\vartheta_4^2\cdot
\Dtheta\,\frac{\theta_2\theta_3}{\theta_1^2} &&{}-
\frac\ri4\pi\,\vartheta_2^2\,\vartheta_4^2\cdot
\frac{\theta_3^2}{\theta_1^2}\,\theta_4&&{}+
\frac{\ri}{\pi}\bigg\{\eta+\frac{\pi^2}{12}
\big(\vartheta_3^4+\vartheta_4^4\big) \bigg\}\cdot\theta_4\,,
\\
\frac{\partial \Dtheta}{\partial\tau}&=
\frac{-\ri}{4\pi}\,\frac{\Dtheta{}^3}{\theta_1^2}
+\frac{3\,\ri}{\pi}\bigg\{
\frac{\pi^2}{4}\,\vartheta_3^2\,\vartheta_4^2\cdot
\frac{\theta_2^2}{\theta_1^2} +\eta+
\frac{\pi^2}{12}\big(\vartheta_3^4+\vartheta_4^4\big) \bigg\}\,\Dtheta
-\frac\ri2\pi^2\,\vartheta_2^2\,\vartheta_3^2\, \vartheta_4^2\cdot
\frac{\theta_2\theta_3\theta_4}{\theta_1^2}\,. \hspace{-50em}
\end{alignat*}
\end{theorem}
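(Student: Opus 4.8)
The plan is to derive Theorem~\ref{T4} from Theorem~\ref{T3} through the heat equation~\eqref{heat}. Since $4\pi\ri\,\partial_\tau\theta=\partial_z^2\theta$ holds for each of the five functions $\theta_1,\dots,\theta_4,\Dtheta$, it is enough to compute $\partial_z^2$ of each of them and multiply the result by $\tfrac{1}{4\pi\ri}=-\tfrac{\ri}{4\pi}$. The essential observation is that these second $z$-derivatives come out purely mechanically: one differentiates the right-hand sides of the \emph{autonomous} first-order system~\eqref{X'} once more in $z$ and re-inserts~\eqref{X'} wherever a $\theta$- or a $\Dtheta$-derivative reappears. Because $\vartheta_2,\vartheta_3,\vartheta_4,\eta$ are constants in $z$, the outcome is automatically a rational expression in $\theta_1,\dots,\theta_4,\Dtheta$ over the field generated by those four constants, which is precisely the shape asserted; in particular the equations are manifestly closed, and they are non-autonomous only because the coefficients $\vartheta_k(\tau),\eta(\tau)$ themselves vary with $\tau$. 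The relation invoked repeatedly is $\partial_z(\Dtheta/\theta_1)=-\pi^2\vartheta_3^2\vartheta_4^2\,\theta_2^2/\theta_1^2-4H$, where $H=\eta+\tfrac{\pi^2}{12}(\vartheta_3^4+\vartheta_4^4)$; it follows at once from the first and fifth lines of~\eqref{X'} and produces most of the cancellations below.

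The case of $\theta_1$ is instantaneous: $\partial_z^2\theta_1=\partial_z\Dtheta$ is literally the fifth line of~\eqref{X'}, and multiplying it by $-\tfrac{\ri}{4\pi}$ (recall $1/\ri=-\ri$) gives the first equation of the theorem. For $\Dtheta$ one differentiates the fifth line of~\eqref{X'} once more and substitutes the first, second and fifth lines; this writes $\partial_z^2\Dtheta$ as a combination of the four monomials $\Dtheta^3/\theta_1^2$, $\Dtheta\,\theta_2^2/\theta_1^2$, $\Dtheta$ and $\theta_2\theta_3\theta_4/\theta_1^2$ with exactly the coefficients that, after multiplication by $-\tfrac{\ri}{4\pi}$, give the last equation. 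For $\theta_3$---and, after the index interchange $3\leftrightarrow4$, verbatim for $\theta_4$---one differentiates the fourth line of~\eqref{X'}; the point is that the two contributions proportional to $\vartheta_3^2\vartheta_4^2\,\theta_2^2\theta_3/\theta_1^2$, one from $\partial_z(\Dtheta\theta_3/\theta_1)$ and one from $\partial_z(\theta_2\theta_4/\theta_1)$, cancel, leaving precisely the four terms displayed for $\partial_\tau\theta_3$.

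The only case that does not collapse by bare bookkeeping is $\theta_2$, and that only because it is stated in the compact ``squared'' form. Differentiating the second line of~\eqref{X'} produces a six-term expression for $\partial_z^2\theta_2$. On the other hand $\partial_z\theta_2=\theta_2\,w$ with $w=\Dtheta/\theta_1-\pi\vartheta_2^2\,\theta_3\theta_4/(\theta_1\theta_2)=\partial_z\ln\theta_2$, so $\partial_z^2\theta_2=\theta_2\,(w^2+\partial_z w)$, and the asserted form is equivalent to the claim $\theta_2\,\partial_z w=-\pi^2\vartheta_3^2\vartheta_4^2\,\theta_1^2/\theta_2-4H\theta_2$. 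Matching the two expressions term by term, everything cancels except for the purely algebraic identity
\[
\vartheta_2^2\,\theta_2^2\big(\vartheta_3^2\,\theta_4^2+\vartheta_4^2\,\theta_3^2\big)-\vartheta_3^2\vartheta_4^2\,\theta_2^4
=\vartheta_2^4\,\theta_3^2\theta_4^2-\vartheta_3^2\vartheta_4^2\,\theta_1^4 ,
\]
which is a routine consequence of the classical quadratic $\vartheta$--$\theta$ relations: using $\vartheta_3^2\theta_3^2=\vartheta_2^2\theta_2^2+\vartheta_4^2\theta_4^2$ and Jacobi's $\vartheta_3^4=\vartheta_2^4+\vartheta_4^4$ it is equivalent to the quartic identity $\theta_1^4(z)+\theta_3^4(z)=\theta_2^4(z)+\theta_4^4(z)$, which itself results from squaring and adding two of those quadratic relations. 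I expect this last reduction to be the only genuine obstacle---recognising that the discrepancy in the $\theta_2$-equation is accounted for by a classical quartic theta-identity; everything else amounts to one extra $z$-differentiation of~\eqref{X'}, a re-substitution of~\eqref{X'}, and collecting terms. One could of course sidestep the identity by writing the $\theta_2$-equation in the same expanded form as those for $\theta_3$ and $\theta_4$.
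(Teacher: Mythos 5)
Your proposal is correct and is essentially the paper's own route: Theorem~\ref{T4} is obtained by applying the heat equation \eqref{heat} to the autonomous $z$-system \eqref{X'}, i.e.\ computing $\theta_{zz}$ by one more $z$-differentiation with re-substitution of \eqref{X'} and multiplying by $-\ri/4\pi$, the heat equation being needed in particular for the $\theta_2$-equation. The quartic identity you isolate for the $\theta_2$ case is genuine (it is just the product of the two quadratic relations \eqref{j}), so no gap remains.
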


When deriving second of these equations the standard quadratic
theta-identities
\begin{equation}\label{j}
\vartheta_2^2\,\theta_4^2-\vartheta_4^2\,\theta_2^2=
\vartheta_3^2\,\theta_1^2\,,\qquad
\vartheta_2^2\,\theta_3^2-\vartheta_3^2\,\theta_2^2=
\vartheta_4^2\,\theta_1^2
\end{equation}
were used. This system, combined with Eqs.~\eqref{X'}, constitutes a
complete set of rules for differential computations with theta-series
and, invoking notation \eqref{nm}, the rules can be written in the
compact form
\begin{align}
& \left\{
\begin{aligned}
\frac{\partial\theta_k}{\partial z}&= \frac{\Dtheta}{\theta_1}
\,\theta_k- \pi\,\vartheta_k^2\cdot
\frac{\theta_\nu\,\theta_\mu}{\theta_1}\\
\frac{\partial\Dtheta}{\partial z}&=
\frac{\Dtheta{}^2}{\theta_1}-\pi^2\vartheta_3^2\,\vartheta_4^2
\cdot \frac{\theta_2^2}{\theta_1}-
4\,\bigg\{\eta+\frac{\pi^2}{12}\big(\vartheta_3^4+\vartheta_4^4\big)
\bigg\} \cdot\theta_1\,,\label{X}
\end{aligned}
\right.\\[2ex]
&
\left\{
\begin{aligned}\label{Dtau}
\frac{\partial\theta_k}{\partial \tau}&= \frac{-\ri}{4\pi}\,
\frac{\Dtheta{}^2}{\theta_1^2} \,\theta_k+ \frac{\ri}{2}\,
\vartheta_k^2\cdot\Dtheta\,
\frac{\theta_\nu\,\theta_\mu}{\theta_1^2}\\
&\==+ \frac\ri4\pi \Big\{
\vartheta_3^2\,\vartheta_4^2\cdot\theta_2^2-
\vartheta_k^2\,\vartheta_\mu^2\cdot\theta_\nu^2-
\vartheta_k^2\,\vartheta_\nu^2\cdot\theta_\mu^2
\Big\}\,\frac{\theta_k}{\theta_1^2}
+\frac{\ri}{\pi}\bigg\{ \eta+\frac{\pi^2}{12}
\big(\vartheta_3^4+ \vartheta_4^4\big)\bigg\}\cdot
\theta_k\\
\frac{\partial \Dtheta}{\partial\tau}&=
\frac{-\ri}{4\pi}\,\frac{\Dtheta{}^3}{\theta_1^2}
+\frac{3\,\ri}{\pi}\bigg\{
\frac{\pi^2}{4}\,\vartheta_3^2\,\vartheta_4^2\cdot
\frac{\theta_2^2}{\theta_1^2} +\eta+
\frac{\pi^2}{12}\big(\vartheta_3^4+\vartheta_4^4\big)
\bigg\}\,\Dtheta \\
&\==-\frac\ri2\pi^2\,\vartheta_2^2\,\vartheta_3^2\,
\vartheta_4^2\cdot
\frac{\theta_2\theta_3\theta_4}{\theta_1^2}\,,
\end{aligned}
\right.
\end{align}
where $k=1,2,3,4$. These formulae, incidentally, are not completely
symmetric and no theta-identities were involved when deriving them;
this important point is discussed  in Sect.~\ref{S9.3}.

\section{The $\vartheta$-constant differential calculus\label{S5}}

Equations \eqref{Dtau} contain $\vartheta(\tau)$- and
$\eta(\tau)$-constants but their derivatives have not been defined yet.
On the other hand, Weierstrass' invariants \eqref{star} have their
$\vartheta$-constant equivalents
\begin{equation}\label{g23}
\begin{aligned}
\g2(\tau)&={\phantom{4}}\frac{\pi^4}{24}
\big\{\vartheta_2^8(\tau)+\vartheta_3^8(\tau)+\vartheta_4^8(\tau)
\big\}\,,\\
\g3(\tau)&=\,\frac{\pi^6}{432}
\big\{\vartheta_2^4(\tau)+\vartheta_3^4(\tau) \big\}
\big\{\vartheta_3^4(\tau)+\vartheta_4^4(\tau) \big\}
\big\{\vartheta_4^4(\tau)-\vartheta_2^4(\tau) \big\}
\end{aligned}
\end{equation}
and satisfy the famous Halphen dynamical system \cite[{\bf I}:~pp.~331,
449--450]{halphen}
\begin{equation}\label{g2g3}
\frac{d \g2}{d\tau} = \frac{\ri}{\pi}
\Big(8\,\g2\,\eta-12\,\g3\Big)\,,\qquad \frac{d
\g3}{d\tau}= \frac{\ri}{\pi}
\Big(12\,\g3\,\eta-\Mfrac23\,\g2{}^{\hspace{-0.4em}2}\Big)\,,\qquad
\frac{d\eta}{d\tau}=
\frac{\ri}{\pi}\Big(2\,\eta^2-\Mfrac16\,\g2\Big)\,,
\end{equation}
involving the $\eta$-function. In implicit form this system was also
written down by Weierstrass \cite[{\bf II}:~p.~249]{we} and Ramanujan
obtained its equivalent \cite{ramanujan} when studying his known
number-theoretic $P,Q,R$-series. It immediately follows that
differential closure of the $\vartheta$'s requires an extension of the
system \eqref{g2g3} to a 4-dimensional version. It is a direct
corollary of Eqs.~\eqref{g23}--\eqref{g2g3}.

\begin{theorem}\label{T5}
Jacobi's $\vartheta$-constants are differentially closed upon
adjoining the Weierstrass $\eta$-function\/$:$
\begin{equation}\label{var}
\begin{aligned}
\frac{d\vartheta_2}{d\tau}&=
\frac{\ri}{\pi}\,\bigg\{\eta+\frac{\pi^2}{12}\,
\big(\vartheta_3^4+\vartheta_4^4
\big)\bigg\}\,\vartheta_2\,,\\
\frac{d\vartheta_3}{d\tau}&=
\frac{\ri}{\pi}\,\bigg\{\eta+\frac{\pi^2}{12}\,
\big(\vartheta_2^4-\vartheta_4^4 \big)\bigg\}\,\vartheta_3\,,
\end{aligned}
\qquad
\begin{aligned}
\frac{d\vartheta_4}{d\tau}&=
\frac{\ri}{\pi}\,\bigg\{\eta-\frac{\pi^2}{12}\,
\big(\vartheta_2^4+\vartheta_3^4
\big)\bigg\}\,\vartheta_4\,,\\
\frac{d\eta}{d\tau}&=\frac{\ri}{\pi}\,\bigg\{2\,\eta^2-
\frac{\pi^4}{12^2}
\big(\vartheta_2^8+\vartheta_3^8+\vartheta_4^8 \big) \bigg\}\,.
\end{aligned}
\end{equation}
\end{theorem}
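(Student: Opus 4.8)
The statement splits into two unrelated parts: the equation for $\eta$ in \eqref{var}, and the three equations for $\vartheta_{2,3,4}$. The first is immediate. Substituting the $g_2$-line of \eqref{g23} into the third (the $\eta$-)equation of the Halphen system \eqref{g2g3}, and using $\tfrac16\cdot\tfrac{\pi^4}{24}=\tfrac{\pi^4}{12^2}$, reproduces verbatim the last line of \eqref{var}; nothing is left to do. (Alternatively one pushes the heat equation \eqref{heat} for $\theta_1$ to order $z^5$, inserts the expansion \eqref{theta1} of Theorem~\ref{T1}, and applies Jacobi's identity $\vartheta_3^4=\vartheta_2^4+\vartheta_4^4$ once; the outcome is the same.) So the substance is the three $\vartheta$-equations, and the plan for those is to evaluate the heat equation at the origin.

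Putting $z=0$ in \eqref{heat} and using $\theta_{\alpha\beta}(0|\tau)=\vartheta_{\alpha\beta}(\tau)$ gives
\[
4\,\pi\,\ri\,\frac{d\vartheta_{\alpha\beta}}{d\tau}=\partial_z^2\theta_{\alpha\beta}(z|\tau)\big|_{z=0}=2\,C_1^{(\alpha,\beta)}(\tau)\,,
\]
where $C_1^{(\alpha,\beta)}$ is the coefficient of $z^2$ in the power series of Theorem~\ref{T2}. Reading it off from \eqref{theta234} and dividing,
\[
\frac{d\vartheta_{\alpha\beta}}{d\tau}=\frac{\ri}{\pi}\Big\{\eta+\tfrac{\pi^2}{12}\big(\spin{\beta}\,\vartheta_{\alpha-1,0}^{4}-\spin{\alpha}\,\vartheta_{0,\beta-1}^{4}\big)\Big\}\,\vartheta_{\alpha\beta}\,.
\]
It then remains to take $(\alpha,\beta)=(1,0),(0,0),(0,1)$ — i.e.\ $\vartheta_2,\vartheta_3,\vartheta_4$ respectively — and to reduce the shifted characteristics by \eqref{*}: $\vartheta_{-1,0}=\vartheta_{1,0}=\vartheta_2$, $\vartheta_{0,-1}=\vartheta_{0,1}=\vartheta_4$, $\vartheta_{0,0}=\vartheta_3$, together with $\spin{0}=1$ and $\spin{1}=-1$. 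Carrying this out produces exactly the three $\vartheta$-equations of \eqref{var}.

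In this route the analytic content is already invested in Theorems~\ref{T1} and~\ref{T2}, whose low-order $z$-coefficients \emph{are} the $\tau$-derivatives sought here; the only thing to watch is the bookkeeping of characteristic shifts and signs in \eqref{*}, \eqref{shift}. If one prefers a proof self-contained modulo the Halphen system \eqref{g2g3} — the route hinted at just above the statement — the substantive step becomes instead the solution of a $3\times3$ linear system for the logarithmic derivatives $L_k=\vartheta_k'/\vartheta_k$: one equation, $L_2+L_3+L_4=\tfrac{3\ri}{\pi}\eta$, from log-differentiating $2\,\ded^3=\vartheta_2\vartheta_3\vartheta_4$ with \eqref{ded}; a second from log-differentiating $\vartheta_3^4=\vartheta_2^4+\vartheta_4^4$; a third from differentiating the $g_2$-line of \eqref{g23}, equating with the $g_2$-equation of \eqref{g2g3}, and expressing $g_3$ through $\vartheta$'s by the $g_3$-line of \eqref{g23}. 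The coefficient determinant works out proportional to $(\vartheta_2^4-\vartheta_4^4)\big(2(\vartheta_2^4+\vartheta_4^4)^2+\vartheta_2^4\vartheta_4^4\big)$, which is nonzero off the $\boldsymbol{\Gamma}(1)$-orbit of $\tau=\ri$, so Cramer's rule applies; the solution, reduced once more by Jacobi's identity, is \eqref{var}, and the equations extend to those exceptional points by analyticity. Either way the main obstacle is not conceptual but the routine-but-bulky reduction of $\vartheta$-polynomials modulo $\vartheta_3^4=\vartheta_2^4+\vartheta_4^4$ (and, in the second route, verifying that the Cramer solution collapses to the clean coefficients of \eqref{var}).
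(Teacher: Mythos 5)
Your proposal is correct, and your two routes bracket the paper's own treatment. The paper gives no computation at all: Theorem~\ref{T5} is simply declared a direct corollary of Eqs.~\eqref{g23}--\eqref{g2g3}, which is essentially the linear solve you relegate to your second route --- differentiate the $\vartheta$-expressions of the invariants, feed in the Halphen system, and solve for the logarithmic derivatives $\vartheta_k'/\vartheta_k$; the paper would naturally use the $g_3^{}$-line of \eqref{g23} with the $g_3^{}$-equation of \eqref{g2g3} where you instead use $2\,\ded^3=\vartheta_2\vartheta_3\vartheta_4$ together with \eqref{ded}, and either choice closes the $3\times3$ system. Your determinant $(\vartheta_2^4-\vartheta_4^4)\bigl(2\,\vartheta_2^8+5\,\vartheta_2^4\vartheta_4^4+2\,\vartheta_4^8\bigr)$ is right and vanishes exactly where $\vartheta_2^4/\vartheta_4^4\in\{1,-2,-\tfrac12\}$, i.e.\ on the $\boldsymbol{\Gamma}(1)$-orbit of $\tau=\ri$, so the Cramer-plus-analytic-continuation argument is sound. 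Your primary route --- the heat equation \eqref{heat} at $z=0$ combined with the $z^2$-coefficient in \eqref{theta234} --- is a genuinely different derivation: its analytic content is the classical value of $\theta_k''(0|\tau)$ as an $(\eta,\vartheta)$-polynomial, which the paper obtains from the $\sigma$-expansions \eqref{st} and the branch-point formula \eqref{ee}, not from \eqref{var} itself, so there is no circularity; what it buys is that all three $\vartheta$-lines drop out of one characteristic formula (it reproduces the first line of \eqref{last}), with only the bookkeeping of \eqref{*} left, and your reductions $(1,0),(0,0),(0,1)\mapsto\vartheta_2,\vartheta_3,\vartheta_4$ with $\spin{0}=1$, $\spin{1}=-1$ are correct. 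The $\eta$-line also checks both ways: $\tfrac16\,g_2^{}=\tfrac{\pi^4}{144}\bigl(\vartheta_2^8+\vartheta_3^8+\vartheta_4^8\bigr)$, and the $z^5$-coefficient variant needs exactly one use of \eqref{324}, via $\vartheta_2^8+\vartheta_3^8+\vartheta_4^8=2\bigl(\vartheta_2^8+\vartheta_2^4\vartheta_4^4+\vartheta_4^8\bigr)$.
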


\begin{remark}[historical]\label{R4}
It is less known that Jacobi wrote out a different analog to this
4-dimensional dynamical system but Halphen does not mention this fact.
This result was published by Borchardt in 1857 on the basis of
manuscripts kept after Jacobi's death \cite[{\bf
II}:~pp.~383--398]{jacobi}. Namely, Jacobi introduced the four
variables $(A,B,a,b)$ in terms of Legendre's quantities
$(k,k',\ellK,\ellE)$ and showed that they satisfy the nice monomial
dynamical system (we keep completely to Jacobi's notation in
\cite[p.~386]{jacobi})
\begin{equation}\label{ABab}
{\left\{\vbox to 6.2ex{}\right.}
\begin{alignedat}{5}
\frac{\partial A}{\partial h}&=2\,A^2B\,,&\qquad
\frac{\partial a}{\partial h}&=-16\,b\,A^2,\\
\frac{\partial B}{\partial h}&=b\,A^3\,,&\qquad
\frac{\partial b}{\partial h}&=a\,b\,A^2\,,
\end{alignedat}
\end{equation}
where $h=\frac14\pi\,\ri\,\tau$. Interestingly enough, Jacobi
considered system \eqref{ABab} in a context of the power series for
$\theta$-functions and  noticed \cite[{\bf II}:~p.~390]{jacobi} that
the series would be simple if one extracts the exponential multiplier
\mbox{$\exp\!\big\{\!{-}\frac12 ABz^2\big\}$}. He described
corresponding recurrences for $\theta_k$ \cite[{\bf
II}:~pp.~394--398]{jacobi} and one can readily see that they are
equivalent to the Weierstrass--Halphen differential recurrence
\eqref{D} for $\sigma$-functions. Exhaustive comments to the Jacobi
system and its relation to Eqs.~\eqref{var} can be found in \cite{br3}.
\end{remark}

Before the system \eqref{ABab} was derived, Jacobi also obtained some
its analogs (see \cite[{\bf II}:~p.~176]{jacobi}) and, in particular,
his remarkable differential equation of 3rd order for the
$\vartheta$-series
\begin{equation}\label{Jacobi}
C^4\big(\!\ln
C^3C_{\tau\tau}\big)_\tau^2=16\,C^3C_{\tau\tau}-\pi^2\,,\qquad
C\DEF\vartheta^{\sm2}\,.
\end{equation}
($C$ is Jacobi's notation). In turn, a simple computation shows that
logarithmic derivatives of the $\vartheta$-series also satisfy a
compact differential equation which we shall meet in Sect.~\ref{S9.3}.
The equation and its general solution are as follows:
\begin{equation}\label{halphen}
\big(X_\tau-2\,X^2 \big)X_{\tau\tau\tau}
-\pow{X}{\tau\tau}{2} +16\,X^3 X_{\tau\tau}+4\,
\big(X_\tau-6\,X^2 \big)\pow{X}{\tau}{2}=0\,,
\end{equation}
$$
X=\frac{d}{d\tau}\ln\frac{\vartheta_{k\!}
\mbig(\mfrac{a\,\tau+b}{c\,\tau+d}\mbig)}{\sqrt{c\,\tau+d\,}}\,.
$$
We should also mention that  well-known differential relations on
logarithms of ratios $\vartheta_2\!:\!\vartheta_3\!:\!\vartheta_4$
\cite{weber,tannery}
\begin{equation*}\label{ln}
\frac{d}{d\tau}\ln\frac{\vartheta_2}{\vartheta_3}=
\frac{\ri}{4}\,\pi\,\vartheta_4^4\,,\qquad
\frac{d}{d\tau}\ln\frac{\vartheta_3}{\vartheta_4}=
\frac{\ri}{4}\,\pi\,\vartheta_2^4\,,\qquad
\frac{d}{d\tau}\ln\frac{\vartheta_2}{\vartheta_4}=
\frac{\ri}{4}\,\pi\,\vartheta_3^4
\end{equation*}
(see also  old dissertation \cite{baruch}) and ordinary differential
equation of Chazy \cite{chazy,clarkson}
\begin{equation}\label{chazy}
\pi\,\eta_{\vbox to1.4ex{}\tau\tau\tau}=
12\,\ri\,\big(2\,\eta\,\eta_{\vbox to1.4ex{}\tau\tau}
-3\,\eta_{\vbox to1.4ex{}\tau}{}^{\hspace{-0.4em}2}\big)
\end{equation}
are the direct consequences of the system \eqref{var}.

\begin{remark}[exercise]\label{R5}
If we view the last equation in \eqref{g2g3} as a Riccati equation then
we  get an interesting example of the solvable linear 2nd order \ode.
Coefficient of this equation is proportional to the everywhere
holomorphic in $\Hp$ form $\g2(\tau)$ which is an automorphic one with
respect to group $\boldsymbol{\Gamma}(1)$. Carry out the calculations
and bring the equation into the following form
$$
\Psi''+\frac{\g2(\tau)}{3\pi^2}\,\Psi=0\,.
$$
With use of differential relation \eqref{ded} generalize this equation
to equation
$$
\Psi''+\Mfrac{n+2}{\pi\,\ri}\,\eta(\tau)\,\Psi'-
\Mfrac{n}{6\pi^2}\,\g2(\tau)\,\Psi=0
$$
and prove that
$$
\Psi=\frac{1}{\ded^n(\tau)}\Big(A+B
\Smaller[2]{\ds\int\limits^{\Smaller[1]{\;\tau}}}
\!\ded^{2n}(\tau)\,d\tau\Big)
$$
is its general solution.
\end{remark}

To this section, we note that formulae for multiple differentiating the
$\vartheta,\eta$-constants are given explicitly as coefficients of
series  \eqref{series}--\eqref{t234}. The same coefficients provide the
general expressions for  quantities $\theta^{(n)}(0|\tau)$; relations
between derivatives $\theta'(0|\tau)$, $\theta''(0|\tau)$, \ldots,
$\theta^{(n)}(0|\tau)$ under small $n$ are very often used in the
literature as auxiliary identities \cite{mumford,WW,
bateman,tannery,weber,krause,koenig,hancock,riemann,lawden} (Baruch's
dissertation \cite{baruch} contains a lot of such identities). See also
work \cite{buchstaber} where modular functions like $\eta$,
$\vartheta$, $\vartheta'$, etc appear in the theory of hydrodynamical
chains and differential calculus described above significantly
simplifies computations in this work. Thanks to the fact that group
$\boldsymbol{\Gamma}(1)$ has a lot of interesting and nontrivial
subgroups the number of known differential systems related to the base
one \eqref{var} is far from being  exhausted. Even next  to
$\boldsymbol{\Gamma}(1)$ groups like  $\boldsymbol{\Gamma}_0(N)$
inspire a rich theory. See, for example, recent work \cite{maier}
containing many nice results along these lines and additional
bibliography.

\section{Unification: $\theta,\theta'$-functions with
characteristics\label{S6}}

\noindent In this section we summarize the previous results and other
basic properties of theta-functions and their derivatives in a unified
notation, \ie, in terms of theta-characteristics with use of
$(\alpha,\beta)$-representations. This will enable us primarily to
trivialize and  automate analytic manipulation with theta-functions by
including fundamental operations: shifts by half-periods, modular
transformations, and differential computations. Apart from unification
of formulae this can serve as the basis for further generalization to
the theta-functions of higher genera.

Any object, symmetrical in $\vartheta$-constants, can be written in
$(\alpha,\beta)$-representation. For example branch points \eqref{ee}
or $(\alpha,\beta)$-representation for invariants \eqref{g23} can be
written as follows:
\begin{alignat}{4}
\g2(\tau)&=& \frac{\pi^4}{12}\,&&&\Big\{ \pow{\vartheta}{\alpha0}{8}+
\spin{\alpha{+}\beta}\, \pow{\vartheta}{\!\alpha0}{4} \,
\pow{\vartheta}{0\beta}{4} + \pow{\vartheta}{0\beta}{8}\Big\}\;,
\qquad\quad
(\alpha,\beta)\ne(0,0)\notag\\
\g3(\tau)&={}&\frac{\pi^6}{432}\,&&&\Big\{ 2\,\spin{\beta}\,
\pow{\vartheta}{\!\alpha0}{12} - 3\, \pow{\vartheta}{\alpha0}{4} \,
\pow{\vartheta}{0\beta}{4}\, \big(\spin{\beta}\,
\pow{\vartheta}{0\beta}{4} -\spin{\alpha}\,
\pow{\vartheta}{\!\alpha0}{4} \big)
-2\,\spin{\alpha}\,\pow{\vartheta}{0\beta}{12} \Big\}\;.\notag
\end{alignat}
Other examples are the Jacobi identity
\begin{equation}\label{324}
\vartheta_3^4(\tau)=\vartheta_2^4(\tau)+\vartheta_4^4(\tau)
\end{equation}
and formula $\Dvartheta=2\pi\ded^3$; they have the following
$(\alpha,\beta)$-representation:
\begin{equation}\label{jacobi}
\begin{split}
\vartheta\AB{\alpha}{\beta}^{\,4} &= \mbig(
\spin{\beta}\,\vartheta\AB{\alpha\sm1}{0}^4+
\spin{\alpha}\,\vartheta\AB{0}{\beta\sm1}^4\mbig)
\,\Mfrac{\spin{\alpha\beta}+1}{2}\,,\\
\Dvartheta[\alpha\beta](\tau)
&= \ri^{\beta+1}\,\big(1-\spin{\alpha\beta}\big)\cdot
\pi\,\ded^3(\tau)\,.
\end{split}
\end{equation}
Here and hereafter $\Dvartheta[\alpha\beta](\tau)$ is understood to be
equal to $\Dtheta[\alpha\beta](0|\tau)$.

\subsection{Shifts by half-periods for $\theta$-derivatives.}

In connection with appearance of the object $\Dtheta$, we should
augment the rule \eqref{shift} by involving the fact that algebraic and
differential closedness of $\theta$'s entails a transformation law for
their derivatives. Hence, it is naturally to be expected that any
function $\Dtheta[\alpha\beta](z|\tau)$ with a $z$-argument shifted by
some half-period is expressible through the function $\Dtheta(z|\tau)$
and  other functions $\theta_{1,2,3,4}(z|\tau)$. The ultimate solution
is, however, not a simple differential consequence of formula
\eqref{shift} and is far from being obvious. It should be put to better
use as an independent property.

\begin{theorem}[Transformation law for  $\theta$-derivatives]\label{T6}
Let $\alpha,\beta,m,n$ be integers. Then
\begin{multline*}
\hspace{-0.7em} \Dtheta[\alpha\beta] \Big(z+\Mfrac n2+\Mfrac m2\,\tau
\Big|\tau\Big)= \ri^{\sm m(\beta+n)}_{\mathstrut}\cdot
\re^{\sm\s{\frac14}\pi\ri\, m(4z+m\tau)}_{\mathstrut} \,\Big\{ \big(
\Dtheta(z|\tau)-\pi\,\ri\,m\,\theta_1(z|\tau)\big)\,
\theta\AB{\alpha+m}{\beta+n}(z|\tau)
\\
-\big\langle(\alpha{+}m)\big[{\ts\frac{\beta+n}{2}}\big] \big\rangle
\,\pi\, \vartheta\AB{\alpha+m}{\beta+n}^2 \cdot
\theta\AB{\alpha+m-1}{0}(z|\tau)\, \theta\AB{0}{\beta+n-1}(z|\tau)
\Big\}\,\frac{1}{\theta_1(z|\tau)}\,,
\end{multline*}
where, for closedness of the formula, identity \eqref{*} should be
taken into account.
\end{theorem}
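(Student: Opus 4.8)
The plan is to reduce the statement to the already-established machinery in two stages: first obtain the shift law for $\theta'_{\alpha\beta}$ by differentiating \eqref{shift}, then convert the resulting derivative $\theta'_{\alpha+m,\beta+n}(z|\tau)$ back into the basis $\{\theta_{1,2,3,4},\Dtheta\}$ by means of the system \eqref{X'}. The key point is that \eqref{shift} already gives $\theta_{\alpha\beta}$ at a shifted argument as $\theta_{\alpha+m,\beta+n}(z|\tau)$ times an explicit exponential; applying $\partial_z$ to both sides of \eqref{shift}, the left side becomes $\theta'_{\alpha\beta}(z+\tfrac n2+\tfrac m2\tau|\tau)$ (chain rule with $\partial_z$ of the shifted argument equal to $1$), and on the right side the product rule produces two terms: $\theta'_{\alpha+m,\beta+n}(z|\tau)$ times the exponential, plus $\theta_{\alpha+m,\beta+n}(z|\tau)$ times the $z$-derivative of the exponential factor $\re^{-\frac14\pi\ri\,m(4z+m\tau)}$, which contributes a factor $-\pi\ri\,m$. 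This is exactly the source of the $-\pi\,\ri\,m\,\theta_1(z|\tau)$ correction appearing in the bracket of Theorem~\ref{T6}.

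Next I would express $\theta'_{\alpha+m,\beta+n}(z|\tau)$ itself. Since $\theta_{\alpha+m,\beta+n}=\pm\theta_k$ for a $k\in\{1,2,3,4\}$ determined by the parities of $\alpha+m$ and $\beta+n$ (via \eqref{*}), I would invoke Theorem~\ref{T3}: for $k=1$ one has $\theta_1'=\Dtheta$ outright, and for $k=2,3,4$ the corresponding equation in \eqref{X'} reads $\partial_z\theta_k=(\Dtheta/\theta_1)\theta_k-\pi\vartheta_k^2(\theta_\nu\theta_\mu/\theta_1)$. In the $(\alpha,\beta)$-notation, the right-hand side of all these cases is uniformly captured by
$$
\theta'\AB{\alpha+m}{\beta+n}=\frac{1}{\theta_1}\Big(\Dtheta\,\theta\AB{\alpha+m}{\beta+n}-\big\langle(\alpha{+}m)\big[{\ts\tfrac{\beta+n}{2}}\big]\big\rangle\,\pi\,\vartheta\AB{\alpha+m}{\beta+n}^2\,\theta\AB{\alpha+m-1}{0}\,\theta\AB{0}{\beta+n-1}\Big),
$$
using the product formulae $\theta_\nu\theta_\mu=\pm\theta_{\alpha+m-1,0}\theta_{0,\beta+n-1}$ from the $(\alpha,\beta)$-calculus (cf.\ \eqref{nm} and the identities underlying Theorem~\ref{T1}); the spin symbol $\langle(\alpha{+}m)[\tfrac{\beta+n}{2}]\rangle$ is precisely the sign bookkeeping that makes the formula uniform across $k=1,2,3,4$ and simultaneously vanishes when $\theta_{\alpha+m,\beta+n}=\pm\theta_1$ (so that no spurious second term survives in that case). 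Substituting this into the differentiated \eqref{shift} and collecting the prefactor $(-\ri)^{(\beta+n)m}\cdot(\text{derivative-of-exponential corrections})=\ri^{3m(\beta+n)}\re^{-\frac14\pi\ri\,m(4z+m\tau)}$ (noting $(-\ri)^{(\beta+n)m}=\ri^{3m(\beta+n)}$) yields exactly the claimed formula.

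The main obstacle is not conceptual but bookkeeping: one must verify that the various sign factors — the $(-1)^{\alpha n}$ from \eqref{*}, the $(-\ri)^{(\beta+n)m}$ from \eqref{shift}, the $\pm$ in the product formula $\theta_\nu\theta_\mu=\pm\theta_{\alpha+m-1,0}\theta_{0,\beta+n-1}$, and the $\vartheta$-constant sign in $\vartheta_k^2$ versus $\vartheta\AB{\alpha+m}{\beta+n}^2$ — all combine into the single spin symbol $\langle(\alpha{+}m)[\tfrac{\beta+n}{2}]\rangle$ and the clean phase $\ri^{3m(\beta+n)}$. I would handle this by first checking the four base cases $(\alpha,\beta)\in\{(1,1),(1,0),(0,1),(0,0)\}$ with $(m,n)=(0,0)$ against the explicit equations \eqref{X'}, then propagating by the shift \eqref{*} for general $(m,n)$, at each stage matching the parity of the spin argument $(\alpha{+}m)[\tfrac{\beta+n}{2}]$ to the required sign. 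Since \eqref{*} and \eqref{shift} are stated in the excerpt and \eqref{X'} is Theorem~\ref{T3}, every ingredient is available, and the proof is a finite, if slightly tedious, verification.
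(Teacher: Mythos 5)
Your argument is correct and is essentially the paper's own (very terse) proof: both rest on combining the $z$-differential system \eqref{X} in its $(\alpha,\beta)$-form \eqref{x} with the half-period shift identity — the paper invokes \eqref{theta1ab} while you differentiate \eqref{shift}, which is the same relation read in the opposite direction — and the phase bookkeeping $(-\ri)^{(\beta+n)m}=\ri^{3m(\beta+n)}$ together with the collection of the $-\pi\,\ri\,m\,\theta_1$ correction works exactly as you describe. One cosmetic fix: when $\alpha+m$ and $\beta+n$ are both odd, the second term drops out because $\vartheta\AB{\alpha+m}{\beta+n}^2=\vartheta_{11}^2\equiv0$, not because the spin symbol $\langle(\alpha{+}m)[\frac{\beta+n}{2}]\rangle$ ``vanishes'' — it only takes the values $\pm1$.
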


\begin{proof}
It is a combination of equations \eqref{X} and conversion of any
$\theta_{\alpha\beta}$-function into the function $\theta_1$ by formula
\begin{equation}\label{theta1ab}
\theta_1(z|\tau)=\ri^\alpha\, \theta\AB{\alpha\sm1}{\beta\sm1}\!
\Big(z-\Mfrac{\alpha}{2}\tau-\Mfrac{\beta}{2}\Big|\tau\Big)
\cdot
\re^{\sm\pi\ri\alpha\left(z-\s{\frac14}\alpha \tau\right)}_
{\mathstrut}\,,
\end{equation}
wherein we set $(\alpha,\beta)$ to be integers $(-m,-n)$.
\end{proof}

Taking the limit at $z=0$, which exploits the series expansions
described above, we get a generalization of Jacobi's derivative
formula, \ie, second formula in \eqref{jacobi}.

\begin{corollary}\label{C2}
The general $\Dtheta[\alpha\beta]$-constant, \ie, value of
$\theta'$-function at any half-period,  is expressed through a
$\vartheta$-constant and exponential multiplier\/$:$
\begin{equation}\label{jac}
\Dtheta[\alpha\beta]\Big(\Mfrac n2+\Mfrac m2\,\tau
\Big|\tau\Big) =\ri^{\s 1-(\beta+n)m}\cdot \pi\, \mbig\{
\ri^{\s\beta+n}\big(1-\big\langle(\alpha{+}m)(\beta{+}n)\big\rangle
\big) \cdot\ded^3-m\, \vartheta\AB{\alpha+m}{\beta+n} \mbig\}
\:\re^{\!\sm\s{\frac14}\pi\ri m^2\tau}_{\mathstrut}\,.
\end{equation}
\end{corollary}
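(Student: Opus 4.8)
\noindent The plan is to obtain Corollary~\ref{C2} as the specialization $z\to0$ of Theorem~\ref{T6}, the apparent $0/0$ coming from the factor $1/\theta_1(z|\tau)$ being removed with the canonical power series of Sect.~\ref{S3}. The ingredients I would extract from those series are: $\theta_1(z|\tau)=2\pi\ded^3\,z+O(z^3)$ and $\Dtheta(z|\tau)=2\pi\ded^3+O(z^2)$ from \eqref{theta1}, so that $\theta_1'(0|\tau)=2\pi\ded^3$ and $\Dtheta'(0|\tau)=\theta_1''(0|\tau)=0$; that for $\varepsilon=1$ the function $\theta_{\alpha\beta}(z|\tau)$ is even with $\theta_{\alpha\beta}(z|\tau)=\vartheta_{\alpha\beta}+O(z^2)$ from \eqref{series}; and that for $\varepsilon=0$ one has $\vartheta_{\alpha\beta}=0$ while \eqref{*} gives the functional identity $\theta_{\alpha\beta}(z|\tau)=\pm\theta_1(z|\tau)$. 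Note in addition that the two outer factors $\theta_{\alpha+m-1,0}(z|\tau)$ and $\theta_{0,\beta+n-1}(z|\tau)$ appearing in Theorem~\ref{T6} always carry a vanishing characteristic, hence are even and nonzero at $z=0$.

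First I would dispose of the odd case $\langle(\alpha{+}m)(\beta{+}n)\rangle=-1$: the quadratic term in Theorem~\ref{T6} is annihilated by its prefactor $\vartheta_{\alpha+m,\beta+n}^2=0$, and $\theta_{\alpha+m,\beta+n}(z|\tau)/\theta_1(z|\tau)$ is by \eqref{*} the \emph{constant} $\pm1$; the limit is then read off at once as $\ri^{3m(\beta+n)}(\pm1)\,2\pi\,\ded^3\,\re^{-\frac14\pi\ri m^2\tau}$, which is the $\ded^3$-branch of \eqref{jac} once $\pm1$ is rewritten as a power of $\ri$ using that $\beta+n$ is odd. In the even case $\langle(\alpha{+}m)(\beta{+}n)\rangle=+1$ the braced numerator of Theorem~\ref{T6} does not vanish term by term at $z=0$; its value there equals $\pi\,\vartheta_{\alpha+m,\beta+n}\bigl(2\,\ded^3-\langle(\alpha{+}m)[\tfrac{\beta+n}{2}]\rangle\,\vartheta_{\alpha+m,\beta+n}\,\vartheta_{\alpha+m-1,0}\,\vartheta_{0,\beta+n-1}\bigr)$, and the bracket is $0$ by the generalized Jacobi product identity $2\ded^3=\vartheta_2\vartheta_3\vartheta_4$ (cf.~\eqref{ded}) together with the reduction \eqref{*}. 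So one differentiation in $z$ is required.

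I would then finish by l'H\^opital's rule (equivalently, by matching the $z^1$-coefficients): the limit is the $z$-derivative at $z=0$ of the braced numerator, divided by $\theta_1'(0|\tau)=2\pi\ded^3$. Because $\Dtheta'(0|\tau)=\theta_1''(0|\tau)=0$ and because $\theta_{\alpha+m,\beta+n}$ and the product $\theta_{\alpha+m-1,0}\,\theta_{0,\beta+n-1}$ are even (so their $z$-derivatives vanish at $0$), only the contribution $-\pi\ri m\,\theta_1'(0|\tau)\,\vartheta_{\alpha+m,\beta+n}=-2\pi^2\ri m\,\ded^3\,\vartheta_{\alpha+m,\beta+n}$ survives, and the limit becomes $\ri^{3m(\beta+n)}\bigl(-\pi\ri m\,\vartheta_{\alpha+m,\beta+n}\bigr)\re^{-\frac14\pi\ri m^2\tau}$, i.e.\ the $\vartheta$-branch of \eqref{jac}. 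Merging the two cases into the single formula \eqref{jac} is then pure bookkeeping of powers of $\ri$ via $3m\equiv-m\pmod 4$, together with the reduction $\theta_{\alpha+m,\beta+n}\mapsto\pm\theta_{1,2,3,4}$ of \eqref{*} that makes the statement self-contained. The one genuinely delicate point is keeping the signs and the $\langle\cdot\rangle$-symbols consistent across both parities; all the analytic substance is in the two elementary facts $\theta_1''(0|\tau)=0$ and $2\ded^3=\vartheta_2\vartheta_3\vartheta_4$, which are precisely what collapse the $0/0$ to the stated closed form.
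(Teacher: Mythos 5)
Your proposal is correct and follows essentially the same route as the paper, whose proof of Corollary~\ref{C2} consists precisely of letting $z\to0$ in Theorem~\ref{T6} and resolving the resulting $0/0$ with the series expansions of Sect.~\ref{S3}. Your parity case split, the use of $\theta_1''(0|\tau)=0$ and $2\,\ded^3=\vartheta_2\vartheta_3\vartheta_4$, and the final power-of-$\ri$ bookkeeping merely make explicit the details the paper leaves implicit.
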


Since $\big\langle(\alpha{+}m)(\beta{+}n)\big\rangle$ is equal to
$\pm1$, only one term remains in the right-hand side of \eqref{jac},
\ie, $\vartheta\AB{\alpha+m}{\beta+n}$ or
$\ded^3=\frac12\,\vartheta_2\,\vartheta_3\,\vartheta_4$. For tables of
some particular cases see  \cite[{\bf II}:~p.~256]{tannery}.

\subsection{Modular transformations\label{S6.2}}

Transformations of $\theta$-functions with respect to  modular group
$\big(\begin{smallmatrix}a&b\\c&d\end{smallmatrix}\big)
\in\boldsymbol{\Gamma}(1)$ belong among fundamental properties of
theta-functions and have numerous applications \cite{rankin,apostol}.
Suffice it to mention that corresponding transformation of the series
$\theta_1$ \cite{weil,tannery,weber}:
\begin{equation}\label{ab11}
\theta_1\!\Big(
\Mfrac{z}{c\,\tau+d}\Big|\Mfrac{a\,\tau+b}{c\,\tau+d}\Big)=
\Aleph^3\cdot\sqrt{c\,\tau+d\:}\, \,\re_{\strut}^{\frac{\pi\ri
c\, z^2}{c\,\tau+d}}\,\theta_1(z|\tau)\,,
\end{equation}
where $\Aleph^3$ denotes some eighth root of unity, may turn  a
hyper-convergent series into the never computable one. Hermite
represented the famous multiplier $\Aleph^3$ via the sum of quadratic
Gaussian exponents (it is known that these sums are not easily
computed) and Jacobi's symbol $\big(\frac ab\big)$ \cite[{\bf
I}:~pp.~482--486]{hermite} (see also \cite[pp.~183--193]{krazer},
\cite[{\bf II}:~pp.~57--58]{koenig}, \cite[pp.~124--132]{weber},
\cite[{\bf II}]{tannery}). For this reason, it is interesting that
there is a simpler formula for the modular transformation wherein
multiplier $\Aleph$ is merely an exponent of a rational. Without loss
of generality we may normalize $c$ to be positive: $c>0$.

\begin{theorem}
[The $\boldsymbol{\Gamma}(1)$-transformation law for the general
$\theta$-function]\label{T7} Let $\theta\AB{\alpha}{\beta}$ be the
theta-series with arbitrary integer characteristics \eqref{hermite} and
let $n\in\mathbb{Z}$. Then
\begin{align}
\label{trivial}\theta\AB{\alpha\sm1}{\beta} (z|\tau+n)&=
\ri_{\mathstrut}^{\frac n2 \,(1{-}\alpha^2)}
\cdot \theta\AB{\alpha\sm1}{\beta+n\alpha} (z|\tau)\,,\\
\label{ab}
\theta\AB{\alpha{\s'}\sm1}{\beta{\s'}\sm1}
\Big(\Mfrac{z}{c\,\tau+d}\Big| \Mfrac{a\,\tau+b}{c\,\tau+d}\Big)&=
\boldsymbol{\mathfrak{E}}_{\alpha\beta}\:\Aleph^3\cdot
\sqrt{c\,\tau+d\:}\,\,
\re_{\strut}^{\frac{\pi\ri cz^2}{c\,\tau+d}}\,
\theta\AB{\alpha\sm1}{\beta\sm1}(z|\tau)\,,
\end{align}
where multipliers $\boldsymbol{\mathfrak{E}}_{\alpha\beta}$ and\/
$\Aleph$ depend on $(a,b,c,d)$ as
\begin{align}
\boldsymbol{\mathfrak{E}}_{\alpha\beta}={}&
\exp\,\Mfrac{\pi}{4_{\strut}}\ri\,\mbig\{
2\,\alpha\,(\beta\,b\,c-d+1)-\beta\,c\,(\beta\,a-2)-
\alpha^2\,d\,b\mbig\},\nonumber\\
\label{aleph}
\Aleph\DEF{}& \exp\pi\ri\, \mbig[11]\{
\Mfrac{a-d}{12\,c} -\Mfrac{d}{6}(2c-3)+\Mfrac{c-1}{4}\mathfrak{sign}
(d)-\Mfrac14+\Mfrac1c\cdot
\SUM{k}{\raisebox{-0.03em}{\mbox{\small\textup\textbar}}\mspace{-2.5mu}
{[c\mspace{-0.9mu}/\mspace{-2mu}d]\mspace{-2.5mu}
\raisebox{-0.03em}{\mbox{\small\textup\textbar}}\mspace{-1mu}
+\mspace{-1mu}1}}{{\scriptstyle c-1}}
\Big[\Mfrac{d}{c}k\Big]\,k
\mbig[11]\}
\end{align}
and characteristics $(\alpha,\beta)$, $(\alpha',\beta')$ are related
through the linear transformation
\begin{equation}\label{abAB}
\begin{alignedat}{6}
\alpha'&= &&d\,&\alpha &{}-{}c\,&\beta\,,\qquad\qquad
\alpha&{}=a\,&\alpha'&{}+c\,&\beta'\,,\\
\beta' &=-&&b\,&\alpha &{}+{}a\,&\beta\,,\qquad\qquad
\beta&{}=b\,&\alpha'&{}+d\,&\beta'\,.
\end{alignedat}
\end{equation}
\end{theorem}

\begin{proof}
Formula \eqref{trivial} is an elementary consequence of the series
\eqref{hermite}. Proof of \eqref{ab} consists of two steps. The first
is  to use accurate manipulations/simplifications by Dedekind's sums
\cite{rademacher,apostol} determining the multiplier $\Aleph$ and
entering into the transformation formula for the $\ded$-function:
$$
\ded\Big(\Mfrac{a\,\tau+b}{c\,\tau+d}\Big)=\Aleph\,
\sqrt{c\,\tau+d\,}\,\,\ded(\tau)\,.
$$
Subsequent use of the fact that multiplier for $\theta_1$ in
\eqref{ab11} is a cube of multiplier $\Aleph$ for $\ded$
\cite{tannery,weil} yields formula \eqref{aleph}. The second step
exploits the fact that function $\theta_1$ transforms into itself  and
any of the functions $\theta_{\!\alpha\beta}(z|\tau)$ can be
transformed into the function $\theta_1(z|\tau)$ (and vice versa) by a
half-period shift of its $z$-argument like \eqref{theta1ab}. This gives
the linear transformation between characteristics \eqref{abAB}, and
with it the multiplier $\boldsymbol{\mathfrak{E}}_{\alpha\beta}$.
Characteristics, as appeared in \eqref{ab}, have been chosen in order
that the formula be most  symmetric.
\end{proof}

\begin{remark}\label{R6}
Hermite  gave also \emph{nonlinear} formulae for transformation of
characteristics $(\alpha,\beta)\mapsto (\alpha',\beta')$ \cite[{\bf
I}:~p.~483]{hermite} which are reproduced in subsequent works
\cite{weber,farkas,rauch} (some linear forms can be found in
\cite[p.~183]{krazer}). Somewhat surprising facet is the fact that no
such a self-contained formula seems to have hitherto been presented in
the literature. Ratio of any $\theta$-functions contains no the
multiplier $\Aleph$ and Hermite used this fact to build the functions
$\varphi(\tau)$, $\psi(\tau)$, $\chi(\tau)$ and tables of
transformation between them \cite{hermite,tannery} when constructing
his famous solution to the quintic equation $x^5-x=a$ in terms of
$\varphi$, $\psi$, $\chi$ \cite[p.~10]{hermite}. These functions are in
fact certain $\vartheta$-constants so that their transformations are
consequences of  the $\vartheta$-constant ones.
\end{remark}

\begin{corollary}\label{C3}
The $\boldsymbol{\Gamma}(1)$-transformations for the general
$\vartheta\AB{\alpha}{\beta}$-constants are
\begin{align}
\vartheta\AB{\alpha\sm1}{\beta} (\tau+n)&= \ri_{\mathstrut}^{\s{\frac
n2}(1{-}\alpha^2)} \cdot
\vartheta\AB{\alpha\sm1}{\beta+n\alpha}
(\tau)\,,\notag\\
\vartheta\AB{\alpha{\s'}\sm1}{\beta{\s'}\sm1} \Big(
\Mfrac{a\,\tau+b}{c\,\tau+d}\Big)&=
\ri_{\strut}^{\s{\frac12}\{
2\,\alpha\,(\beta\,b\,c-d+1)-\beta\,c\,(\beta\,a-2)-\alpha^2\,d\,b\}}
\,\Aleph^3 \cdot \sqrt{c\,\tau+d\:}\,
\vartheta\AB{\alpha\sm1}{\beta\sm1}(\tau)\,.\notag
\end{align}
\end{corollary}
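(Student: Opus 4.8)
The plan is to obtain both identities simply by specializing the two transformation formulas of Theorem~\ref{T7} to the central value $z=0$. Since every $\theta\AB{\alpha}{\beta}(z|\tau)$ is an entire function of $z$ --- its defining series \eqref{hermite} converges locally uniformly --- the substitution $z=0$ is legitimate without any limiting argument; moreover $c\,\tau+d\neq0$ for $\tau\in\mathbb{H}^+$ and $(c,d)\in\mathbb{Z}^2\setminus\{(0,0)\}$, so the arguments $z/(c\,\tau+d)$ and $(a\,\tau+b)/(c\,\tau+d)$ remain well defined. Recall $\vartheta\AB{\alpha}{\beta}(\tau)=\theta\AB{\alpha}{\beta}(0|\tau)$.

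First I would put $z=0$ in \eqref{trivial}. The prefactor $\ri^{\frac12 n(1-\alpha^2)}=\ri^{\frac n2(1-\alpha^2)}$ does not involve $z$, so one reads off $\vartheta\AB{\alpha\smin1}{\beta}(\tau+n)=\ri^{\frac n2(1-\alpha^2)}\vartheta\AB{\alpha\smin1}{\beta+n\alpha}(\tau)$ at once. Next I would put $z=0$ in \eqref{ab}: the only $z$-dependent factor on the right apart from $\theta\AB{\alpha\smin1}{\beta\smin1}(z|\tau)$ itself is $\re^{\pi\ri c z^2/(c\,\tau+d)}$, which equals $1$ at $z=0$, while on the left the argument $z/(c\,\tau+d)$ becomes $0$. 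This yields $\vartheta\AB{\alpha'\smin1}{\beta'\smin1}\big((a\,\tau+b)/(c\,\tau+d)\big)=\boldsymbol{\mathfrak{E}}_{\alpha\beta}\,\Aleph^3\sqrt{c\,\tau+d\,}\,\vartheta\AB{\alpha\smin1}{\beta\smin1}(\tau)$, and rewriting $\boldsymbol{\mathfrak{E}}_{\alpha\beta}=\re^{\frac14\pi\ri\{2\alpha(\beta bc-d+1)-\beta c(\beta a-2)-\alpha^2 db\}}$ reproduces the displayed formula verbatim, with $\Aleph$ given by \eqref{aleph} and $(\alpha,\beta)\leftrightarrow(\alpha',\beta')$ related by \eqref{abAB}, both of which are inherited unchanged from Theorem~\ref{T7}.

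There is essentially no obstacle: the whole content sits in Theorem~\ref{T7}, and the corollary is its restriction to $z=0$. The only point deserving a remark is the one already handled above, namely that --- in contrast to Corollary~\ref{C2} for the $\theta'$-constants, where the factor $1/\theta_1(z|\tau)$ forces a series-expansion (l'Hôpital-type) passage to the limit --- here no indeterminate form occurs at $z=0$, so the passage is a plain evaluation.
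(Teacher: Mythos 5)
Your proposal is correct and coincides with the paper's intended argument: the corollary is stated as an immediate consequence of Theorem~\ref{T7}, obtained exactly by evaluating \eqref{trivial} and \eqref{ab} at $z=0$, where the Gaussian factor $\re^{\pi\ri c z^2/(c\,\tau+d)}$ reduces to $1$ and $\vartheta\AB{\alpha}{\beta}(\tau)=\theta\AB{\alpha}{\beta}(0|\tau)$. Your added remark that, unlike Corollary~\ref{C2}, no indeterminate form occurs here is accurate but not needed for the paper's (implicit) proof.
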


The known property for each $\theta_k$-function to transforms into
itself under the group $\boldsymbol{\Gamma}(2)$ is also a consequence
of Theorem~\ref{T7} and formulae \eqref{abAB}.

\begin{corollary}\label{C4}
Let $(m,n,p,q)$ be integers. Then group
$\boldsymbol{\Gamma}(2)\!\ni\!\big(\begin{smallmatrix}2n+1&2m\\2p&2q+1
\end{smallmatrix}\big)=\big(\begin{smallmatrix}a&b\\c&d
\end{smallmatrix}\big)$ is necessary and sufficient for each
function $\theta_k$ to transform into itself. The transformations are
\begin{alignat*}{2}
\theta_1\Big(\Mfrac{z}{c\,\tau+d}\Big|
\Mfrac{a\,\tau+b}{c\,\tau+d}\Big)={}&&\Aleph^3 \cdot
\sqrt{c\,\tau+d\:}\,\, \re_{\strut}^{\!\frac{\pi\ri cz^2}{c\,\tau+d}}\,
\theta_1(z|\tau)\,,\\
\theta_2\Big(\Mfrac{z}{c\,\tau+d}\Big|
\Mfrac{a\,\tau+b}{c\,\tau+d}\Big)={}&&\ri^ {2q(p-1)+p}\,\Aleph^3\cdot
\sqrt{c\,\tau+d\:}\,\, \re_{\strut}^{\!\frac{\pi\ri cz^2}{c\,\tau+d}}\,
\theta_2(z|\tau)\,,
\\
\theta_3\Big(\Mfrac{z}{c\,\tau+d}\Big|
\Mfrac{a\,\tau+b}{c\,\tau+d}\Big)={}&& \ri^{2q(p+1)-m(2n+1)+p}\,
\Aleph^3\cdot \sqrt{c\,\tau+d\:}\,\, \re_{\strut}^{\!\frac{\pi\ri
cz^2}{c\,\tau+d}}\, \theta_3(z|\tau)\,,
\\
\theta_4\Big(\Mfrac{z}{c\,\tau+d}\Big|
\Mfrac{a\,\tau+b}{c\,\tau+d}\Big)={}&& \ri^{2n(m-1)-m}\,\Aleph^3\cdot
\sqrt{c\,\tau+d\:}\,\, \re_{\strut}^{\!\frac{\pi\ri cz^2}{c\,\tau+d}}\,
\theta_4(z|\tau)\,.
\end{alignat*}
\end{corollary}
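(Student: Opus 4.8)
The plan is to read the statement off the general transformation law of Theorem~\ref{T7}. By \eqref{abAB}, the substitution $\tau\mapsto(a\tau+b)/(c\tau+d)$ moves the shifted characteristics appearing in \eqref{ab} according to $\alpha'=d\alpha-c\beta$, $\beta'=-b\alpha+a\beta$; writing $A=\alpha-1$, $B=\beta-1$ and reducing modulo $2$ this becomes the affine map $A'\equiv dA+cB+(c+d+1)$, $B'\equiv bA+aB+(a+b+1)\pmod 2$. The four functions $\theta_{1},\theta_{2},\theta_{3},\theta_{4}$ are $-\theta_{11},\theta_{10},\theta_{00},\theta_{01}$, so in the notation $\AB{A}{B}$ their characteristics exhaust the four classes $(1,1),(1,0),(0,0),(0,1)$ of $(\mathbb{Z}/2\mathbb{Z})^2$; by the reduction rule \eqref{*}, $\theta\AB{\alpha'-1}{\beta'-1}$ is $\pm$ the \emph{same} $\theta_k$ precisely when that class is fixed by the affine map. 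Thus Corollary~\ref{C4} is the assertion that this map fixes each of the four classes if and only if $\big(\begin{smallmatrix}a&b\\c&d\end{smallmatrix}\big)\in\boldsymbol{\Gamma}(2)$.

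For sufficiency I substitute $a=2n+1$, $b=2m$, $c=2p$, $d=2q+1$: the linear part of the map becomes the identity and the shift $(c+d+1,\,a+b+1)$ is $(0,0)$ mod $2$, so every characteristic --- in particular each of the four above --- is preserved mod $2$. Theorem~\ref{T7} then gives, for $c>0$ (the case $c=0$ being \eqref{trivial}), that $\theta_k\big(\tfrac{z}{c\tau+d}\big|\tfrac{a\tau+b}{c\tau+d}\big)$ equals $\theta_k(z|\tau)$ times $\sqrt{c\tau+d}\,\re^{\pi\ri cz^2/(c\tau+d)}$ and a root of unity; the latter is the product of $\Aleph^3$, the character $\boldsymbol{\mathfrak{E}}_{\alpha\beta}$ of \eqref{aleph}, and the sign produced by \eqref{*} when $(\alpha'-1,\beta'-1)$ is reduced back to $\AB{A}{B}$. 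Since $\Aleph$ depends only on $(a,b,c,d)$ it is literally the multiplier of \eqref{ab11}, common to all four $\theta_k$. It then remains to substitute the chosen $(a,b,c,d)$ and the four relevant pairs $(\alpha,\beta)$ into the quadratic exponent of $\boldsymbol{\mathfrak{E}}_{\alpha\beta}$, fold in the sign from \eqref{*}, and reduce modulo $8$, making use of $ad-bc=1$; this gives no extra factor for $\theta_1$ and the exponents $2q(p-1)+p$, $2q(p+1)-m(2n+1)+p$, $2n(m-1)-m$ for $\theta_2,\theta_3,\theta_4$ respectively.

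For necessity, if every $\theta_k$ goes to a scalar multiple of itself then the affine map fixes all four classes. The $A'$-relation at $(0,0)$ and $(1,0)$ forces $c+d+1\equiv0$ and $c+1\equiv1\pmod2$, hence $c\equiv0$, $d\equiv1$; the $B'$-relation at the same two points forces $a+b+1\equiv0$ and $a+1\equiv0\pmod2$, hence $a\equiv1$, $b\equiv0$. So $a,d$ are odd and $b,c$ even, that is $\big(\begin{smallmatrix}a&b\\c&d\end{smallmatrix}\big)\in\boldsymbol{\Gamma}(2)$ and has exactly the displayed shape $\big(\begin{smallmatrix}2n+1&2m\\2p&2q+1\end{smallmatrix}\big)$.

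The one genuinely tedious step is the modulo-$8$ bookkeeping of $\boldsymbol{\mathfrak{E}}_{\alpha\beta}$ together with the \eqref{*}-sign in the sufficiency direction: one must track the three terms $2\alpha(\beta bc-d+1)$, $-\beta c(\beta a-2)$, $-\alpha^2 db$ while $a,d$ run over odd values, $b,c$ over even ones, and $(\alpha,\beta)$ over the four values dictated by \eqref{abAB}. Everything else --- that $\Aleph^3$ is unchanged and common to all $\theta_k$, and the short parity argument for necessity --- is immediate from Theorem~\ref{T7} and \eqref{abAB}.
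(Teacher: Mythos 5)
Your proposal is correct and follows essentially the paper's own route: both apply Theorem~\ref{T7} together with the characteristic law \eqref{abAB} and the reduction \eqref{*}, so that the requirement $\theta_k\mapsto\theta_k$ becomes parity (mod~2) conditions on $(a,b,c,d)$ whose solution is the displayed $\boldsymbol{\Gamma}(2)$ shape, with the $\ri$-power multipliers obtained by evaluating $\boldsymbol{\mathfrak{E}}_{\alpha\beta}$ and the \eqref{*}-sign at those values. Your phrasing via the affine action on characteristics mod~2 is only a repackaging of the paper's ``linear equations for $a,b,c,d$,'' and the mod-8 bookkeeping you defer is exactly the finite check the paper likewise leaves to the displayed intermediate formulas.
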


\begin{proof}
With use of \eqref{abAB} and \eqref{ab} we get
\begin{alignat*}{3}
\theta_2\Big(\Mfrac{z}{c\,\tau+d}\Big|
\Mfrac{a\,\tau+b}{c\,\tau+d}\Big)={}&&\re_{\strut}^
{\s{\frac14}\pi\ri\,(2-d)c}\,\Aleph^3\cdot \sqrt{c\,\tau+d\:}\,\,
\re_{\strut}^{\!\frac{\pi\ri cz^2}{c\,\tau+d}}\,
&&\theta\AB{c-1}{d-1}&(z|\tau)\,,
\\
\theta_3\Big(\Mfrac{z}{c\,\tau+d}\Big|
\Mfrac{a\,\tau+b}{c\,\tau+d}\Big)={}&& \re_{\strut}^{\s{\frac14}\pi\ri
\,\{2(a+c-ad)-ab-cd\}}\, \Aleph^3\cdot \sqrt{c\,\tau+d\:}\,\,
\re_{\strut}^{\!\frac{\pi\ri cz^2}{c\,\tau+d}}\,
&&\theta\AB{a+c-1}{b+d-1}&(z|\tau)\,,
\\
\theta_4\Big(\Mfrac{z}{c\,\tau+d}\Big|
\Mfrac{a\,\tau+b}{c\,\tau+d}\Big)={}&&
\re_{\strut}^{\s{\frac14}\pi\ri\,(2a-ab-2)}\,\Aleph^3\cdot
\sqrt{c\,\tau+d\:}\,\, \re_{\strut}^{\!\frac{\pi\ri cz^2}{c\,\tau+d}}\,
&&\theta\AB{a-1}{b-1}&(z|\tau)\,.
\end{alignat*}

Requiring now $\theta\AB{c-1}{d-1}\simeq\theta\AB{1}{0}=\theta_2$,
$\theta\AB{a+c-1}{b+d-1} \simeq\theta\AB{0}{0}=\theta_3$, and
$\theta\AB{a-1}{b-1}\simeq\theta\AB{0}{1}=\theta_4$, we obtain
\emph{linear} equations for $a,b,c,d$. Their solution yields the matrix
$\big(\begin{smallmatrix}a&b\\c&d
\end{smallmatrix}\big)=\big(\begin{smallmatrix}2n+1&2m\\2p&2q+1
\end{smallmatrix}\big)$.
\end{proof}

Transformation for the fifth basic function $\Dtheta$ follows from a
derivative of \eqref{ab11}:
$$
\Dtheta\!\Big(
\Mfrac{z}{c\,\tau+d}\Big|\Mfrac{a\,\tau+b}{c\,\tau+d}\Big)=
\Aleph^3\cdot\sqrt{c\,\tau+d\:}\, \,\re_{\strut}^{\frac{\pi\ri
cz^2}{c\,\tau+d}}\,\mbig\{(c\,\tau+d)\,\Dtheta(z|\tau)+
2\,\pi\,\ri\,c\,\,z\,\theta_1\!(z|\tau) \mbig\}\,.
$$
(Exercise: with use of Theorem~\ref{T6} derive the
$\boldsymbol{\Gamma}(1)$-transformation law for the general
$\Dtheta[\alpha\beta]$-function.) It is not difficult to see that
general transformation can always be brought to the form
$\theta_k\mapsto\theta_k$ if we involve the inhomogeneous
transformations of argument $z\mapsto\frac{z+s\,\tau+r}{c\,\tau+d}$.

\subsection{Differential equations}

The  next theorem describes completely  differential calculus of the
classical Jacobi $\vartheta,\theta,\theta'$-series in both variables
$z$ and $\tau$.
\begin{theorem}\label{T8}
Jacobi's $\theta_{\alpha\beta}$, $\theta'(z|\tau)$-series
\eqref{hermite} and \eqref{der} with arbitrary integer characteristics
$(\alpha,\beta)$, as functions of variables $z$ and $\tau$, are
differentially closed over the  field of $\eta(\tau)$- and
$\vartheta^2(\tau)$-constants. Corresponding rules for differentiating
are defined by the $(\alpha,\beta)$-representation of $z$-equations
\eqref{X} as
\begin{equation}
\begin{split}\label{x}
\frac{\partial\theta\AB{\alpha}{\beta}}{\partial z}&=
\frac{\Dtheta}{\theta_1} \,\theta\AB{\alpha}{\beta}-
({-}1)^{
\raise-0.02em\hbox{\small$[$}\s{\frac{\beta}{2}}
\raise-0.02em\hbox{\small$]$}
\,\alpha}_{\mathstrut} \, \pi\,\vartheta\AB{\alpha}{\beta}^2\cdot
\frac{\theta\AB{\alpha\sm1}{0}\, \theta\AB{0}{\beta\sm1}}{\theta_1}\,,
\\
\frac{\partial\Dtheta}{\partial z}&=
\frac{\Dtheta{}^2}{\theta_1}-\pi^2\, \vartheta_3^2\,\vartheta_4^2
\cdot \frac{\theta_2^2}{\theta_1}-
4\,\bigg\{\eta+\frac{\pi^2}{12}\big(\vartheta_3^4+ \vartheta_4^4\big)
\bigg\} \cdot\theta_1
\end{split}\hspace{7.45em}
\end{equation}
and  by the $(\alpha,\beta)$-equivalent of $\tau$-equations
\eqref{Dtau} as
\begin{equation}
\begin{split}\label{tau}
\frac{\partial\theta\AB{\alpha}{\beta}}{\partial \tau}&=
\frac{-\ri}{4\pi}\, \frac{\Dtheta{}^2}{\theta_1^2}
\,\theta\AB{\alpha}{\beta}+\frac{\ri}{2}\,
(-1)^{\raise-0.02em\hbox{\small$[$}\s{\frac{\beta}{2}}
\raise-0.02em\hbox{\small$]$}\,\alpha}_{\mathstrut}\,
\vartheta\AB{\alpha}{\beta}^2 \cdot \Dtheta\,
\frac{\theta\AB{\alpha\sm1}{0}\, \theta\AB{0}{\beta\sm1}}{\theta_1^2}
\\
&\==+\frac\ri4\pi\! \left\{\vartheta_3^2\,\vartheta_4^2
\cdot \theta_2^2- \mbig(\vartheta\AB{0}{\beta\sm1}^2\cdot
\theta\AB{\alpha\sm1}{0}^2 + \vartheta\AB{\alpha\sm1}{0}^2\cdot
\theta\AB{0}{\beta\sm1}^2 \mbig)\,\vartheta\AB{\alpha}{\beta}^2
\right\}\frac{\theta\AB{\alpha}{\beta}}{\theta_1^2}\\
&\==+\frac{\ri}{\pi}
\bigg\{\eta+\frac{\pi^2}{12} \big(\vartheta_3^4+
\vartheta_4^4\big)\bigg\}\cdot \theta\AB{\alpha}{\beta}\,,
\\
\frac{\partial \Dtheta}{\partial\tau}&=
\frac{-\ri}{4\pi}\,\frac{\Dtheta{}^3}{\theta_1^2}
+\frac{3\,\ri}{\pi}\bigg\{
\frac{\pi^2}{4}\,\vartheta_3^2\,\vartheta_4^2\cdot
\frac{\theta_2^2}{\theta_1^2} +\eta+
\frac{\pi^2}{12}\big(\vartheta_3^4+\vartheta_4^4\big)
\bigg\}\,\Dtheta \\&
\== - \frac\ri2\pi^2\,\vartheta_2^2\,\vartheta_3^2\,\vartheta_4^2
\cdot\frac{\theta_2\,\theta_3\,\theta_4} {\theta_1^2}\,.
\end{split}%\right.
\end{equation}
The constants $\eta(\tau)$, $\vartheta^2(\tau)$ form a differential
ring $\mathbb{C}_\partial[\eta,\vartheta^2]$ that is defined by the
following system of polynomial \odes\/$:$
\begin{equation}\label{last}
\begin{split}
\frac{d\vartheta\AB{\alpha}{\beta}}{d\tau}&= \frac{\ri}{\pi}\,
\bigg\{
\eta+\frac{\pi^2}{12}\,
\mbig((-1)^\beta\,\vartheta\AB{\alpha\sm1}{0}^4-
(-1)^\alpha\,\vartheta\AB{0}{\beta\sm1}^4\mbig) \bigg\}\,
\vartheta\AB{\alpha}{\beta}\,,\\
\frac{d\eta}{d\tau}&= \frac{\ri}{\pi}\, \bigg\{2\,\eta^2-
\frac{\pi^4}{72} \mbig( \vartheta\AB{\alpha}{0}^8+
(-1)^{\alpha+\beta}\, \vartheta\AB{\alpha}{0}^4
\vartheta\AB{0}{\beta}^4 +\vartheta\AB{0}{\beta}^8\mbig) \bigg\}\,,
\end{split}
\end{equation}
where $(\alpha,\beta)\ne (0,0)$ for the 2nd of these equations.
\end{theorem}

We comment now on  some connections of these dynamical systems with the
classical properties of the theta-series. There are two fundamental
algebraic relations between $\theta$-series, \ie, \eqref{j} and they
are of course compatible with equations \eqref{x} and \eqref{tau}
(proof is a calculation). However these relations are satisfied not
only by $\theta$-series themselves but by solutions of the equations as
well. As before, a simple calculation shows that corresponding
solutions contain three  constants $A$, $B$, $C$ and have the form
\begin{equation}\label{sol}
\begin{split}
\theta\AB{\alpha}{\beta}&=C\,\re^{\pi\ri A (2z+A\tau)}\cdot
\theta\AB{\alpha}{\beta}(z+A\,\tau+B|\tau)\,,\\
\Dtheta&= C\,\re^{\pi\ri A
(2z+A\tau)}\cdot\mbig\{\Dtheta(z+A\,\tau+B|\tau) -
2\,\pi\,\ri\,A\,\theta\AB{1}{1}(z+A\,\tau+B|\tau)\mbig\}\,.
\end{split}
\end{equation}

Another point that should be noticed is the important fact that the
heat equation \eqref{heat} must be treated as a \emph{corollary} of the
above equations, rather than the reverse, because
Eqs.~\eqref{x}--\eqref{tau} are the \emph{ordinary} differential
equations, while \eqref{heat} is an equation in \emph{partial}
derivatives. The heat equation has a lot of solutions having nothing to
do with theta-functions.  In order to extract some special, say
$\theta$-, solutions to this equation we must impose  additional
(periodic, differential, modular, etc) conditions on them. Once this
has been done for $\theta$-functions, we arrive at the \odes\  above,
so that the initial consideration with the heat equation may be dropped
out or `forgotten'. In other words, equations \eqref{x}--\eqref{tau}
should be put to better consider as fundamental differential properties
of theta-functions at all.

The third point we would like to mention here is the fact that modular
transformation considered in Sect.~\ref{S6.2} may be derived as a
consequence of these equations rather than  as an `internal property'
of theta-series themselves. This can be briefly outlined as follows.
Equations \eqref{x}--\eqref{tau} admit automorphisms which can be found
through some linear fractional ansatz. That this ansatz is a linear
fractional one can be determined with use of Lie's symmetries of these
equations. This  does not even use  the fact that solutions to
Eqs.~\eqref{x}--\eqref{tau} are the $\theta$-series. Moreover, the
availability of a discrete automorphism follows from the property that
each function $\theta_k$ satisfies a \emph{common \ode} (see
Sect.~\ref{S9.1}). An analogous property holds for the
$\vartheta$-constant equation \eqref{Jacobi}. By this means one can
find two basic transformations $\tau\mapsto\tau+1$ and $\tau\mapsto
-1/\tau$ generating group $\boldsymbol{\Gamma}(1)$. For lack of space
we omit proofs of these statements but they can be partially
compensated from the procedure of  integration of the equations which
shall be detailed in Sect.~\ref{S9.4}. Good examples of  Lie's
symmetries application to Chazy's equation \eqref{chazy} and many other
Jacobi's `modular/elliptic' equations are presented in  nice works
\cite{clarkson} and \cite{rosati}. As with solutions \eqref{j} and
\eqref{sol} we can write solutions to system \eqref{last} that respect
Jacobi's identity \eqref{324}. These are separate solutions to
Eqs.~\eqref{Jacobi} and \eqref{chazy} and they are of course known:
\begin{equation}\label{JacChaz}
\begin{split}
\vartheta\AB{\alpha}{\beta}&=\frac{1}{\sqrt{c\,\tau+d\,\,}}\cdot
\vartheta\AB{\alpha}{\beta}\Big(\Mfrac{a\,\tau+b}{c\,\tau+d}\Big)\,,
\qquad\quad\,
\qquad\hbox{(Jacobi \cite[{\bf II}:~pp.~186--187]{jacobi})}\\
\eta&=\frac{1}{(c\,\tau+d)^2}\cdot\eta
\Big(\Mfrac{a\,\tau+b}{c\,\tau+d}\Big)+ \frac12\frac{\pi\,\ri
\,c}{c\,\tau+d}\,,\quad\hbox{(Chazy \cite{chazy})}
\end{split}
\end{equation}
where $(a,b,c,d)$ are the integration constants, $a\,d-b\,c=1$, and
right-hand sides in these formulae are the $\vartheta,\eta$-series.

\section{Compatible integrability of
Eqs.~\protect\eqref{X}--\protect\eqref{Dtau}\label{S7}}

\noindent
An important corollary of the preceding section is that \odes\
satisfied by Jacobi's functions have a wider class of solutions  that
are not bound to be the canonical $\theta$-series \eqref{hermite} or
formulae \eqref{sol}. The latter  contain only three free constants
while equations \eqref{X} are of fifth order.  In applications,
variations of system \eqref{X} may occur in their own rights and hence
the quantities $\vartheta$'s, being parameters in Eqs.~\eqref{X}, are
not bound to be the values of $\theta$-series at zero. For the same
reason the quantity $\eta$ must not necessarily be given by any known
expression related to the $\vartheta,\theta$-series, \eg\
\cite{we2,tannery},
$$
\eta(\tau)=-\frac{1}{12}\,\frac{\theta_1\!\!\!\!'''(0|\tau)}
{\Dtheta(0|\tau)}=
-\frac{1}{4}\,\frac{\theta_3\!\!\!\!''(0|\tau)}
{\vartheta_3(\tau)}-
\frac{\pi^2}{12}\big\{\vartheta_2^4(\tau)-\vartheta_4^4(\tau)\big\}
=\cdots\,.
$$
Thus equations \eqref{X}--\eqref{Dtau} may serve as an independent
origin of the $\vartheta,\theta$-functions  at all since equations are
no less fundamental objects than their solutions.

\subsection{The $\theta$-identities as algebraic integrals\label{S7.1}}

Let us assume that $\eta$ and $\vartheta$ are the undetermined
quantities in equations \eqref{x}--\eqref{tau} or in
\eqref{X}--\eqref{Dtau}.

\begin{theorem}\label{T9}
Nonlinear equations \textup{\eqref{X}--\eqref{Dtau}} are compatible if
and only if their coefficients $\eta$, $\vartheta_k$  satisfy the
dynamical system
\begin{equation}\label{intA}
\left\{
\begin{aligned}
\frac{d\vartheta_2}{d\tau}&=
\frac{\ri}{\pi}\bigg\{\eta+\frac{\pi^2}{12}\,
\big(\vartheta_3^4+\vartheta_4^4 \big)\bigg\}\,\vartheta_2\\
\frac{d\vartheta_3}{d\tau}&=
\frac{\ri}{\pi}\bigg\{\eta+\frac{\pi^2}{12}\,
\big(\vartheta_3^4+\vartheta_4^4
-3\bB^4\,\vartheta_4^4\big)\bigg\}\,
\vartheta_3\\
\frac{d\vartheta_4}{d\tau}&=
\frac{\ri}{\pi}\bigg\{\eta+\frac{\pi^2}{12}\,
\big(\vartheta_3^4+\vartheta_4^4
-3\bA^4\,\vartheta_3^4\big)\bigg\}\,\vartheta_4
\\
\frac{d\eta}{d\tau}&=\frac{\ri}{\pi}\,2\,\eta^2-
\frac{\pi^3}{72}\,\ri\,\mbig\{\vartheta_3^8+
\big(9\bA^4\bB^4-6\bA^4-6\bB^4+2\big)\,\vartheta_3^4\,\vartheta_4^4+
\vartheta_4^8 \mbig\}\,,
\end{aligned}\right.
\end{equation}
where constants $\bA^4$ and $\bB^4$ are the algebraic $($rational\/$)$
integrals of the systems  \textup{\eqref{X}--\eqref{Dtau}}\/$:$
\begin{equation}\label{A12}
\bA^4\cdot\vartheta_3^2\,\theta_1^2=
\vartheta_2^2\,\theta_4^2-\vartheta_4^2\,\theta_2^2\,,\qquad
\bB^4\cdot\vartheta_4^2\,\theta_1^2=
\vartheta_2^2\,\theta_3^2-\vartheta_3^2\,\theta_2^2\,.
\end{equation}
The three functions $(\vartheta_3, \,\vartheta_4,\, \eta)$ are
differentially closed.
\end{theorem}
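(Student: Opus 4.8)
The plan is to read compatibility of the overdetermined system \eqref{x}--\eqref{tau} as a Frobenius-type condition: regarding $\theta_{1,2,3,4}(z|\tau),\Dtheta(z|\tau)$ as the dependent variables and $\eta(\tau),\vartheta_{2,3,4}(\tau)$ as undetermined coefficients, require that the $z$-flow \eqref{x} and the $\tau$-flow \eqref{tau} commute, i.e. $\partial_\tau(\partial_z u)=\partial_z(\partial_\tau u)$ for every $u\in\{\theta_1,\theta_2,\theta_3,\theta_4,\Dtheta\}$. Everything then reduces to two bookkeeping tasks: identifying the conserved quantities of the $z$-flow, and extracting the $\tau$-evolution of the coefficients forced by commutativity.

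First I would dispose of the integral part using the $z$-flow alone. Putting $P\DEF\vartheta_2^2\theta_4^2-\vartheta_4^2\theta_2^2$ and differentiating along \eqref{x}, the term carrying $\vartheta_2^2\vartheta_4^2-\vartheta_4^2\vartheta_2^2$ cancels and one is left with
\[
\partial_z P=\frac{2\,\Dtheta}{\theta_1}\,P,\qquad
\partial_z\!\big(\vartheta_3^2\,\theta_1^2\big)=\frac{2\,\Dtheta}{\theta_1}\,\vartheta_3^2\,\theta_1^2\,,
\]
so $\bA^4\DEF P/(\vartheta_3^2\theta_1^2)$ is a first integral of \eqref{x}; the same computation with $Q\DEF\vartheta_2^2\theta_3^2-\vartheta_3^2\theta_2^2$ gives the second integral $\bB^4\DEF Q/(\vartheta_4^2\theta_1^2)$. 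These are precisely the quantities \eqref{A12}, and for the genuine Jacobi series they take the value $\bA^4=\bB^4=1$ by \eqref{j}.

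The core is then the cross-derivative check. On the left of $\partial_\tau(\partial_z u)=\partial_z(\partial_\tau u)$ I substitute $\partial_z u$ from \eqref{x} and apply $\partial_\tau$, using \eqref{tau} for $\partial_\tau\theta_j,\partial_\tau\Dtheta$ and keeping $d\vartheta_k/d\tau,\ d\eta/d\tau$ symbolic; on the right I substitute $\partial_\tau u$ from \eqref{tau} and apply $\partial_z$, using \eqref{x} for $\partial_z\theta_j,\partial_z\Dtheta$. Equating gives, for each $u$, a rational identity in $\theta_1,\dots,\theta_4,\Dtheta$ whose coefficients are polynomial in $\vartheta_k,\eta$ and linear in the unknowns $d\vartheta_k/d\tau,\ d\eta/d\tau$. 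After reducing modulo the polynomial consequences of \eqref{x} — in particular replacing $P$ by $\bA^4\vartheta_3^2\theta_1^2$ and $Q$ by $\bB^4\vartheta_4^2\theta_1^2$ — and matching coefficients of the surviving independent monomials, the vanishing conditions split into two families: those pinning $d\vartheta_2/d\tau,d\vartheta_3/d\tau,d\vartheta_4/d\tau,d\eta/d\tau$ to be exactly the right-hand sides of \eqref{intA}, with $\bA^4,\bB^4$ entering as the coefficients of that system, and those forcing $d\bA^4/d\tau=d\bB^4/d\tau=0$, so $\bA^4,\bB^4$ are genuine constants along the $\tau$-flow as well. Conversely, sufficiency is a direct back-substitution: if $\vartheta_k,\eta$ solve \eqref{intA} with $\bA^4,\bB^4$ constant, the same cross-derivative expressions vanish identically, so \eqref{x}--\eqref{tau} is a compatible (Frobenius-integrable) overdetermined system. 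The closing assertion is then immediate by inspection of \eqref{intA}: the equations for $\vartheta_3,\vartheta_4,\eta$ involve only $\vartheta_3,\vartheta_4,\eta$ (and the constants $\bA^4,\bB^4$), $\vartheta_2$ having dropped out, so this triple is differentially closed, while the $\vartheta_2$-equation is linear in $\vartheta_2$ and merely recovers $\vartheta_2$ afterwards.

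The main obstacle I anticipate is purely computational bookkeeping: since \eqref{tau} is quadratic and cubic in $\Dtheta$ and rational in $\theta_1$, the two sides $\partial_\tau(\partial_z u)$ and $\partial_z(\partial_\tau u)$ are bulky, and one must systematically use \eqref{x} to eliminate every $\partial_z\theta_j$ and the definitions \eqref{A12} to collapse the $P,Q$-combinations before the coefficient matching becomes transparent. It should be economical to run the computation for $\theta_2$ and $\Dtheta$ first — these already produce the $\vartheta_2$- and $\eta$-equations together with both integral conditions — and to obtain the $\vartheta_3,\vartheta_4$-equations by cycling the triple $(k,\nu,\mu)$ through $\{(2,3,4),(3,4,2),(4,2,3)\}$ as in \eqref{nm}. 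As a sanity check, the specialization $\bA^4=\bB^4=1$ must reproduce Theorem~\ref{T5} and, via \eqref{heat}, the relation between the two flows that was used to derive \eqref{tau} in the first place.
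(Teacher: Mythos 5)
Your proposal is correct and follows essentially the same route as the paper: the paper's proof likewise imposes the cross-derivative (compatibility) condition $\theta_{z\tau}=\theta_{\tau z}$ on \eqref{x}--\eqref{tau}, obtaining both the coefficient system \eqref{intA} and the algebraic relations \eqref{A12}, and then checks directly that $\bA$, $\bB$ have vanishing $z$- and $\tau$-derivatives. Your additional observations (the ratios in \eqref{A12} being first integrals of the $z$-flow, and the $(\vartheta_3,\vartheta_4,\eta)$-closedness read off from \eqref{intA}) are just a more explicit organization of the same straightforward-check argument.
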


\begin{proof}
Considering compatibility condition $\theta_{z\tau}=\theta_{\tau z}$ of
systems \eqref{X} and \eqref{Dtau}, we obtain not only certain
restrictions on coefficients $\eta$, $\vartheta$ but also algebraic
relations between $\theta$'s. The straightforward check of \eqref{A12}
shows that $\bA,\bB(\theta;\vartheta)$ are the arbitrary constants
indeed:
$$
\frac{\partial\bA}{\partial z}=
\frac{\partial\bB}{\partial z}\equiv 0\,,\qquad
\frac{\partial\bA}{\partial \tau}=
\frac{\partial\bB}{\partial \tau} \equiv 0
$$
so that  relations \eqref{A12} do determine two independent algebraic
integrals.
\end{proof}

Relations \eqref{A12} generalize the well-known quadratic identities
between  canonical $\theta$-series
\begin{equation}\label{sign}
\mathfrak{sign}(\nu-\mu)\cdot\vartheta_k^2\,\theta_1^2=
\vartheta_\mu^2\,\theta_\nu^2-\vartheta_\nu^2\,\theta_\mu^2 \qquad
(k=2,3,4)
\end{equation}
only two of which, say \eqref{j}, are  independent ones
\cite{weber,mumford} since  $\vartheta$-constants satisfy the Jacobi
identity \eqref{324}. This suggests that there exists one more
integral; this is indeed the case.

\begin{theorem}\label{T10}
The system  \eqref{intA} has  the algebraic $($rational\/$)$ integral
${\boldsymbol{\mathfrak{A}}}^4(\vartheta)$\/$:$
\begin{equation}\label{jacobiA}
{\boldsymbol{\mathfrak{A}}}^4\,\vartheta_2^4=
\bA^4\vartheta_3^4-\bB^4\vartheta_4^4
\qquad \hence\qquad\frac{d}{d\tau}{\boldsymbol{\mathfrak{A}}}
\equiv 0.
\end{equation}
\end{theorem}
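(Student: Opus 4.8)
The plan is to exhibit $\boldsymbol{\mathfrak{A}}^4$ as a quotient of two quantities that obey one and the same linear homogeneous first-order equation $y_\tau=\lambda(\tau)\,y$; such a quotient is automatically $\tau$-independent. Concretely, I will show that the combination $\bA^4\vartheta_3^4-\bB^4\vartheta_4^4$ satisfies exactly the same logarithmic ODE as $\vartheta_2^4$ does under the flow \eqref{intA}. This is the natural generalization of the remark preceding the theorem: in the canonical situation $\bA=\bB=1$ the Jacobi identity \eqref{324} collapses $\bA^4\vartheta_3^4-\bB^4\vartheta_4^4$ into $\vartheta_2^4$ itself, so $\boldsymbol{\mathfrak{A}}\equiv1$ there, and the present statement asserts that the corresponding ratio stays constant for arbitrary values of the integrals $\bA,\bB$.

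First I would raise the three $\vartheta$-equations of \eqref{intA} to the fourth power, that is, pass to logarithmic derivatives:
\[
\frac{(\vartheta_2^4)_\tau}{\vartheta_2^4}=\frac{4\ri}{\pi}\Big\{\eta+\tfrac{\pi^2}{12}\big(\vartheta_3^4+\vartheta_4^4\big)\Big\},\qquad
\frac{(\vartheta_3^4)_\tau}{\vartheta_3^4}=\frac{4\ri}{\pi}\Big\{\eta+\tfrac{\pi^2}{12}\big(\vartheta_3^4+\vartheta_4^4-3\bB^4\vartheta_4^4\big)\Big\},
\]
\[
\frac{(\vartheta_4^4)_\tau}{\vartheta_4^4}=\frac{4\ri}{\pi}\Big\{\eta+\tfrac{\pi^2}{12}\big(\vartheta_3^4+\vartheta_4^4-3\bA^4\vartheta_3^4\big)\Big\}.
\]
Then, using that $\bA$ and $\bB$ are genuine constants of motion of \eqref{x}--\eqref{tau} by Theorem~\ref{T9} (so they may be pulled through $d/d\tau$), I would differentiate $\bA^4\vartheta_3^4-\bB^4\vartheta_4^4$ and collect terms. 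The ``generic'' contributions assemble into $\tfrac{4\ri}{\pi}\{\eta+\tfrac{\pi^2}{12}(\vartheta_3^4+\vartheta_4^4)\}\,(\bA^4\vartheta_3^4-\bB^4\vartheta_4^4)$, whereas the ``extra'' pieces originating from $-3\bB^4\vartheta_4^4$ and $-3\bA^4\vartheta_3^4$ produce $\mp\,\pi\ri\,\bA^4\bB^4\,\vartheta_3^4\vartheta_4^4$, which cancel. Hence
\[
\frac{d}{d\tau}\big(\bA^4\vartheta_3^4-\bB^4\vartheta_4^4\big)=\frac{4\ri}{\pi}\Big\{\eta+\tfrac{\pi^2}{12}\big(\vartheta_3^4+\vartheta_4^4\big)\Big\}\big(\bA^4\vartheta_3^4-\bB^4\vartheta_4^4\big),
\]
which is precisely the equation obeyed by $\vartheta_2^4$. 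Consequently the ratio $\boldsymbol{\mathfrak{A}}^4=(\bA^4\vartheta_3^4-\bB^4\vartheta_4^4)/\vartheta_2^4$ has vanishing $\tau$-derivative, so $\tfrac{d}{d\tau}\boldsymbol{\mathfrak{A}}\equiv0$, as claimed.

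I do not anticipate a real obstacle: the whole argument is a short differential computation. The one point that deserves care is the cancellation of the cross-terms $\bA^4\bB^4\vartheta_3^4\vartheta_4^4$, which rests on nothing beyond $\bA^4\bB^4=\bB^4\bA^4$ together with the constancy of $\bA,\bB$ supplied by Theorem~\ref{T9}. An equivalent bookkeeping, if preferred, is to read off from \eqref{intA} the relations $\tfrac{d}{d\tau}\ln(\vartheta_3^4/\vartheta_2^4)=-\pi\ri\,\bB^4\vartheta_4^4$ and $\tfrac{d}{d\tau}\ln(\vartheta_4^4/\vartheta_2^4)=-\pi\ri\,\bA^4\vartheta_3^4$, and then differentiate $\bA^4(\vartheta_3^4/\vartheta_2^4)-\bB^4(\vartheta_4^4/\vartheta_2^4)$ directly, obtaining $-\pi\ri\,\bA^4\bB^4(\vartheta_3^4\vartheta_4^4/\vartheta_2^4)+\pi\ri\,\bB^4\bA^4(\vartheta_3^4\vartheta_4^4/\vartheta_2^4)=0$.
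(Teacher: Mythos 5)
Your computation is correct and is essentially the verification the paper intends (Theorem~\ref{T10} is stated without a written proof, the integral being checked by direct differentiation along \eqref{intA}, where $\bA,\bB$ are constant parameters): both $\vartheta_2^4$ and $\bA^4\vartheta_3^4-\bB^4\vartheta_4^4$ satisfy $y_\tau=\frac{4\ri}{\pi}\{\eta+\frac{\pi^2}{12}(\vartheta_3^4+\vartheta_4^4)\}\,y$ after the $\bA^4\bB^4\vartheta_3^4\vartheta_4^4$ cross-terms cancel, so their ratio $\boldsymbol{\mathfrak{A}}^4$ is constant. Nothing further is needed.
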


Integrals \eqref{A12} and \eqref{jacobiA}, as generalizations of the
famous Jacobi relations \eqref{sign} and \eqref{324}, mean that the
various polynomial $\theta$-identities, \eg, \eqref{sign}, are the
additional constraints to the basic equations \eqref{X}--\eqref{Dtau}.
We do not dwell on degenerations of system \eqref{intA} into elementary
functions and consider only a generic situation describing a
non-canonical version of $\theta$-functions. The quantities $\bA$,
$\bB$ define initial conditions to Eqs.~\eqref{X}--\eqref{Dtau} and are
parameters for  system \eqref{intA}.

\subsection{Canonical $\theta$-series and elliptic
functions\label{S7.2}}

Let us explain how reduction to the canonical case of
Jacobi--Weierstrass is performed. This procedure discloses some
interesting facts.

Put $\bA=\bB=1$. Then, as it follows from \eqref{intA}, function $\eta$
satisfies the Chazy equation \eqref{chazy} and functions
$\vartheta_{3,4}$ do the Jacobi equation \eqref{Jacobi}. Integral
\eqref{jacobiA} still remains  to be free. Putting further
${\boldsymbol{\mathfrak{A}}}=1$, we can rewrite equations/identities
\eqref{intA} into the symmetrical form \eqref{var}. This procedure,
however, changes not only the structure of equations but their
algebraic integral as well.

\begin{proposition}\label{P1}
Algebraic integral ${\boldsymbol{\mathfrak{A}}}(\vartheta)$ of
symmetrical equations \eqref{var} has the form
\begin{equation}\label{A3}
({\boldsymbol{\mathfrak{A}}}^4-1)\cdot
\vartheta_2^4\,\vartheta_3^4\,\vartheta_4^4=
(\vartheta_3^4-\vartheta_2^4-\vartheta_4^4)^3\,.
\end{equation}
\end{proposition}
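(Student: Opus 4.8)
The plan is to prove \eqref{A3} by showing directly that the rational function $(\vartheta_3^4-\vartheta_2^4-\vartheta_4^4)^3\big/(\vartheta_2^4\vartheta_3^4\vartheta_4^4)$ is a first integral of the canonical system \eqref{var}, and then to identify its value with $\boldsymbol{\mathfrak{A}}^4-1$ through the normalisation built into the reduction \eqref{intA}$\to$\eqref{var} of Sect.~\ref{S7.2}. The integrability check is the heart of the matter, but it is essentially a one-line cancellation. Put $a=\vartheta_2^4$, $b=\vartheta_3^4$, $c=\vartheta_4^4$, and let $\Delta\DEF b-a-c$ denote the ``Jacobi defect'', which vanishes identically on the genuine $\vartheta$-constants by \eqref{324}. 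Taking logarithmic derivatives of the first three equations of \eqref{var} and multiplying by $4$ gives
\[
\frac1a\frac{da}{d\tau}=\frac{4\,\ri}{\pi}\,\eta+\frac{\pi\,\ri}{3}(b+c),\qquad
\frac1b\frac{db}{d\tau}=\frac{4\,\ri}{\pi}\,\eta+\frac{\pi\,\ri}{3}(a-c),\qquad
\frac1c\frac{dc}{d\tau}=\frac{4\,\ri}{\pi}\,\eta-\frac{\pi\,\ri}{3}(a+b).
\]

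Next I would compute $\Delta_\tau=b_\tau-a_\tau-c_\tau$. The $\eta$-terms assemble into $\tfrac{4\,\ri}{\pi}\eta\,(b-a-c)=\tfrac{4\,\ri}{\pi}\eta\,\Delta$, while the remaining contribution is $\tfrac{\pi\,\ri}{3}\big[b(a-c)-a(b+c)+c(a+b)\big]$, whose bracket vanishes identically. Hence $\Delta_\tau=\tfrac{4\,\ri}{\pi}\eta\,\Delta$. Summing the three logarithmic derivatives above, the $\pi^2$-terms again cancel, so $(\ln abc)_\tau=\tfrac{12\,\ri}{\pi}\eta$; therefore $(\ln\Delta^3)_\tau=(\ln abc)_\tau$ and $\Delta^3/(abc)$ is constant along every trajectory of \eqref{var}. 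That is the relation \eqref{A3}. (The $\eta$-equation of \eqref{var} is not even used; equivalently, \eqref{ded} gives $\tfrac{4\,\ri}{\pi}\eta=4\,(\ln\ded)_\tau$, so $\Delta/\ded^4$ is constant, and with $2\,\ded^3=\vartheta_2\vartheta_3\vartheta_4$ this is the same assertion.)

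It remains to explain why the constant is $\boldsymbol{\mathfrak{A}}^4-1$ rather than merely ``some constant'', and this is a matter of normalisation that I would settle by tracking the two-step passage of Sect.~\ref{S7.2}: first set $\bA=\bB=1$ in \eqref{intA}, for which $\boldsymbol{\mathfrak{A}}^4=(\vartheta_3^4-\vartheta_4^4)/\vartheta_2^4$ is still a genuine first integral (the ``raw'' form \eqref{jacobiA}, as one checks by the same $\ln$-derivative computation), and then rewrite the $\vartheta_3,\vartheta_4,\eta$ right-hand sides by means of this integral \emph{before} specialising $\boldsymbol{\mathfrak{A}}=1$ to arrive at the polynomial system \eqref{var}. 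As the text stresses, this rewriting changes the integral: one verifies at once that $\Delta/a$ is \emph{not} constant along \eqref{var}, so the raw form \eqref{jacobiA} does not survive, whereas $\Delta^3/(abc)$ does, and the factor is pinned down by requiring that the canonical limit $\Delta\to0$ correspond to $\boldsymbol{\mathfrak{A}}\to1$. I expect this bookkeeping to be the only delicate part of the proof — keeping straight which algebraic relations among $\bA$, $\bB$, $\boldsymbol{\mathfrak{A}}$ and the $\vartheta$'s are invoked at each stage, and remembering that \eqref{var} is to be read as a global polynomial vector field and not as the restriction of \eqref{intA} to the locus of Jacobi's identity \eqref{324}; the integrability verification proper is entirely routine.
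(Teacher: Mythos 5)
Your verification is correct and is exactly the calculation the paper has in mind: the paper states Proposition~\ref{P1} without a written proof, the implicit argument being the direct check that $(\vartheta_3^4-\vartheta_2^4-\vartheta_4^4)^3/(\vartheta_2^4\vartheta_3^4\vartheta_4^4)$ is conserved by the flow \eqref{var}, which your logarithmic-derivative cancellations $\Delta_\tau=\tfrac{4\ri}{\pi}\eta\,\Delta$ and $(\ln\vartheta_2^4\vartheta_3^4\vartheta_4^4)_\tau=\tfrac{12\ri}{\pi}\eta$ establish cleanly. Your closing normalisation remarks (that the raw integral \eqref{jacobiA} does not survive the passage to \eqref{var}, and that writing the constant as $\boldsymbol{\mathfrak{A}}^4-1$ is fixed by the canonical limit $\Delta\to0$, $\boldsymbol{\mathfrak{A}}\to1$) agree with the paper's discussion in Sect.~\ref{S7.2} and Remark~\ref{R7}.
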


\begin{remark}\label{R7}
This identity should be treated as a correct form of the `complete
Jacobi identity' if determining \odes\  for quantities $\vartheta$,
$\eta$ have been used in  the symmetrized form \eqref{var}. It is
particularly remarkable that if we consider algebraic integrals
\eqref{jacobiA} and \eqref{A3} as algebraic curves in projective
coordinates $\vartheta_2:\vartheta_3:\vartheta_4$, then  we  find that
curve \eqref{jacobiA} has genus three while \eqref{A3} is a curve of
genus nineteen! In addition to this complication under
${\boldsymbol{\mathfrak{A}}}\ne1$, none of the functions $\eta$,
$\vartheta_{2,3,4}$, or logarithmic derivatives $\ln_{\tau}\!\vartheta$
satisfies any equation of 3rd order, as it was for equations
\eqref{Jacobi}--\eqref{halphen}. These assertions can be proved with
use of polynomial Gr\"obner bases techniques over variables
$\eta,\vartheta,\dot\eta,\dot\vartheta,\ldots$ \cite{Cox} but we omit
complete proofs for reasons of space. Broadly speaking, we lose a
differential closedness of the three functions $\vartheta_3$,
$\vartheta_4$, and $\eta$.
\end{remark}

From the aforementioned it appears that we must do away with the rules
of differentiations computations in the symmetrical form \eqref{var},
\eqref{last} and redefine them according to Eqs.~\eqref{intA} under
$\bA=\bB=1$. Some heuristic arguments lead to the following formulae.

\begin{proposition}
Integrable rules of differentiating the $\vartheta,\eta$-constants are
as follows. The $\eta$-derivative reads
\begin{align*}
\frac{d\eta}{d\tau}&=\frac{\ri}{\pi}\,\bigg\{2\,\eta^2-
\frac{\pi^4}{72}\,\mbig(\vartheta_3^8-
\vartheta_3^4\,\vartheta_4^4+
\vartheta_4^8 \mbig)\bigg\}\\
\intertext{and $\vartheta$-constants are differentiated as}
\frac{d\vartheta\AB{\alpha}{\beta}}{d\tau}&= \frac{\ri}{\pi}\,
\bigg\{
\eta+\frac{\pi^2}{24}\,
\mbig{[}
\big(2-3\spin{\alpha}-3\spin{\beta}\big)\,\vartheta_4^4-
\big(1-3\spin{\beta}\big)\,\vartheta_3^4\mbig] \bigg\}\,
\vartheta\AB{\alpha}{\beta}\\
\intertext{or, equivalently,}
\frac{d\vartheta_k}{d\tau}&= \frac{\ri}{\pi}\,
\bigg\{
\eta+\frac{\pi^2}{24}\,
\mbig{[}2\,(3\,k^2-18\,k+25)\,\vartheta_4^4-
(3\,k^2-15\,k+16)\,\vartheta_3^4\mbig] \bigg\}\,
\vartheta_k
\end{align*}
under $k=1,2,3,4$ $($recall $\vartheta_1\equiv0$\/$)$ and, as before,
arbitrary integral $(\alpha,\beta)$.
\end{proposition}

Let us denote  $\boldsymbol{\mathrm{P}}\DEF\theta_2^2/\theta_1^2$. Then
we derive that the following identity holds:
\begin{equation}\label{wp}
\pow{\boldsymbol{\mathrm{P}}}{\!\!z}{2}=4\,\pi^2
\big(\vartheta_4^2\cdot
\boldsymbol{\mathrm{P}}+\bA^4\vartheta_3^2\big)
\big(\vartheta_3^2\cdot
\boldsymbol{\mathrm{P}}+\bB^4\vartheta_4^2\big)\,
\boldsymbol{\mathrm{P}}\,.
\end{equation}
Therefore $\boldsymbol{\mathrm{P}}$ is expressible in terms of
Weierstrass' $\wp$-function which is proportional to the ratio of
Jacobi's $\theta$-series by  formula \eqref{wp2}:
$$
\wp(2z|\btau)=\frac{\pi^2}{12}
\bigg\{\vartheta_3^4(\btau)+\vartheta_4^4(\btau)+
3\,\vartheta_3^2(\btau)\,
\vartheta_4^2(\btau)\,
\frac{\theta_2^2(z|\btau)}{\theta_1^2(z|\btau)}\bigg\}.
$$
From this point on we shall use notation $\vartheta_{2,3,4}(\btau)$,
$\theta_k(z|\btau)$ for explicit pointing out the canonical $\vartheta,
\theta$-series, their modulus $\btau$, and argument $z$. The same
symbols without arguments will denote dynamical variables entering into
our \odes. Bringing equation \eqref{wp} into the form of Weierstrass'
cubic and applying the standard technique \cite{hancock,WW}, we obtain
that elliptic modulus $\btau$ for equation \eqref{wp} is determined as
a root of the transcendental equation (we recall \eqref{Klein})
\begin{equation}\label{J}
J(\btau)=\frac{1}{54}
\frac{\big({\boldsymbol{\mathfrak{A}}}^8\vartheta_2^8
+\bA^8\vartheta_3^8+\bB^8\vartheta_4^8\big)^3}
{{\boldsymbol{\mathfrak{A}}}^8\bA^8\bB^8\cdot
\vartheta_2^8\,\vartheta_3^8\,\vartheta_4^8}\,.
\end{equation}
Here, for symmetry, we have used integral \eqref{jacobiA}. Assuming for
the moment that $\btau$ has been determined, one derives that the
following formula for the ratio $\boldsymbol{\mathrm{P}}$ must exist:
\begin{equation*}
\begin{split}
\frac{\theta_2^2}{\theta_1^2}&=
\frac{\vartheta_3^4(\btau)+\vartheta_4^4(\btau)}
{3\,\vartheta_3^2\,\vartheta_4^2}
-\frac{\bA^4\vartheta_3^4+\bB^4\vartheta_4^4}
{3\,\vartheta_3^2\,\vartheta_4^2} +
\frac{\vartheta_3^2(\btau)\,\vartheta_4^2(\btau)}
{\vartheta_3^2\,\vartheta_4^2}\cdot
\frac{\theta_2^2(z+z_{\s0}|\btau)}{\theta_1^2(z+z_{\s0}|\btau)}
\\
&=\frac{4}{\pi^2}\,\frac{\wp\mbig[1](2\,(z+z_{\s0})|\btau\mbig[1])}
{\vartheta_3^2\vartheta_4^2}-
\frac{\bA^4}{3}\,\frac{\vartheta_3^2}{\vartheta_4^2}-
\frac{\bB^4}{3}\,\frac{\vartheta_4^2}{\vartheta_3^2}\,.
\end{split}
\end{equation*}
Analogous formulae can be obtained for the quotients
$\theta_k/\theta_j$ without squares. It is not difficult to see that
such a variation would lead to the Jacobi elliptic functions
$\mathrm{sn}\sim \theta_1/\theta_4$, etc:
\begin{equation}\label{14}
\bigg(\frac{\theta_1}{\theta_4}\bigg)_{\!\!\!z}^{\!\!\!2} =\pi^2
\bigg\{\bA^4\vartheta_3^2\cdot
\mbig[7](\frac{\theta_1}{\theta_4}\mbig[7])^{\!\!\!2}
-\vartheta_2^2\bigg\}
\bigg\{{\boldsymbol{\mathfrak{A}}}^4\vartheta_2^2\cdot
\mbig[7](\frac{\theta_1}{\theta_4}\mbig[7])^{\!\!\!2} -
\vartheta_3^2 \bigg\}.
\end{equation}
We thus infer  that ratio of any two $\theta$-solutions to
Eqs.~\eqref{X}--\eqref{Dtau} is proportional to a ratio of canonical
$\theta$-series with new modulus $\btau$ determined from Eq.~\eqref{J}.
Before proceeding to further integration, we need a closed formula
solution to the problem of finding that modulus.

\section{Modular inversion problem and related topics\label{S8}}

\noindent As mentioned in Introduction,  no explicit formula
realization of the scheme \eqref{Klein} is hitherto available if
elliptic curve has been given in Weierstrassian form
\begin{equation}\label{weab}
y^2=4\,x^3-a\,x-b
\end{equation}
or in a more general form
\begin{equation}\label{ell4}
y^2=a_0^{}\,x^4+4\,a_1^{}\,x^3+6\,a_2^{}\,x^2+4\,a_3^{}\,x+a_4^{}\,.
\end{equation}
Analytic solution to the problem is known only for the canonical
Legendre form
\begin{equation}\label{legendre}
y^2=(1-x^2)(1-k^2 x^2)\,.
\end{equation}
In this case it is given by the famous formula of Jacobi
\begin{equation}\label{KK'}
\tau=\ri\,\frac{\ds\ellK'(k)}{\ellK(k)}\,,
\end{equation}
where $\ellK$ and $\ds\ellK'$ are complete elliptic integrals
\cite{jacobi,bateman}. By virtue of classical formula
$$
k^2=\frac{\vartheta_2^4(\tau)}{\vartheta_3^4(\tau)}
$$
this solution implies the identity
\begin{equation}\label{t2}
\tau\equiv\ri\,\frac{{\ds\ellK'}\!\mbig[5](
\frac{\vartheta_2^2(\tau)}{\vartheta_3^2(\tau)}
\mbig[5])}{\ellK\mbig[5](\frac{\vartheta_2^2(\tau)}
{\vartheta_3^2(\tau)}\mbig[5])} \mod\; \boldsymbol{\Gamma}(2)
\qquad\forall\tau\in\Hp\,.
\end{equation}

Elliptic curves are, however, parametrized by group
$\boldsymbol{\Gamma}(1)$, not by $\boldsymbol{\Gamma}(2)$. Moreover,
transitions between \eqref{weab}, \eqref{ell4}, and \eqref{legendre}
requires knowledge of roots of the $x$-polynomials \eqref{ell4} or
\eqref{weab} (see \cite[13.5]{bateman}) and modulus $\tau$ is computed
via ratios of certain hypergeometric series. Owing to the fact that
$_2F_1(J)$-series converges only inside the unity circle, the
$_2F_1$-solutions will differ in structure depending on whether $|J|>1$
or $|J|<1$.  For instance, in Weierstrassian representation
\eqref{weab} the resulting formulae constitute rather cumbersome
expressions and, in addition to that, they involve the series in
logarithmic derivative of Euler's $\Gamma$-function. See, \eg,
\cite[pp.~27--28]{kleinMA}, one-half-page-long collection of formulae
(22)--(27) in Sect.~14.6.2 of book \cite{bateman}, or enumeration of
all the particular cases in \cite[{\bf I}:~pp.~341--348]{halphen};
though more compact hypergeometric form to solution was obtained by
Bruns \cite{bruns}. Such forms of solutions are not convenient in
applications since they can not be manipulated analytically. Meanwhile
the problem has an elegant solution.

\subsection{Analytic formula solution}

Modulus $\tau$ depends only on the value of absolute
$\boldsymbol{\mathrm{J}}$-invariant which in turn is computed via
coefficients $a$'s through the two  invariants \cite{hancock,a,weber}
\begin{equation}\label{g23a}
\g2=a_0^{}a_4^{}-4\,a_1^{}a_3^{}+3\,a_2^2\,,\qquad \g3=
\mbig[11]|
\begin{matrix}
a_0^{}&\!\!a_1^{}&\!\!a_2^{}\\
a_1^{}&\!\!a_2^{}&\!\!a_3^{}\\
a_2^{}&\!\!a_3^{}&\!\!a_4^{}
\end{matrix}\mbig[11]|
\end{equation}
according to  Klein's definition
\begin{equation}\label{Jg23}
\boldsymbol{\mathrm{J}}=\frac{\g2{}^{\hspace{-0.4em}3}}
{\g2{}^{\hspace{-0.4em}3}-27\,\g3{}^{\hspace{-0.4em}2}}\,.
\end{equation}
It is well known that  function $J(\tau)$ is related to a
hypergeometric equation of the form \cite{indus}
\begin{equation}\label{Jode}
J(J-1)\,\psi''+\frac16\,(7\,J-4)\,\psi'+\frac{1}{144}\,\psi=0
\end{equation}
and generic solutions of such equations are usually designated as
$_2F_1(\alpha,\beta;\gamma|z)$. However, under some restrictions on
parameters $(\alpha,\beta,\gamma)$ the solutions are representable in
terms of known special functions; this point occurs when the
$_2F_1$-series admits a quadratic transformation \cite{bateman2}. In
this case $_2F_1$-equation reduces to an equation with two parameters,
\eg, to Legendre's equation \cite{abramowitz,WW,bateman2}. This is just
the case of the $\psi$-equation~\eqref{Jode}. Its solution is a linear
combination
$$
\psi=\sqrt[6]{J\,}\,\big\{A\,P_\nu^\mu(\sqrt{1-J\,})+
B\,Q_\nu^\mu(\sqrt{1-J\,})\big\}
$$
of Legendrian  functions with parameters
$(\nu,\mu)=\big({-}\frac12,\frac13\big)$. Recall that functions
$P_\nu^\mu(z)$, $Q_\nu^\mu(z)$ are independent solutions of the linear
equation
\begin{equation}\label{Leg}
(1-z^2)\,\psi''-2\,z\,\psi'+\Big\{\nu(\nu+1)-
\Mfrac{\mu^2}{1-z^2}\Big\}\,\psi=0\,.
\end{equation}
See \cite[Ch.~3]{bateman2} for definitions and exhaustive properties of
these functions. From the aforesaid it appears that  formula
\begin{equation*}\label{ratio}
\tau=\frac{\boldsymbol{a}\,P_\nu^\mu(\sqrt{1-J\,})+\boldsymbol{b}\,
Q_\nu^\mu(\sqrt{1-J\,})} {\boldsymbol{c}\,P_\nu^\mu(\sqrt{1-J\,})+
\boldsymbol{d}\,Q_\nu^\mu(\sqrt{1-J\,})}
\end{equation*}
must hold, where parameters $(\boldsymbol{a}, \boldsymbol{b},
\boldsymbol{c}, \boldsymbol{d})$ have definite numeric values. In order
to find them, we need only any three values of $\tau$ under which the
quantity $J(\tau)$ has known exact values. There are lot of such points
and all of them correspond to tori with complex multiplication
\cite{weber}. For instance
$$
J(\ri)=1\,,\qquad J\Big(\Mfrac12+\ri\,\Mfrac12\,\sqrt{3}
\Big)=0\,,\qquad
J\big(\sqrt{2}\,\ri\big)=\frac{5^3}{3^3}\,,\qquad\text{etc.}
$$
The asymptotic property $P(z)/Q(z)\to \infty$  as $z\to\infty$ implies
that $\boldsymbol{c}=0$ since $J(\ri\infty)=\infty$. It therefore
suffice  to consider only two simplest points $J=\{0,1\}$ and
corresponding values of functions $P,Q(\sqrt{1-J\,})$ are easily
computed. We obtain these values with use of formulae (9)--(10) and
table cases (22), (40) in Sect.~3.2 of  \cite{bateman2}.

\begin{theorem}[Weierstrassian analog of \eqref{KK'}]\label{T11}
For elliptic curve in Weierstrassian form \eqref{weab} its modulus
$\tau=\om'\!/\omega$, \ie, solution of the transcendental equation
$$
J(\tau)=\frac{a^3}{a^3-27\,b^2}\,,
$$
is given by the expression
\begin{equation}\label{PQ}
\tau=\ri\,\frac{{\pow{P}{\!\!\sm\s1\!/\!6}{\,\,\s0}}
\big({-}\sqrt{\boldsymbol{\mathfrak{g}}\,}\big)}
{\pow{P}{\!\!\sm\s1\!/\!6}{\,\,\s0}
\big(\sqrt{\boldsymbol{\mathfrak{g}}\,}\big)}\,,\qquad
\boldsymbol{\mathfrak{g}}\DEF27\,\frac{b^2}{a^3}\,.
\end{equation}
If  curve has the generic form \eqref{ell4} then
$\boldsymbol{\mathfrak{g}}=1-\boldsymbol{\mathrm{J}}^{\sm1}$ is
computed according to \eqref{g23a}--\eqref{Jg23}.
\end{theorem}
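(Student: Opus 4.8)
The plan is to realise $\tau=\omp\!/\om$ as a fractional-linear function of a ratio of solutions of the Picard--Fuchs (period) equation of the family \eqref{weab}, to reduce that equation to Legendre's equation \eqref{Leg}, and to fix the remaining three-parameter fractional-linear freedom by three special values of $J$. First I would use the classical fact --- recalled just above --- that the two half-periods $\om$, $\omp$, viewed as functions of the single invariant $J=g_2^3/(g_2^3-27\,g_3^2)$ along the curve family, span the two-dimensional solution space of \eqref{Jode}. Consequently $\tau$ is a fractional-linear function of any fixed quotient of a fundamental system of \eqref{Jode}, with coefficients independent of $J$ (the multivaluedness of that quotient matches exactly the $\boldsymbol{\Gamma}(1)$-ambiguity in the defining equation $J(\tau)=a^3/(a^3-27\,b^2)$).

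Next I would invoke the quadratic hypergeometric transformation bringing \eqref{Jode} to the form \eqref{Leg}: put $\psi=\sqrt[6]{J\,}\cdot w$ and change the independent variable to $z=\sqrt{1-J\,}$; a direct computation then shows that $w$ satisfies Legendre's equation \eqref{Leg} with $(\nu,\mu)=\big({-}\tfrac12,\tfrac13\big)$. Hence $\sqrt[6]{J\,}\,P_{-1/2}^{1/3}(\sqrt{1-J\,})$ and $\sqrt[6]{J\,}\,Q_{-1/2}^{1/3}(\sqrt{1-J\,})$ form a fundamental system of \eqref{Jode}, and there are constants $\boldsymbol{a},\boldsymbol{b},\boldsymbol{c},\boldsymbol{d}$ with $\tau=(\boldsymbol{a}P+\boldsymbol{b}Q)/(\boldsymbol{c}P+\boldsymbol{d}Q)$, where $P=P_\nu^\mu(\sqrt{1-J\,})$, $Q=Q_\nu^\mu(\sqrt{1-J\,})$; the prefactor $\sqrt[6]{J\,}$ cancels in the ratio.

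Then I would pin down the constants. Since $J(\ri\infty)=\infty$ while the ratio $P_\nu^\mu(z)/Q_\nu^\mu(z)$ is unbounded as $z\to\infty$, the requirement $\tau\to\ri\infty$ forces $\boldsymbol{c}=0$, so that $\tau=(\boldsymbol{a}/\boldsymbol{d})\,(P/Q)+\boldsymbol{b}/\boldsymbol{d}$. Two further evaluations at $\mathbb{H}^+$-representatives of known $J$, namely $J(\ri)=1$ (where $\sqrt{1-J\,}=0$) and $J(\varrho)=0$ (where $\sqrt{1-J\,}=1$), combined with the tabulated values and limiting behaviour of $P_\nu^\mu,Q_\nu^\mu$ at $z=0$ and $z=1$ (formulae (9)--(10) and the table entries (22), (40) of \cite[\S\,3.2]{bateman2}), determine $\boldsymbol{a}/\boldsymbol{d}$ and $\boldsymbol{b}/\boldsymbol{d}$ numerically. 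Finally I would recast the answer in the stated shape: from \eqref{Jg23} one has $\mathfrak{g}=1-J^{-1}=27\,b^2/a^3$, hence $1-J=-J\,\mathfrak{g}$, i.e. $\sqrt{1-J\,}=\sqrt{-J\,}\,\sqrt{\mathfrak{g}\,}$; feeding this into the appropriate quadratic/connection identity for $P_\nu^\mu$ converts the quotient of order-$\tfrac13$, degree-$({-}\tfrac12)$ Legendre functions of argument $\sqrt{1-J\,}$ into a quotient of order-$0$, degree-$({-}\tfrac16)$ Legendre functions of arguments $\pm\sqrt{\mathfrak{g}\,}$, and the two surviving numerical constants collapse to the single factor $\ri$. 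This yields \eqref{PQ}; for the quartic \eqref{ell4} the only change is that $J$ is now computed from \eqref{g23a}--\eqref{Jg23}, whence $\mathfrak{g}=1-\boldsymbol{\mathrm{J}}^{\smin1}$.

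The main obstacle is the last step: selecting the correct quadratic transformation/connection formula for $P_\nu^\mu$ that carries $(\nu,\mu)=({-}\tfrac12,\tfrac13)$ with argument $\sqrt{1-J\,}$ to $(\nu,\mu)=({-}\tfrac16,0)$ with argument $\sqrt{\mathfrak{g}\,}$, and then verifying that the chosen branch of $P_{-1/6}^0(\pm\sqrt{\mathfrak{g}\,})$ is the one producing $\tau\in\mathbb{H}^+$. A convenient consistency check is furnished by the two degenerate curves: $b=0$ (lemniscatic) gives $\mathfrak{g}=0$ and must reproduce $\tau=\ri$, while $a=0$ (equianharmonic) gives $\mathfrak{g}=\infty$ and must reproduce $\tau=\varrho$. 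The reduction of \eqref{Jode} to \eqref{Leg} and the fitting of the three constants are otherwise routine.
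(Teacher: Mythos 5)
Your set-up coincides with the paper's own route: regard $\tau$ as a M\"obius function of the ratio of two solutions of the hypergeometric equation \eqref{Jode}, reduce \eqref{Jode} to Legendre's equation \eqref{Leg} by a quadratic substitution, kill $\boldsymbol{c}$ via $J(\ri\infty)=\infty$, and fix the remaining constants from the exactly known values $J=1$ and $J=0$ together with the table values of $P_\nu^\mu$, $Q_\nu^\mu$ at the corresponding arguments. Carried out for $z^2=1-J$, $(\nu,\mu)=\big({-}\frac12,\frac13\big)$, this reproduces the paper's intermediate formula \eqref{ver1}, so up to that point your argument is sound and essentially identical to the paper's.

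The gap is the final passage to the stated form \eqref{PQ}, which you yourself flag as the main obstacle. The substitution $\sqrt{1-J\,}=\sqrt{-J\,}\,\sqrt{\boldsymbol{\mathfrak{g}}\,}$ does not by itself turn the quotient of $P_{-1/2}^{1/3}$, $Q_{-1/2}^{1/3}$ of argument $\sqrt{1-J\,}$ into a quotient of Legendre functions of the arguments $\pm\sqrt{\boldsymbol{\mathfrak{g}}\,}$: the argument still depends on $J$ through the factor $\sqrt{-J\,}$, so no connection formula in a fixed variable applies, and the ``appropriate quadratic/connection identity'' is left unidentified. What actually closes the argument---and what the paper does---is to note that the same normal-form-preserving ansatz $z\mapsto J=(a\,z^2+b)/(c\,z^2+d)$ admits a \emph{second} admissible reduction, $z^2=1-1/J=\boldsymbol{\mathfrak{g}}$ with $(\nu,\mu)=\big({-}\frac16,0\big)$ (or $\big({-}\frac56,0\big)$), which produces Legendre functions directly in the variable $\sqrt{\boldsymbol{\mathfrak{g}}\,}$; the two surviving constants are then refit from the table values in Sect.~3.2 of \cite{bateman2}, and the collapse to \eqref{PQ} is achieved not by a quadratic transformation of the $\big({-}\frac12,\frac13\big)$ answer but by the connection formula 3.2(10), namely $\pi\,P_{-1/6}^{0}(-z)=\pi\,\re^{\frac{\pi}{6}\ri}\,P_{-1/6}^{0}(z)+Q_{-1/6}^{0}(z)$, which expresses the fitted combination of $P$ and $Q$ at $+\sqrt{\boldsymbol{\mathfrak{g}}\,}$ through the single function $P_{-1/6}^{0}\big({-}\sqrt{\boldsymbol{\mathfrak{g}}\,}\big)$ and yields the factor $\ri$. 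Your consistency checks at the lemniscatic and equianharmonic curves are useful but cannot replace this missing step: they test only two values of $\boldsymbol{\mathfrak{g}}$ and so cannot certify the identity needed for all $J$.
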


\begin{proof}
Let us use relations between Legendrian equations \eqref{Leg} with
different indices. More precisely, we carry out the above mentioned
quadratic transformation
$$
z\mapsto J=\frac{a\,z^2+b}{c\,z^2+d}
$$
and demand that the normal form of Eq.~\eqref{Jode} is preserved, that
is,
$$
\tilde\psi''=-\frac{1}{12^2}\,\frac{36\,J^2-41\,J+32} {(J-1)^2J^2}\,
\tilde\psi\,.
$$
The only possibilities we then have are
$$
z^2=1-J\,,\qquad(\nu,\,\mu)=\Big({-}\frac12,\:\pm\frac13\Big)
$$
and
$$
z^2=1-\frac{1}{J}\,,\quad(\nu,\,\mu)=\Big({-}\frac16,\:0\Big)
\quad\text{or}
\quad(\nu,\,\mu)=\Big({-}\frac56,\:0\Big)\,.
$$
Coefficients $(\boldsymbol{a}, \boldsymbol{b}, \boldsymbol{c},
\boldsymbol{d})$ are derived as said above. For the first case we
obtain
\begin{equation}\label{ver1}
\tau =\bigg\{\pi\,\ri\,\frac{\pow{P}{\nu}{\mu}}{\pow{Q}{\nu}{\mu}}
\big(\sqrt{1-J}\big)-1
\bigg\}\,\re^{\!\frac{\pi}{3}\ri}_{\mathstrut}\,,
\end{equation}
where $(\nu,\mu)=\big({-}\frac12,\frac13\big)$. A simpler version comes
from the second cases, \eg, from case
$(\nu,\mu)=\big({-}\frac16,0\big)$. Functions $(P,Q)$ therewith are
interchanged. As before, their proportionality coefficient is derived
with use of tables in Sect.~3.2 of \cite{bateman2}. The last step is to
use formula 3.2(10) of \cite{bateman2} to express a certain linear
combination of $P(z)$ and $Q(z)$ through a single $P(-z)$; see
\cite[(8.2.3)]{abramowitz}. Under our parameters this relation reads
$$
\pi\,\pow{P}{\!\!\sm\s1\!/\!6}{\,\,\s0}(-z)=
\pi\re^{\frac\pi6\boldsymbol{s}\ri}_{\mathstrut}
\pow{P}{\!\!\sm\s1\!/\!6}{\,\,\s0}(z)+
\pow{Q}{\!\sm\s1\!/\!6}{\,\,\s0}(z)\,,\qquad\boldsymbol{s}\DEF
\mathfrak{sign}(\boldsymbol\Im z)\,.
$$
Multivalued functions $P$ and $Q$ are defined such that this identity
holds for $\boldsymbol\Im z\lessgtr 0$ (see also
\cite[3.3.1(10)]{bateman2}) but this restriction disappears when
passing to the ratio $P/Q$ and the answer simplifies into the ultimate
formula \eqref{PQ} under arbitrary $a$, $b$.
\end{proof}

The result \eqref{ver1} was announced recently in \cite{br1} and used
there in connection with a nontrivial application to the soliton theory
when considering a linear spectral problem of the form
$\Psi'''+u(x)\,\Psi'-\frac12\,u'(x)\,\Psi=\lambda\,\Psi$.

The function $\sqrt{1-J(\tau)}$ is a single-valued one and therefore
Eq.~\eqref{ver1} entails an interesting analog of  identity \eqref{t2}.

\begin{corollary}\label{C5}
For all $\tau\in\Hp$ the following $\boldsymbol{\Gamma}(1)$-analog of
identity \eqref{t2} holds\/$:$
$$
\tau \equiv\bigg\{\pi\,\ri\,\frac{\pow{P}{\nu}{\mu}}{\pow{Q}{\nu}{\mu}}
\Big(\ri\,\mfrac{\sqrt{27}}{\pi^6}\mfrac{\g3(\tau)}
{\ded^{12}(\tau)}\Big)-1
\bigg\}\,\re^{\!\frac{\pi}{3}\ri}_{\mathstrut} \mod\;
\boldsymbol{\Gamma}(1)
$$
under  $(\nu,\mu)=\big({-}\frac12,\frac13\big)$.
\end{corollary}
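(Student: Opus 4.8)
The plan is to obtain Corollary~\ref{C5} simply by reading off formula \eqref{ver1} of Theorem~\ref{T11}, in its $(\nu,\mu)=\big({-}\frac12,\frac13\big)$ version, and then rewriting the Legendre-function argument $\sqrt{1-J}$ in terms of the $g_3$- and $\ded$-series. Formula \eqref{ver1} is the solution of the transcendental equation $J(\tau)=A$, that is, it reconstructs the modulus from the value of the absolute invariant; since $J$ is $\boldsymbol{\Gamma}(1)$-invariant, this reconstruction is unique only up to the modular group. Hence, if one substitutes the admissible value $A=J(\tau)$ at an arbitrary point $\tau\in\mathbb{H}^+$, formula \eqref{ver1} turns into an identity valid for every $\tau$ but understood modulo $\boldsymbol{\Gamma}(1)$ --- which is exactly the statement of the corollary. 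It remains only to express the argument explicitly.

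For that I would use $J=g_2^3/(g_2^3-27\,g_3^2)$, which gives
\[
1-J=\frac{-27\,g_3^2}{g_2^3-27\,g_3^2}=\frac{-27\,g_3^2}{\Delta}\,,\qquad
\Delta\DEF g_2^3-27\,g_3^2\,,
\]
and then insert the classical discriminant identity, which in the half-period normalisation of this paper (periods $2,2\tau$) reads $\Delta(\tau)=\pi^{12}\,\ded^{24}(\tau)$. I would derive this normalisation either directly, by substituting the $\vartheta$-constant expressions \eqref{g23} for $g_2,g_3$, reducing the symmetric combination $g_2^3-27\,g_3^2$ with Jacobi's identity \eqref{324}, and recognising $\vartheta_2\vartheta_3\vartheta_4=2\,\ded^3$ from \eqref{ded}; or more cheaply, by rescaling the familiar $\Delta=(2\pi)^{12}\ded^{24}$ by the homogeneity factor $2^{-12}$ appropriate to the lattice $2\mathbb{Z}+2\tau\mathbb{Z}$. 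Taking square roots then yields
\[
\sqrt{1-J(\tau)}=\ri\,\frac{\sqrt{27}}{\pi^6}\,\frac{g_3(\tau)}{\ded^{12}(\tau)}\,,
\]
and substituting this into \eqref{ver1} produces the asserted formula.

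The one genuinely non-routine point will be the choice of branch. The square roots $\sqrt{\Delta}$ and $\sqrt{1-J}$ have to be taken with exactly the sign that is compatible with the derivation of \eqref{ver1} --- which itself rested on prescribed values of $P_\nu^\mu,Q_\nu^\mu$ at $J=0$ and $J=1$ --- so that the factor $\ri$ and the bare power $\pi^{12}$, with no stray numerical constant, come out as written; checking this consistency is the main obstacle. A secondary point, needed so that the identity propagates across all of $\mathbb{H}^+$ rather than just a neighbourhood, is that $\sqrt{1-J(\tau)}$ is genuinely single-valued there: $J-1$ has a double zero at the elliptic point $\tau=\ri$ and $J$ is otherwise unramified on the upper half-plane, so no monodromy is picked up. With these observations in place, everything else is substitution.
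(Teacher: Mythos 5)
Your proposal is correct and follows essentially the same route as the paper: the corollary is obtained by substituting $A=J(\tau)$ into formula \eqref{ver1}, rewriting $\sqrt{1-J}=\ri\sqrt{27}\,g_3/(\pi^6\ded^{12})$ via the discriminant $g_2^3-27g_3^2=\pi^{12}\ded^{24}$ in the half-period normalization $(1,\tau)$, and invoking the single-valuedness of $\sqrt{1-J(\tau)}$ on $\mathbb{H}^+$, everything being understood modulo $\boldsymbol{\Gamma}(1)$. Your worry about the branch is also largely absorbed by the $\mathrm{mod}\;\boldsymbol{\Gamma}(1)$ qualification, since either branch inverts $J$ up to the monodromy of the hypergeometric equation, which is precisely $\boldsymbol{\Gamma}(1)$.
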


\subsection{Consequences}

An interrelation between \eqref{PQ} and Jacobi's formula \eqref{KK'}
needs to be understood  if the elliptic curve \eqref{ell4} has already
been given in the canonical form \eqref{legendre} as
$$
y^2=(1-x^2)(1-\varkappa^2 x^2)\,.
$$
Its absolute $J$-invariant is determined not by the classical
expression
\begin{equation}\label{J1}
J=\frac{4}{27}\,\frac{(k^4-k^2+1)^3}{k^4\,(k^2-1)^2}\,,
\end{equation}
wherein $k^2=\varkappa^2$, but by expression of the form
\begin{equation}\label{J2}
J=\frac{1}{108}\,\frac{(\varkappa^4+14\,\varkappa^2+1)^3}
{\varkappa^2\,(\varkappa^2-1)^4}\,.
\end{equation}
In the standard Legendre--Jacobi theory of equation \eqref{legendre}
\cite{WW,tannery,weber,halphen,we2} the function $x$ is proportional to
Jacobi's  $\mathrm{sn}$-function, whereas Weierstrass' $\wp$ is
proportional to the square of $\mathrm{sn}$; hence $\wp\rightleftarrows
\mathrm{sn}$ is not a \emph{birational} transformation. Function $x$
also solves equation \eqref{14} under
$\bA={\boldsymbol{\mathfrak{A}}}=1$ and hence has periods $2\,\ri
\,\ds\ellK'$ and $4\,\ellK$, so their ratio is equal to $\frac12\tau$
rather than $\tau$. To be more precise, one can carry out some standard
calculations and derive birational transformations between Weierstrass'
$\{\wp,\wp'\}$ and the $\theta$-ratios, \ie, Jacobi's basis
\begin{alignat*}{1}
\mathrm{sn}\big(\pi\,\vartheta_3^2(\tau)\,z;k\big)&=
\frac{\vartheta_3(\tau)}{\vartheta_2(\tau)}\cdot
\frac{\theta_1(z|\tau)}{\theta_4(z|\tau)}\,,\\
\mathrm{cn}\big(\pi\,\vartheta_3^2(\tau)\,z;k\big)&=
\frac{\vartheta_4(\tau)}{\vartheta_2(\tau)}\cdot
\frac{\theta_2(z|\tau)}{\theta_4(z|\tau)}\,,\\
\mathrm{dn}\big(\pi\,\vartheta_3^2(\tau)\,z;k\big)&=
\frac{\vartheta_4(\tau)}{\vartheta_3(\tau)}\cdot
\frac{\theta_3(z|\tau)}{\theta_4(z|\tau)}\,.
\end{alignat*}

\begin{proposition}[Inversion of \eqref{wp2}]
Every homogeneous $\theta(z|\tau)$-ratio and consequently Jacobi's
functions $\{\mathrm{sn}, \mathrm{cn}, \mathrm{dn}\}$ are rationally
represented via Weierstrass' $\{\wp,\wp'\}$-functions. The three basic
$\theta$-ratios read
\begin{alignat*}{1}
\frac{\theta_2(z|\tau)}{\theta_1(z|\tau)}&=
-\frac{\vartheta_2(\tau)}{\vartheta_1'(\tau)}\cdot
\frac{\wp'(2z|2\tau)}{\wp(2z|2\tau)-\ep(2\tau)}\\
&= 2\, \frac{\vartheta_2(\tau)}{\Dvartheta(\tau)} \cdot\Big\{
\zeta(2z|2\tau)- \zeta(2z\!-\!2\tau|2\tau)-\etap(2\tau)
\Big\}\,,\\
\frac{\theta_3(z|\tau)}{\theta_1(z|\tau)}&=
-\frac{\vartheta_3(\tau)}{2\,\Dvartheta(\tau)}\cdot
\frac{\wp'\big(z\big|{\ts\frac{\tau+1}{2}}\big) }{\wp
\big(z\big|{\ts\frac{\tau+1}{2}}\big)
-e\big({\ts\frac{\tau+1}{2}}\big)}\\
&= \frac{\vartheta_3(\tau)}{\Dvartheta(\tau)} \cdot\Big\{
\zeta\mbig[3](z\big|{\ts\frac{\tau+1}{2}}\mbig[3])-
\zeta\mbig[3](z\!-\!1\big|{\ts\frac{\tau+1}{2}}\mbig[3])-
\eta\mbig[3]({\ts\frac{\tau+1}{2}}\mbig[3]) \Big\}\,,\\
\frac{\theta_4(z|\tau)}{\theta_1(z|\tau)}&=
-\frac{\vartheta_4(\tau)}{2\,\Dvartheta(\tau)}\cdot
\frac{\wp'\big(z\big|{\ts\frac{\tau}{2}}\big) }{\wp
\big(z\big|{\ts\frac{\tau}{2}}\big)
-e\big({\ts\frac{\tau}{2}}\big)}\\
&= \frac{\vartheta_4(\tau)}{\Dvartheta(\tau)} \cdot\Big\{
\zeta\mbig[3](z\big|{\ts\frac{\tau}{2}}\mbig[3])-
\zeta\mbig[3](z\!-\!1\big|{\ts\frac{\tau}{2}}\mbig[3])-
\eta\mbig[3]({\ts\frac{\tau}{2}}\mbig[3]) \Big\}
\end{alignat*}
and nine other ones are obtained by the three half-period shifts
$z\mapsto z+\frac12\,\big\{1,\tau,\tau+1\big\}$.
\end{proposition}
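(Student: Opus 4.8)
Each displayed identity is a Liouville-type comparison of two elliptic functions of $z$ having the same periods and the same divisor, and the opening rationality assertion then follows from the structure theorem for elliptic functions. I carry out the plan for the ratio $\theta_2(z|\tau)/\theta_1(z|\tau)$; the cases $\theta_3/\theta_1$ and $\theta_4/\theta_1$ are verbatim the same, only the rescaled modulus (namely $\tfrac{\tau+1}{2}$, resp.\ $\tfrac{\tau}{2}$) and the distinguished branch value change, the remaining nine homogeneous ratios follow from these three by the half-period shifts \eqref{shift}, and the $\mathrm{sn},\mathrm{cn},\mathrm{dn}$-formulae follow on substituting the $\theta$-quotient expressions displayed just above.

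\textbf{Lattice and divisor of $\theta_2/\theta_1$.} First I would read off the period lattice from the transformation law $\theta_{\alpha\beta}(z+n+m\tau|\tau)=(-1)^{n\alpha-m\beta}\,\re^{-\pi\ri\,m(2z+m\tau)}\,\theta_{\alpha\beta}(z|\tau)$ quoted right after \eqref{shift}: the exponential automorphy factors of $\theta_2$ and $\theta_1$ cancel in the quotient, so $\theta_2/\theta_1$ acquires the multiplier $+1$ under $z\mapsto z+1$ and $-1$ under $z\mapsto z+\tau$ and is therefore an (order-two) elliptic function for the lattice $\langle1,2\tau\rangle$ --- exactly the $z$-period lattice of $\wp(2z|2\tau)$ and $\wpp(2z|2\tau)$. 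In a fundamental parallelogram it has simple poles at the two representatives $z=0$ and $z=\tau$ of the zero set of $\theta_1$, and simple zeros at $z=\tfrac12$ and $z=\tfrac12+\tau$, the zeros of $\theta_2$.

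\textbf{Matching the right-hand side.} The function $\wpp(2z|2\tau)$ has a triple pole at $z=0$ and simple zeros at the three half-periods $z\in\{\tfrac12,\tau,\tfrac12+\tau\}$, while $\wp(2z|2\tau)-\ep(2\tau)$ --- with $\ep(2\tau)$ the branch value at the half-period $z=\tau$ --- has a double pole at $z=0$ and a double zero at $z=\tau$. Hence $\wpp(2z|2\tau)/\bigl(\wp(2z|2\tau)-\ep(2\tau)\bigr)$ has simple poles exactly at $z=0$ and $z=\tau$ and simple zeros exactly at $z=\tfrac12,\tfrac12+\tau$: the divisor of $\theta_2/\theta_1$. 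By Liouville the quotient of the two sides is a constant, which I fix by matching leading Laurent coefficients at $z=0$: from $\theta_1=\Dvartheta\,z+O(z^3)$ and $\theta_2=\vartheta_2+O(z^2)$ one has $\theta_2/\theta_1\sim\vartheta_2/(\Dvartheta\,z)$, whereas $\wpp(2z|2\tau)\sim-\tfrac{1}{4z^3}$ and $\wp(2z|2\tau)\sim\tfrac{1}{4z^2}$ give $\wpp/(\wp-\ep)\sim-1/z$, so the constant is $-\vartheta_2/\Dvartheta=-\vartheta_2/\vartheta_1'$ --- the first displayed formula. The same computation with $\vartheta_3$ (resp.\ $\vartheta_4$), together with the extra factor $\tfrac12$ coming from the unscaled expansions $\wpp(z|\cdot)\sim-2/z^3$, $\wp(z|\cdot)\sim1/z^2$, produces $-\vartheta_3/(2\vartheta_1')$ and $-\vartheta_4/(2\vartheta_1')$. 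The $\zeta$-versions are just the $\wp$-forms rewritten through the classical logarithmic-derivative identity
\[
\frac{\wpp(u)}{\wp(u)-e_j}=\zeta(u-\omega_j)+\zeta(u+\omega_j)-2\,\zeta(u)=-2\bigl\{\zeta(u)-\zeta(u-\omega_j)\bigr\}+2\,\eta_j ,
\]
where $\wp(\omega_j)=e_j$ and $\eta_j=\zeta(\omega_j)$; this follows by logarithmic differentiation of $\wp(u)-e_j=-\sigma(u-\omega_j)\,\sigma(u+\omega_j)/\bigl(\sigma(u)^2\sigma(\omega_j)^2\bigr)$ together with $\sigma(u+2\omega_j)=-\re^{2\eta_j(u+\omega_j)}\,\sigma(u)$. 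Putting $u=2z$, $\omega_j=2\tau$, $\eta_j=\etap(2\tau)$ turns the $\wp$-form for $\theta_2/\theta_1$ into the stated $\zeta$-form, and similarly with $u=z$, $\omega_j=1$ for $\theta_3/\theta_1$ and $\theta_4/\theta_1$.

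\textbf{General statement and the main obstacle.} Finally, any quotient of equally many factors $\theta_{\alpha\beta}(z|\tau)$ loses its exponential automorphy factors and retains only $\pm1$ multipliers along $z\mapsto z+1$ and $z\mapsto z+\tau$; hence it is elliptic for a sublattice $L$ of $\langle1,\tau\rangle$ containing $\langle2,2\tau\rangle$, by the structure theorem a rational function of $\wp_L$ and $\wpp_L$, and by the homogeneity relations $\wp_L,\wpp_L$ are $\wp,\wpp$ at a rescaled modulus --- so such a ratio is rational in Weierstrass' functions, and in particular so are $\mathrm{sn},\mathrm{cn},\mathrm{dn}$ via their $\theta$-quotient representations. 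The one genuinely delicate step is the divisor bookkeeping: in each case one must pick out precisely the branch value ($\ep$ or $e$) whose half-period coincides with the ``extra'' zero of $\theta_1$ in the relevant fundamental domain, so that a simple zero of $\wpp$ is absorbed into the double zero of $\wp-e$ and re-emerges as the required simple pole; once the lattice and the (two poles, two zeros) are correctly placed, everything else is routine.
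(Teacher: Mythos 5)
Your argument is correct, and it checks out in all the details that matter: the multiplier bookkeeping gives exactly the lattices $\langle 1,2\tau\rangle$, $\langle 2,1{+}\tau\rangle$, $\langle 2,\tau\rangle$ matching the moduli $2\tau$, $\tfrac{\tau+1}{2}$, $\tfrac{\tau}{2}$ on the right-hand sides; the divisors (simple poles at the two $\theta_1$-zeros, simple zeros at the two zeros of $\theta_k$, with the distinguished branch value $\ep(2\tau)$ resp.\ $e(\cdot)$ chosen at the ``extra'' zero of $\theta_1$) coincide; the Laurent normalizations at $z=0$ give precisely the constants $-\vartheta_2/\Dvartheta$, $-\vartheta_3/(2\Dvartheta)$, $-\vartheta_4/(2\Dvartheta)$; and the passage to the $\zeta$-forms via $\wpp(u)/(\wp(u)-e_j)=\zeta(u-\omega_j)+\zeta(u+\omega_j)-2\zeta(u)$ together with $\zeta(u+\omega_j)=\zeta(u-\omega_j)+2\eta_j$ reproduces the stated expressions with $\etap(2\tau)$, $\eta(\tfrac{\tau+1}{2})$, $\eta(\tfrac{\tau}{2})$. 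The paper itself offers no proof of this Proposition (it is dismissed as ``standard calculations'' accompanying the conversion formulae \eqref{wp1}--\eqref{wp2}), so your Liouville-type divisor comparison is a perfectly legitimate way to supply one; it also cleanly explains the appearance of the half-moduli, which the paper only comments on afterwards. The only nitpick is the phrase that $\wp_L,\wpp_L$ are $\wp,\wpp$ ``at a rescaled modulus'': by the homogeneity relation the argument may have to be rescaled as well (as in $\wp(2z|2\tau)$ for the lattice $\langle 1,2\tau\rangle$), which your first computation in fact already uses.
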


Half-moduli on right hand sides of these equations explain the
`distinction' between Weierstrass' modulus for $(\zeta,\wp,\wp')$ and
Jacobi's one for $\mathrm{sn}$. By this we mean that the transition
$\varkappa^2\rightleftarrows k^2$, \ie, transformation
\eqref{J1}$\rightleftarrows$\eqref{J2}, is realized through a
duplication of modulus:
$$
\varkappa^2=k^2(2\,\tau)\,.
$$
To put it differently, the map $\tau\mapsto2\,\tau$ is a one-to-one
transformation---it is just a normalization of $\tau\in\Hp$---and every
elliptic curve is uniquely determined by the value $\tau$ (or
$2\,\tau$). We might of course work with modulus $2\,\tau$ instead of
$\tau$, however, in this case, classical integral representation of
fundamental group $\boldsymbol{\Gamma}(1)$ must be changed to the
matrices  $\big(\begin{smallmatrix}2&0\\0&1
\end{smallmatrix}\big)
\boldsymbol{\Gamma}(1)\big(\begin{smallmatrix}2&0\\0&1
\end{smallmatrix}\big)^{\!\sm1}$ (generically non-integral).
As far as we know, this precise correlation between Weierstrass' and
Jacobi's modular inversions has not been mentioned in the literature
\cite{WW,enneper,we,we2,tannery,weber,hancock,a,lawden,armitage,
bateman,halphen}.

One further comment is in order. Transformation of the curve
\eqref{ell4} from general form into the Weierstrass one (and vice
versa) is performed through the linear fractional change of variable
$x$ \cite{indus,weber,hancock,bateman}. This requires knowledge of
roots of $x$-polynomial \eqref{ell4}, \ie, solution of a quartic
equation, however. This is not convenient in investigation if
coefficients of the polynomial do not have definite numerical values
but are parameters. For this reason it would be useful to have a
transformation  over the field of coefficients
$\mathbb{C}(a_0^{},\ldots, a_4^{})$, \ie, \emph{without resorting to
solution of any equations}. Such a birational change does indeed exist
and a version of it is shown below. To simplify formulae we make a
trivial transformation bringing \eqref{ell4} to a shorten form with
$(a_{\s0},a_{\s1})=(1,0)$.

\begin{proposition}\label{P2}
The elliptic curve
\begin{equation}\label{wz}
y^2=x^4-6\,\alpha\,x^2+4\,\beta\,x+\gamma
\end{equation}
is equivalent to the canonical Weierstrass form through a birational
change over $\mathbb{C}(\alpha,\beta)$ $($no $\gamma$ here\/$)$.
Corresponding Weierstrass' cubic has the form
$$
\boldsymbol{w}^2=4\,\boldsymbol{z}^3-(3\,\alpha^2+\gamma)\,
\boldsymbol{z}-(\alpha^3-\gamma\,\alpha-\beta^2)
$$
and the transformation between  these curves reads as follows\/$:$
\begin{alignat}{3}
\boldsymbol{z}&=\frac12\,(x^2-y-\alpha)\,, & \qquad\qquad
x&=\frac12\,\frac{\boldsymbol{w}-\beta}{\boldsymbol{z}-\alpha}\,,
\notag\\
\boldsymbol{w}&=x^3-y\,x-3\,\alpha\,x+\beta\,, &
y&= \frac14\,\frac{(\boldsymbol{w}-\beta)^2}
{(\boldsymbol{z}-\alpha)^2}-2\,\boldsymbol{z}-\alpha \notag\\
&&&=\frac{\beta}{2}\,
\frac{\beta-\boldsymbol{w}}{(\boldsymbol{z}-\alpha)^2}
+\frac14\,\frac{9\,\alpha^2-\gamma}{\boldsymbol{z}-\alpha}-
\boldsymbol{z}+\alpha \notag\,.
\end{alignat}
Legendrian form \eqref{legendre} corresponds in these formulae to the
substitution
$$
y \dashrightarrow \frac{y}{k}\,,\qquad
(\alpha,\beta,\gamma)=
\bigg(\frac{k^2+1}{6\,k^2},0,\frac{1}{k^2}\bigg).
$$
\end{proposition}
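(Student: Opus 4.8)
The plan is to exhibit the birational map explicitly and then verify it is an isomorphism by direct substitution, the only creative input being the choice of the two generating functions on the quartic. Since \eqref{wz} is a genus-one curve carrying the rational point ``$\infty_+$'' at which $y\sim +x^2$, one expects a Weierstrass coordinate $\boldsymbol{z}$ with a double pole and $\boldsymbol{w}$ with a triple pole at that point; the normalization that lands the cubic in the stated form is $\boldsymbol{z}=\tfrac12(x^2-y-\alpha)$, $\boldsymbol{w}=x^3-yx-3\,\alpha\,x+\beta$. The key observation making everything automatic is the factorization $\boldsymbol{w}-\beta = x\,(x^2-y-3\alpha)=2\,x\,(\boldsymbol{z}-\alpha)$, so that $x=\dfrac{\boldsymbol{w}-\beta}{2(\boldsymbol{z}-\alpha)}$ and, solving the first relation for $y=x^2-2\,\boldsymbol{z}-\alpha$, both $x$ and $y$ are rational in $(\boldsymbol{z},\boldsymbol{w})$.

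First I would substitute $y=x^2-2\,\boldsymbol{z}-\alpha$ into \eqref{wz}. The $x^4$-terms cancel and one is left with a relation that is merely \emph{quadratic} in $x$, namely $4(\alpha-\boldsymbol{z})\,x^2-4\,\beta\,x+(2\boldsymbol{z}+\alpha)^2-\gamma=0$. Next I would insert $x=(\boldsymbol{w}-\beta)/\bigl(2(\boldsymbol{z}-\alpha)\bigr)$ and clear the factor $\boldsymbol{z}-\alpha$; after the elementary collapse $(\boldsymbol{w}-\beta)^2+2\,\beta\,(\boldsymbol{w}-\beta)=\boldsymbol{w}^2-\beta^2$ the identity reduces to $\boldsymbol{w}^2-\beta^2=\bigl[(2\boldsymbol{z}+\alpha)^2-\gamma\bigr](\boldsymbol{z}-\alpha)$, and expanding the right-hand side gives exactly $\boldsymbol{w}^2=4\,\boldsymbol{z}^3-(3\,\alpha^2+\gamma)\,\boldsymbol{z}-(\alpha^3-\gamma\,\alpha-\beta^2)$. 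As a consistency check, the coefficients $g_2=3\alpha^2+\gamma$, $g_3=\alpha^3-\gamma\alpha-\beta^2$ are precisely the invariants \eqref{g23a} of the quartic \eqref{wz} with $(a_0,a_1,a_2,a_3,a_4)=(1,0,-\alpha,\beta,\gamma)$, as they must be since a binary quartic and its Jacobian share $g_2,g_3$.

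For the converse I would take the displayed inverse formulae $x=\tfrac12(\boldsymbol{w}-\beta)/(\boldsymbol{z}-\alpha)$ and $y=x^2-2\,\boldsymbol{z}-\alpha=\tfrac14(\boldsymbol{w}-\beta)^2/(\boldsymbol{z}-\alpha)^2-2\,\boldsymbol{z}-\alpha$ and check, first, that they satisfy \eqref{wz} (substitute and use the cubic), and second, that composing them with the forward map yields the identity on both curves; each check is a one-line substitution. The alternative shape of $y$ with coefficient $9\alpha^2-\gamma$ follows from the first form by using the cubic to replace $(\boldsymbol{w}-\beta)^2$. Finally, birationality \emph{over} $\mathbb{C}(\alpha,\beta)$ is read off the formulae themselves: the forward map is polynomial in $(x,y)$ with coefficients in $\mathbb{Z}[\alpha,\beta]$, and the backward map has only powers of $\boldsymbol{z}-\alpha$ in the denominators with coefficients again in $\mathbb{Z}[\alpha,\beta]$ (the $\gamma$ visible in the second form of $y$ is eliminable via the cubic), so $\gamma$ enters only through the coefficients of the target curve, never through the change of variables — the same substitution works uniformly for the whole family. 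I do not expect a genuine obstacle: once the ansatz for $\boldsymbol{z}$, $\boldsymbol{w}$ is fixed every step is forced, and the only thing requiring mild care is bookkeeping the branch $y\sim +x^2$ versus $y\sim -x^2$, which merely flips the sign of $\boldsymbol{w}$ and corresponds to the choice of one of the two points over $\infty$.
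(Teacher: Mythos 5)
Your verification is correct, and the algebra checks out: the factorization $\boldsymbol{w}-\beta=x\,(x^2-y-3\,\alpha)=2\,x\,(\boldsymbol{z}-\alpha)$ does make the whole computation collapse, the elimination of $x^4$ leaves the quadratic relation you state, and clearing $(\boldsymbol{z}-\alpha)$ reproduces exactly the cubic with $g_2^{}=3\,\alpha^2+\gamma$, $g_3^{}=\alpha^3-\gamma\,\alpha-\beta^2$ (consistent with \eqref{g23a}); the second expression for $y$ indeed follows from the first by one application of the cubic, so the change of variables involves only $\mathbb{C}(\alpha,\beta)$. However, your route is genuinely different from the paper's. The paper does not verify the stated formulae by substitution; it \emph{derives} them transcendentally: $x$ is a degree-two function on the curve, so it is written as $\zeta(\mathfrak{u})-\zeta(\mathfrak{u}-v)-C$ with $y=dx/d\mathfrak{u}$, and the $\zeta,\wp$-addition theorems force $C=\zeta(v)$, $\alpha=\wp(v)$, $\beta=\wpp(v)$, $\gamma=g_2^{}-3\,\wp^2(v)$; the birational map of the proposition is then just the $(\wp,\wpp)$-rewriting of \eqref{zp} with $\boldsymbol{z}=\wp(\mathfrak{u})$, $\boldsymbol{w}=\wpp(\mathfrak{u})$. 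What your argument buys is elementarity and uniformity: it is a purely polynomial identity, explains transparently why $\gamma$ never enters the substitution, and needs no analytic apparatus. What the paper's argument buys is the origin of the formulae together with the parameter dictionary \eqref{abc} ($v=\wp^{\smin1}(\alpha)$, etc.), which is exactly what is reused afterwards (Corollary~\ref{C6} and the remark on the free parameter $\mathfrak{u}_{\sss0}$); a pure verification proves the proposition as stated but does not produce that dictionary. One small slip in your motivational sentence: with your normalization the double pole of $\boldsymbol{z}$ sits at the branch where $y\sim-x^2$ (there $x^2-y\sim2x^2$), not at $y\sim+x^2$, where $\boldsymbol{z}\to\alpha$; this is harmless, since nothing in your substitution argument depends on it.
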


\begin{proof}
The variable $x$, as a function on the curve \eqref{wz}, has two simple
poles. If $\mathfrak{u}$ denotes a uniformizer for \eqref{wz} then we
may place these poles at points $\mathfrak{u}=\{0,v\}$ and hence write
\begin{equation}\label{zp}
\begin{aligned}
x&=\zeta(\mathfrak{u};\g2,\g3)- \zeta(\mathfrak{u}-v;\g2,\g3)- C\,,
\\
y&=\wp(\mathfrak{u}-v;\g2,\g3)-\wp(\mathfrak{u};\g2,\g3)\,,
\end{aligned}
\end{equation}
that is $y=\frac{dx}{d\mathfrak{u}}$. Let us manage parameters $\g2$,
$\g3$, $v$, and $C$ to turn \eqref{wz}, \eqref{zp} into an identity in
$\mathfrak{u}$. A calculation with use of $\zeta,\wp$-addition theorems
yields the constant  $C$ and mutual computability of parameters
$(\alpha,\beta,\gamma)\leftrightarrows(v,\g2,\g3)$. We obtain that
$C=\zeta(v;\g2,\g3)$ and
\begin{gather}
\alpha=\wp(v;\g2,\g3)\,,\quad \beta=\wp'(v;\g2,\g3)\,,\quad
\gamma=\g2-3\,\wp^2(v;\g2,\g3)\,,\notag\\\label{abc}
v=\wp^{\sm1}(\alpha;\g2,\g3)\,,\quad \g2=3\,\alpha^2+\gamma\,,\qquad
\g3=\alpha^3-\gamma\,\alpha-\beta^2\,.\notag
\end{gather}
This expressions follow from the fact that  pair $(\alpha,\beta)$ lies
on the curve $\beta^2=4\,\alpha^3-\g2\,\alpha-\g3$. This also implies
the transcendental version
$$
y^2=x^4-6\,\wp(v)\,x^2+4\,\wp'(v)\,x+ \big\{\g2-3\,\wp^2(v)\big\}
$$
of algebraic equation \eqref{wz} itself. The birational isomorphism
stated in the proposition is then just a $(\wp,\wp')$-version of
Eqs.~\eqref{zp} under notation $\boldsymbol{z}\DEF\wp(\mathfrak{u})$
and $\boldsymbol{w}\DEF\wp'(\mathfrak{u})$.
\end{proof}

In this connection it should be mentioned Weierstrass'
$\wp,\wp'$-formulae in \cite[{\bf I}: pp.~118--120]{halphen},
\cite[{\bf V}]{we2}, \cite{a}, \cite[{\bf IV}: pp.~66--67]{tannery} and
in the inaugural dissertation by Biermann
\cite[\S\,1]{biermann}\footnote{A student to whom Weierstrass leaved
the problem as an exercise; it became an introductory clause of
\cite{biermann}.} wherein the problem of transition between
Weierstrass' cubic and quartic equation \eqref{wz} was posed for the
first time. Complete form to Biermann's birational transformations has
been given in \emph{Exs}.~2--3 of \cite[20$\boldsymbol{\cdot}$6]{WW}.
Formulae presented in \cite[p.~6]{biermann} and
\cite[20$\boldsymbol{\cdot}$6]{WW} are, however, rather complicated, so
their simplest form (supplemented by a parameters computation) is given
by Proposition~\ref{P2}, formulae \eqref{zp}, and Theorem~\ref{T11}. It
is know that transformations between two forms of one elliptic curve
may contain a free parameter and we can readily introduce it into
Proposition~\ref{P2}. To do this it is sufficient to change
$\mathfrak{u}\mapsto\mathfrak{u}-\mathfrak{u}_{\s0}$ and make use of
addition theorems in (re)definitions
$\boldsymbol{z}\DEF\wp(\mathfrak{u}-\mathfrak{u}_{\s0})$ and
$\boldsymbol{w}\DEF\wp'(\mathfrak{u}-\mathfrak{u}_{\s0})$. Clearly, the
$\mathfrak{u}_{\s0}$ is more convenient quantity  than the `algebraic
parameters' entering into the Biermann--Whittaker--Watson formulae.

The immediate (and known \cite{halphen,a}) consequence of the technique
above is an application to  the cubic and quartic equations. With use
of Theorem~\ref{T11} we can present their roots in a completely closed
and analytic  form. Solutions to cubic $4\,x^3-a\,x-b=0$ are obvious;
these are the Weierstrassian points
$x_\kappa=\big\{e(\tau),\ep(\tau),\epp(\tau)\big\}$ under
$\tau=\tau(a,b)$ as above.

\begin{corollary}\label{C6}
Closed and radical-free formula for roots $x_\kappa$ of the quartic
equation
$$
x^4-6\,\alpha\,x^2+4\,\beta\,x+\gamma=0
$$
reads as follows
$$
x_\kappa=2\,\zeta\Big( \Mfrac12\,v+\omega_\kappa\Big)-
\zeta(v+2\,\omega_\kappa)\,,\qquad
\omega_\kappa=\om\cdot\{0,1,\tau,\tau+1 \}\,.
$$
Here,
$$
v=\int\limits_\infty^{\;\alpha}\!\!\!\frac{dz}
{\sqrt{4z^3-(3\,\alpha^2+\gamma)\,z-\alpha^3+\gamma\,\alpha+\beta^2}}
\,,\qquad
\om^2=
\frac{3\,\alpha^2+\gamma}{\alpha^3-\gamma\,\alpha-\beta^2}\cdot
\frac{\g3(\tau)}{\g2(\tau)}\,,
$$
$$
\tau=\ri\,\frac{{\pow{P}{\!\!\sm\s[5]{1\!/\!6}}{\,\,\s[5]0}}
{\big(}{-}\sqrt{\boldsymbol{\mathfrak{g}}\,}\big)}
{\pow{P}{\!\!\sm\s[5]{1\!/\!6}}{\,\,\s[5]0}
\big(\sqrt{\boldsymbol{\mathfrak{g}}\,}\big)}\,,\qquad
\boldsymbol{\mathfrak{g}}=
27\,\frac{(\alpha^3-\gamma\,\alpha-\beta^2)^2}
{(3\,\alpha^2+\gamma)^3}
$$
and the arbitrary value of the Weierstrass elliptic integral is taken.
\end{corollary}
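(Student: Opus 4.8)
The plan is to read the roots off the uniformization established in Proposition~\ref{P2}. By that proposition the quartic curve $y^2 = x^4 - 6\alpha x^2 + 4\beta x + \gamma$ is birationally the Weierstrass cubic $\boldsymbol w^2 = 4\boldsymbol z^3 - (3\alpha^2+\gamma)\boldsymbol z - (\alpha^3-\gamma\alpha-\beta^2)$, with $\boldsymbol z = \wp(\mathfrak u)$, $\boldsymbol w = \wpp(\mathfrak u)$ and
$$
x = \zeta(\mathfrak u) - \zeta(\mathfrak u - v) - \zeta(v), \qquad y = \frac{dx}{d\mathfrak u} = \wp(\mathfrak u - v) - \wp(\mathfrak u),
$$
where $v = \wp^{-1}(\alpha)$ is taken relative to the invariants $g_2 = 3\alpha^2+\gamma$, $g_3 = \alpha^3-\gamma\alpha-\beta^2$ (formulae \eqref{abc}). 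The four roots $x_\kappa$ of the quartic are exactly the $x$-values at the points of the elliptic curve with $y = 0$: on the desingularization of $y^2=$ (a monic quartic) the two points over $x = \infty$ have $y \ne 0$, every finite point with $y = 0$ lies over a root, and over each root of the quartic there is exactly one point (the double cover branches there). Equivalently, the $x_\kappa$ are the four branch points of the degree-two map $\mathfrak u \mapsto x(\mathfrak u)$.

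First I would identify the hyperelliptic involution on $\mathbb C/\Lambda$ realizing $(x,y)\mapsto(x,-y)$. Since $\zeta$ is odd, $x(v - \mathfrak u) = \zeta(v-\mathfrak u) + \zeta(\mathfrak u) - \zeta(v) = x(\mathfrak u)$, so this involution is $\mathfrak u \mapsto v - \mathfrak u$; differentiating the identity $x(v-\mathfrak u) = x(\mathfrak u)$ shows that $y = dx/d\mathfrak u$ is anti-invariant and hence vanishes precisely at the fixed points. The fixed points are the four solutions of $2\mathfrak u \equiv v \pmod\Lambda$, namely $\mathfrak u_\kappa = \tfrac12 v + \omega_\kappa$ with $\omega_\kappa$ ranging over the half-periods $\{0,1,\tau,\tau+1\}$ of $\wp(\cdot|\tau)$; since the elliptic function $\wp(\mathfrak u - v) - \wp(\mathfrak u)$ has order four, these exhaust the zeros of $y$. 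It remains to evaluate $x$ at $\mathfrak u_\kappa$. Writing $\mathfrak u_\kappa - v = -(\tfrac12 v + \omega_\kappa) + 2\omega_\kappa$ and using the quasi-period law $\zeta(w + 2\omega_\kappa) = \zeta(w) + 2\zeta(\omega_\kappa)$ together with oddness of $\zeta$,
$$
\zeta(\mathfrak u_\kappa - v) = -\zeta(\tfrac12 v + \omega_\kappa) + 2\zeta(\omega_\kappa),\qquad \zeta(v) + 2\zeta(\omega_\kappa) = \zeta(v + 2\omega_\kappa),
$$
whence $x_\kappa = 2\zeta(\tfrac12 v + \omega_\kappa|\tau) - \zeta(v + 2\omega_\kappa|\tau)$, the asserted formula.

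Finally, the modulus $\tau$ is furnished by Theorem~\ref{T11} applied to the companion cubic $\boldsymbol w^2 = 4\boldsymbol z^3 - a\boldsymbol z - b$ with $a = g_2 = 3\alpha^2+\gamma$, $b = g_3 = \alpha^3-\gamma\alpha-\beta^2$: it gives $\boldsymbol{\mathfrak g} = 27\,b^2/a^3 = 27(\alpha^3-\gamma\alpha-\beta^2)^2/(3\alpha^2+\gamma)^3$ and $\tau = \ri\, P^{0}_{-1/6}\big({-}\sqrt{\boldsymbol{\mathfrak g}}\,\big) / P^{0}_{-1/6}\big(\sqrt{\boldsymbol{\mathfrak g}}\,\big)$, while $v = \wp^{-1}(\alpha;g_2,g_3)$ is the Weierstrassian integral named in the statement. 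The main difficulty is structural rather than computational: one must be sure the hyperelliptic involution is exactly $\mathfrak u \mapsto v - \mathfrak u$ and that $dx/d\mathfrak u$ vanishes precisely on the four half-period translates $\tfrac12 v + \omega_\kappa$, so that the four values $x(\mathfrak u_\kappa)$ are exactly the roots (with multiplicities) and nothing spurious appears. It is also worth noting that the result is independent of the branch of $\wp^{-1}$: replacing $v$ by $-v$, or by $v$ plus a period, merely permutes the $\omega_\kappa$, which removes the apparent ambiguity in the statement; degenerate configurations (coinciding roots, or $\alpha$ a branch point) then follow by continuity, both sides being algebraic in $(\alpha,\beta,\gamma)$.
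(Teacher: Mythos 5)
Your argument is correct and follows essentially the route the paper intends (it states Corollary~\ref{C6} as an immediate consequence of Proposition~\ref{P2} and Theorem~\ref{T11}): the roots are the $x$-values at the zeros of $y=\wp(\mathfrak{u}-v)-\wp(\mathfrak{u})$, i.e.\ at $\mathfrak{u}_\kappa=\tfrac12 v+\omega_\kappa$, and the quasi-periodicity of $\zeta$ plus Theorem~\ref{T11} for $\tau$ gives the stated formula.
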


These formulas provide analytic single-valued expressions for roots
$x_\kappa$ as functions of coefficients and, thereby, the problem of
multi-valuedness and jumps between roots does not appear here in
contrast to the standard (cumbersome) radical-type formulae.

To summarize briefly, we may conclude that both the transformation
between elliptic curves and modular inversion do not require any
auxiliary constructions; in each case the ultimate answer is given by
an explicit analytic formula independently of Weierstrassian,
Legendrian, or general representation \eqref{ell4}. Hence, solution to
the problem \eqref{J} may be thought of as completely solved and we
return to integration of the basic equations \eqref{X}--\eqref{Dtau}
and \eqref{intA}.

\section{Noncanonical $\theta$-functions\label{S9}}

\subsection{Exponential quadratic extension of
$\theta$-functions\label{S9.1}}

The system of equations \eqref{X} ($\hhence$\eqref{x}) has fifth order,
whereas Weierstrassian base of functions $(\sigma,\zeta,\wp)$ has the
order three. In the canonical case $\bA=\bB=1$ one can easily derive
the differential equation satisfied by any of canonical Jacobi's
$\theta$-functions:
\begin{equation}\label{w}
\pow{F}{z}{2}= -4\,\bigg\{F+
{\frac{\pi^2}{3}}(\vartheta_3^4+\vartheta_4^4)\bigg\} \bigg\{F+
{\frac{\pi^2}{3}}(\vartheta_2^4-\vartheta_4^4)\bigg\} \bigg\{F-
{\frac{\pi^2}{3}}(\vartheta_2^4+\vartheta_3^4)\bigg\},
\end{equation}
where $F\DEF\ln_{\mathit{zz}}\! \theta_k(z|\tau)+4\,\eta$ and
$\vartheta=\vartheta(\tau)$, $\eta=\eta(\tau)$.

In the non-canonical case $\bA\ne 1\ne \bB$ this equation must have an
analog in form of some differential equation of 5th order. It is not
difficult to obtain from \eqref{x} that each $\theta$-solution of
Eqs.~\eqref{x} satisfies one equation
\begin{equation}\label{F}
\begin{array}{c}
\ds F^2F_{\mathit{zzz}}-2\,F F_z F_{\mathit{zz}}+\pow Fz3+
(F^4)_z=0\,,\\[2ex]
\ds F\DEF(\ln\theta)_{\mathit{zz}}^{}+4\,\bigg\{\eta+\frac{\pi^2}{12}
(\vartheta_3^4+\vartheta_4^4)\bigg\}\,.
\end{array}
\end{equation}
This important equation, as  equation of 3rd order for function $F$, is
the generalization of  a 2nd order differential consequence of the
canonical Weierstrassian equation $\pow Fz2=4\,F^3-\g2F-\g3$, that is
equation $F_{\mathit{zzz}}=12\,FF_z$. The latter \emph{is not a
reduction} of Eq.~\eqref{F}, although equation \eqref{F} is also solved
by $\wp$-function:
$$
F=\wp(\om|\om,\om')-\wp(z+c|\om,\om')
$$
with free constants $(\om,\om',c)$. The distinction between them lies
in the fact that as long as we do not require the differential closure
of $\theta$'s, it is sufficient to use one Weierstrassian equation
\eqref{w}. In both of these cases periods $(2\,\om,2\,\om')$ and
modulus $\tau$ appear as integration constants. Equation \eqref{F} is
easily integrated if we rewrite it in the form
\begin{equation}\label{alg}
\bigg(\frac{1}{F_z} \mbig[7](\frac{\pow
Fz2}{F}\mbig[7])_{\!\!z}\bigg)_{\!\!z} \!+8\,F_z=0\,.
\end{equation}
We thus derive the complete integral of equations \eqref{x}, whatever
parameters $\vartheta$, $\eta$. Let
$$
M\DEF\varkappa^2\bigg\{\eta(\btau)+ \frac{\pi^2}{12}
\big[\vartheta_3^4(\btau)+\vartheta_4^4(\btau)\big]\!\bigg\}
-\bigg\{\eta+\frac{\pi^2}{12} \big[\vartheta_3^4+\vartheta_4^4\big]
\bigg\}\,.
$$
Then some routine computations yield the following result.

\begin{theorem}\label{T12}
Differential equations \eqref{X} and \eqref{x} have the following
general solution\/$:$
\begin{equation}\label{Xgeneral}
\begin{alignedat}{4}
\pm\,\theta_1={}&&\frac{\vartheta_2\vartheta_3\vartheta_4}
{2\,\ded^3(\btau)}\cdot{}&
C\,\theta_1(\varkappa\,z+B|\btau)\:\re^{2M(z+A)^2}\,,\\
\pm\,\theta_2={}&&\frac{\varkappa\,\vartheta_2}{\vartheta_2(\btau)}
\cdot{}&
C\,\theta_2(\varkappa\,z+B|\btau)
\:\re^{2M(z+A)^2} \,,\\
\pm\,\theta_3={}&&
\frac{\varkappa\,\vartheta_3}{\vartheta_3(\btau)}\cdot{}
&C\,\theta_3(\varkappa\,z+B|\btau) \:\re^{2M(z+A)^2}\,,
\\
\pm\,\theta_4={}&&\frac{\varkappa\,\vartheta_4}
{\vartheta_4(\btau)}\cdot{}
&C\,\theta_4(\varkappa\,z+B|\btau) \:\re^{2M(z+A)^2}\,,
\\
{\pm}\,\Dtheta={}&&\frac{\vartheta_2\vartheta_3\vartheta_4}
{2\,\ded^3(\btau)}\cdot{}&
C\,\big\{ \varkappa\,\Dtheta(\varkappa\,z+B|\btau)+
4M(z+A)\,\theta_1(\varkappa\,z+B|\btau)\big\}\:\re^{2M(z+A)^2}\,,
\end{alignedat}
\end{equation}
where $\{A,B,C,\varkappa,\btau\}$ is a complete set of integration
constants and signs $\pm$ may be freely changed for arbitrary pair
$(\theta_j,\theta_k)$.
\end{theorem}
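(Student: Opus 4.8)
The plan is to collapse the five coupled first--order equations \eqref{X} (equivalently \eqref{x}) to a single scalar third--order ODE, integrate that in closed form, and reconstruct the other four functions algebraically. First I would note that any $\theta$--solution of \eqref{X} satisfies $\Dtheta=\partial_z\theta_1$ (the first equation) and, solving the fifth equation for $\theta_2^2$ and using $\theta_1^2\,\partial_z(\Dtheta/\theta_1)=\theta_1^2(\ln\theta_1)_{zz}$, that $\theta_2^2=-\,\theta_1^2 F/(\pi^2\vartheta_3^2\vartheta_4^2)$ with $F\DEF(\ln\theta_1)_{zz}+4\{\eta+\frac{\pi^2}{12}(\vartheta_3^4+\vartheta_4^4)\}$ exactly the quantity in \eqref{F}. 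Feeding this back into the remaining equations (or, equivalently, into the algebraic integrals \eqref{A12}) gives $\theta_3^2,\theta_4^2$ as the same quadratic--in--$\theta_1$ combinations built from $F$ and the constants $\bA,\bB$, and --- as already observed in Sect.~\ref{S9.1} --- forces $F$ to obey the closed equation \eqref{F}. So the whole system reduces to integrating \eqref{F} and then one quadrature for $\theta_1$.

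Second I would integrate \eqref{F} through its divergence form \eqref{alg}: two successive quadratures yield $F_z^2=-4F^3+aF^2+bF$ with only two constants $a,b$ (the absence of a third constant of integration is precisely the content of \eqref{alg}). A translation of $F$ removing the quadratic term turns this into a Weierstrass cubic, so $F=e_\star-\wp(z+c\,|\,\om,\omp)$ with $e_\star=\wp(\om|\om,\omp)$ automatically a branch point; the three free constants $(\om,\omp,c)$ I would repackage by homogeneity as $\varkappa\DEF1/(2\om)$, $\btau\DEF\omp/\om$, $B\DEF\varkappa c$. Integrating $(\ln\theta_1)_{zz}=F-4\{\eta+\frac{\pi^2}{12}(\vartheta_3^4+\vartheta_4^4)\}$ twice, using $\wp=-(\ln\sigma)_{zz}$, the homogeneity of $\sigma$, and the conversion \eqref{st} from $\sigma(2\zeta|\btau)$ to $\theta_1(\zeta|\btau)$, produces $\theta_1=(\text{const})\,\theta_1(\varkappa z+B|\btau)\,\re^{(\text{quadratic in }z)}$. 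The two new constants are the multiplicative one (absorbed, together with the normalisation $\vartheta_2\vartheta_3\vartheta_4/2\ded^3(\btau)$, into $C$) and the linear coefficient of the exponent, which on completing the square becomes $A$. The crucial bookkeeping point is that the $z^2$--coefficient of that exponent is exactly $2M$: using the branch--point value $\wp(1|1,\btau)=\frac{\pi^2}{12}(\vartheta_3^4(\btau)+\vartheta_4^4(\btau))$ one gets $e_\star=\frac{\pi^2}{3}\varkappa^2(\vartheta_3^4(\btau)+\vartheta_4^4(\btau))$, and $\tfrac12 e_\star-2\eta-\tfrac{\pi^2}{6}(\vartheta_3^4+\vartheta_4^4)+2\varkappa^2\eta(\btau)$ collapses to $2M$.

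Third, with $\theta_1$ fixed I would recover $\theta_{2,3,4}$ from the quadratic relations of the first step. Writing $\wp(z+c|\om,\omp)=4\varkappa^2\wp\big(2(\varkappa z+B)\,\big|\,\btau\big)$ and applying the canonical formula \eqref{wp2} at modulus $\btau$ turns $-F/(\pi^2\vartheta_3^2\vartheta_4^2)$ into $\dfrac{\varkappa^2\vartheta_3^2(\btau)\vartheta_4^2(\btau)}{\vartheta_3^2\vartheta_4^2}\cdot\dfrac{\theta_2^2(\varkappa z+B|\btau)}{\theta_1^2(\varkappa z+B|\btau)}$; taking the square root and using $2\ded^3(\btau)=\vartheta_2(\btau)\vartheta_3(\btau)\vartheta_4(\btau)$ collapses the prefactor precisely to $\varkappa\vartheta_2/\vartheta_2(\btau)$, and likewise for $\theta_3,\theta_4$. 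Then $\Dtheta=\partial_z\theta_1$ gives the $+4M(z+A)\,\theta_1(\varkappa z+B|\btau)$ term. The five constants $\{A,B,C,\varkappa,\btau\}$ equal the order of the system, so \eqref{Xgeneral} is the general solution; the independently changeable $\pm$ for each pair $(\theta_j,\theta_k)$ is legitimate since \eqref{X} is invariant under reversing the signs of any two $\theta$'s simultaneously. I would finish with a direct substitution of \eqref{Xgeneral} into \eqref{X}, which is routine once one knows $\theta_k(\cdot|\btau)$ solve \eqref{X'} with modulus $\btau$.

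The main obstacle is the second step's bookkeeping rather than any of the quadratures: the scalar reduction constrains each $\theta_k$ only up to an independent exponential--linear gauge and an independent quadratic factor, so the work lies in showing that the constant appearing in $F$ is forced to make the $z^2$--exponent equal $2M$, and that the scale $\varkappa$, shift $B$, and modulus $\btau$ of all five functions are forced to coincide with the prefactors written above. This is exactly where one must use the first--order relations \eqref{A12}, the $\wp$--formula \eqref{wp2}, and the Jacobi identity $2\ded^3=\vartheta_2\vartheta_3\vartheta_4$, not \eqref{F} alone.
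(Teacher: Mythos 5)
Your proposal is correct and follows essentially the same route the paper sketches: reduce \eqref{X} to the scalar equation \eqref{F}, integrate it via the divergence form \eqref{alg} to a Weierstrass cubic $F=e_\star-\wp(z+c|\om,\omp)$, convert back to canonical theta-series with new modulus $\btau$ and scale $\varkappa$ using \eqref{st} and \eqref{wp2}, recover $\theta_{2,3,4}$ and $\Dtheta$ from the algebraic integrals \eqref{A12} and $2\ded^3=\vartheta_2\vartheta_3\vartheta_4$, and check by substitution. Your bookkeeping of the exponent (coefficient $2M$) and of the prefactors is accurate, so this matches the paper's "routine computations" in detail.
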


Solution \eqref{Xgeneral} shows that its dependence on $\varkappa$ and
$\btau$ is rather nontrivial in comparison with its dependence on
constants $B$, $C$ and the linear exponent $\re^{Az}$; these are easily
`guessable' in solution \eqref{sol}. An additional point to emphasize
is that dependence of both equation \eqref{F} and its solution
\eqref{Xgeneral} on parameters $(\vartheta, \eta)$ is represented,
omitting the trivial multiplicative constant $C$, through the one
essential parameter
\begin{equation}\label{param}
\frac14\,\Lambda=\eta+
\frac{\pi^2}{12}\big(\vartheta_3^4+\vartheta_4^4\big)\,.
\end{equation}
Using  \eqref{Xgeneral}, we can, after some algebra, rewrite  integrals
\eqref{A12} in the `$(\varkappa,\btau)$-representation'.

\begin{corollary}[Generalization of Jacobi's identities]
Noncanonical $\vartheta,\theta$-functions satisfy the  identities
\begin{equation}\label{omega}
\vartheta_2^2\,\theta_4^2-\vartheta_4^2\,\theta_2^2=\varkappa^2
\frac{\vartheta_3^4(\btau)}{\vartheta_3^4}\cdot
\vartheta_3^2\,\theta_1^2\,, \qquad
\vartheta_2^2\,\theta_3^2-\vartheta_3^2\,\theta_2^2=\varkappa^2
\frac{\vartheta_4^4(\btau)}{\vartheta_4^4}\cdot
\vartheta_4^2\,\theta_1^2\,.
\end{equation}
\end{corollary}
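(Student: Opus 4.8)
The plan is to evaluate the algebraic integrals $\bA^4,\bB^4$ of Theorem~\ref{T9} (see \eqref{A12}) directly on the explicit general solution \eqref{Xgeneral} furnished by Theorem~\ref{T12}; the two relations \eqref{omega} are then nothing but the resulting values of $\bA^4$ and $\bB^4$, re-substituted into \eqref{A12}. The key simplification is that every line of \eqref{Xgeneral} carries the common factor $G\DEF C\,\re^{2M(z+A)^2}$ to the first power, and that each entry occurring in \eqref{A12} is a quadratic monomial in the $\theta$'s. Hence $G^2$ — together with the multiplicative constant $C$, the linear exponent, the shift $B$, and all the $\varkappa$-dependence beyond an overall $\varkappa^2$ (these last being carried inertly inside the canonical series $\theta_k(\varkappa z+B|\btau)$) — factors out of numerator and denominator of the ratio $(\vartheta_2^2\theta_4^2-\vartheta_4^2\theta_2^2)/(\vartheta_3^2\theta_1^2)$ and cancels. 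The free signs in \eqref{Xgeneral} are irrelevant, since each $\theta_k$ enters squared.

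Concretely, writing $\zeta\DEF\varkappa\,z+B$ and inserting $\theta_j=\dfrac{\varkappa\,\vartheta_j}{\vartheta_j(\btau)}\,\theta_j(\zeta|\btau)\,G$ ($j=2,3,4$) and $\theta_1=\dfrac{\vartheta_2\vartheta_3\vartheta_4}{2\,\ded^3(\btau)}\,\theta_1(\zeta|\btau)\,G$ from \eqref{Xgeneral}, one finds
$$\vartheta_2^2\theta_4^2-\vartheta_4^2\theta_2^2=\varkappa^2\,G^2\,\vartheta_2^2\vartheta_4^2\Big\{\frac{\theta_4^2(\zeta|\btau)}{\vartheta_4^2(\btau)}-\frac{\theta_2^2(\zeta|\btau)}{\vartheta_2^2(\btau)}\Big\}=\varkappa^2\,G^2\,\vartheta_2^2\vartheta_4^2\cdot\frac{\vartheta_3^2(\btau)\,\theta_1^2(\zeta|\btau)}{\vartheta_2^2(\btau)\vartheta_4^2(\btau)}\,,$$
the last equality being the canonical Jacobi relation (the first identity of \eqref{j}, equivalently the instance of \eqref{sign} for the canonical series with modulus $\btau$). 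On the other hand $\vartheta_3^2\theta_1^2=G^2\,\vartheta_3^2\,\dfrac{\vartheta_2^2\vartheta_3^2\vartheta_4^2}{4\,\ded^6(\btau)}\,\theta_1^2(\zeta|\btau)$, and $4\,\ded^6(\btau)=\vartheta_2^2(\btau)\vartheta_3^2(\btau)\vartheta_4^2(\btau)$ by the second relation in \eqref{ded}. Dividing, all common factors cancel and
$$\bA^4=\frac{\vartheta_2^2\theta_4^2-\vartheta_4^2\theta_2^2}{\vartheta_3^2\theta_1^2}=\varkappa^2\,\frac{\vartheta_3^4(\btau)}{\vartheta_3^4}\,,$$
which is the first identity of \eqref{omega}. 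The second follows verbatim from $\vartheta_2^2\theta_3^2-\vartheta_3^2\theta_2^2$ together with the companion canonical identity $\vartheta_2^2(\btau)\theta_3^2(\zeta|\btau)-\vartheta_3^2(\btau)\theta_2^2(\zeta|\btau)=\vartheta_4^2(\btau)\theta_1^2(\zeta|\btau)$, giving $\bB^4=\varkappa^2\vartheta_4^4(\btau)/\vartheta_4^4$.

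I expect essentially no obstacle here: the computation is pure bookkeeping which, once the common factors are stripped, reduces to the two elementary algebraic facts already available — the canonical quadratic $\theta$-identities \eqref{j} and $2\,\ded^3=\vartheta_2\vartheta_3\vartheta_4$. The only point worth a moment's care is to confirm that the sign ambiguities and the exponential factor in \eqref{Xgeneral} play no role, which is immediate from the quadratic homogeneity of \eqref{A12} in the $\theta$'s; equivalently, since Theorem~\ref{T9} has already shown $\bA,\bB$ to be first integrals, it suffices to read off their constant values from the single substitution \eqref{Xgeneral}.
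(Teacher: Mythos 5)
Your proposal is correct and follows essentially the paper's own route: the paper obtains \eqref{omega} precisely by substituting the general solution \eqref{Xgeneral} into the integrals \eqref{A12} ("after some algebra"), and your bookkeeping — cancelling the common factor $C\,\re^{2M(z+A)^2}$, applying the canonical identities \eqref{j} at argument $\varkappa z+B$ and modulus $\btau$, and using $2\ded^3(\btau)=\vartheta_2(\btau)\vartheta_3(\btau)\vartheta_4(\btau)$ — reproduces exactly the values $\bA^4=\varkappa^2\vartheta_3^4(\btau)/\vartheta_3^4$ and $\bB^4=\varkappa^2\vartheta_4^4(\btau)/\vartheta_4^4$, consistent with \eqref{intO}. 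Your remarks on the irrelevance of the $\pm$ signs and the quadratic homogeneity in the $\theta$'s are accurate, so no gap remains.
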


The canonical Weierstrass--Jacobi case is defined by the restriction
$\bA=\bB=1$ and, therefore, is equivalent to conditions on constants
$(\btau,\varkappa)$: $\vartheta(\btau)=\vartheta$, $\eta(\btau)=\eta$,
and $\varkappa=\pm1$.

It should be particularly emphasized the following. Notwithstanding the
fact that non-canonical case is realized through the elementary
function---the quadratic exponent $\re^{2Mz^2}$---it depends highly
non-trivially on constants $(\varkappa,\btau)$ and generates a
\emph{transcendental extension} since canonical $\sigma$- and
$\theta$-functions are defined up to a linear exponent by formulae
\eqref{sol}. Dependence of the extension on parameters $\vartheta$ and
$\eta$ is also nontrivial. Quasi-periodicity properties of the
$\Dtheta,\theta$-extensions, \ie, analogs of formulae \eqref{shift},
are readily  established from Eqs.~\eqref{Xgeneral} and
Theorem~\ref{T6}; we do not display them here.

\begin{remark}\label{R8}
A brief mention of the quadratic exponential multiplier in front of
Jacobian function $\Theta(u)$ can be found in \cite[pp.~156,
189]{cayley1}. Such a function was also considered by Jacobi himself;
the pages 307--318 of his \cite[{\bf I}]{jacobi} are devoted to  study
of the object $\chi(u)=\re^{r\,uu}\,\Omega(u)$, where
$\int_{\s0}^u\!E(u)\,du=\log\Omega(u)$ under the standard Jacobi's
notation for $E(u)$ and $r$ has a special (not generic) value. The
first appearance of the quadratic exponent can be found even in
\emph{Fundamenta Nova} \cite[{\bf I}:~p.~226]{jacobi}. In connection
with certain differential identities and heat equation for
$\theta$-functions this exponent appears also in \cite{ohyama}.
\end{remark}

Before going further we pause to comment on the nature of the
integrability of Eqs.~\eqref{X}.

\subsection{Algebraic integrability of \odes\ for
$\theta$-functions\label{S9.2}}

Equation \eqref{alg} discloses an interesting feature of canonical and
non-canonical $\theta$-series. Let us define the term algebraic
integrability as a property of differential equations that have
solutions in terms of finitely many integrals of algebraic functions.

\begin{theorem}\label{T13}
Differential equations \eqref{X} are algebraically integrable upon
adjoining an inversion of integrals.
\end{theorem}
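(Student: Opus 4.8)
The plan is to exhibit, for any solution of the system \eqref{X}, a finite chain of quadratures of algebraic functions that recovers all five components. The starting point is the scalar third-order equation \eqref{F} for the single function $F\DEF(\ln\theta_k)_{\mathit{zz}}+4\bigl\{\eta+\frac{\pi^2}{12}(\vartheta_3^4+\vartheta_4^4)\bigr\}$, which holds for $\theta=\theta_k$ along any solution of \eqref{X}. First I would use the conservative rewriting \eqref{alg}, namely $\bigl(\frac{1}{F_z}(F_z^{2}/F)_z\bigr)_z+8F_z=0$, to integrate once: this gives $\frac{1}{F_z}(F_z^{2}/F)_z+8F=c_1$ for a first integration constant $c_1$, which is a \emph{first-order} ODE relating $F$ and $F_z$ after one more quadrature; integrating the resulting separable-type relation produces an algebraic equation of the form $F_z^{2}=P_3(F)$ with $P_3$ a cubic whose coefficients are $c_1,c_2$ and the constants $\eta,\vartheta$. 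Thus $z=\int dF/\sqrt{P_3(F)}$ is an elliptic (hyperelliptic-degree-three) integral, and $F$ is obtained by inverting it — this is the "adjoining an inversion of integrals" in the statement.

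Next, once $F(z)$ is known as the inversion of an algebraic integral, I would recover $\theta_k$ itself by a further pair of quadratures: $(\ln\theta_k)_{\mathit{zz}}=F-\Lambda$ (with $\Lambda$ as in \eqref{param}), so $\ln\theta_k=\iint (F-\Lambda)$, an explicit double primitive of an algebraic-in-$F$ function composed with the inverse-integral $F(z)$. Having one $\theta_k$ and $\Dtheta$ (obtained from $\ln_z\theta_k$ by one more quadrature together with the algebraic relations), the remaining $\theta_j$ are pinned down purely algebraically: equations \eqref{X} express $\partial_z\theta_j/\theta_j$ in terms of $\Dtheta/\theta_1$ and ratios $\theta_\nu\theta_\mu/\theta_1$, and the algebraic integrals \eqref{A12}, \eqref{jacobiA} give polynomial relations among the $\theta$'s with the constants $\bA,\bB,\boldsymbol{\mathfrak A}$; solving this algebraic system determines all ratios $\theta_j:\theta_k$, hence all $\theta_j$ up to the already-found common factor. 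This confirms that the entire five-dimensional solution is built from: two ordinary quadratures (the two integrations of \eqref{alg}), one inversion of an elliptic/algebraic integral ($z\mapsto F$), two more quadratures (for $\ln\theta_k$ and for $\Dtheta$), and finitely many algebraic operations.

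For completeness I would also note the consistency of this count with Theorem~\ref{T12}: the general solution \eqref{Xgeneral} depends on five constants $\{A,B,C,\varkappa,\btau\}$, and our chain produces exactly the right number of integration constants — $c_1,c_2$ from the two quadratures of \eqref{alg}, two more from the double primitive $\iint(F-\Lambda)$, and one from the $\Dtheta$-quadrature, with the underlying elliptic modulus $\btau$ entering precisely through the branch data of $\int dF/\sqrt{P_3(F)}$ (compare Eq.~\eqref{J}), while $\varkappa$ is the scaling freedom in the inversion. The routine part is checking that the coefficients of the cubic $P_3(F)$ come out polynomial in $c_1,c_2,\eta,\vartheta$ — this is the "some routine computations" already invoked before Theorem~\ref{T12} — and that the algebraic elimination of the remaining $\theta_j$ using \eqref{A12}--\eqref{jacobiA} has a solution, which it does because \eqref{Xgeneral} already exhibits one.

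The main obstacle is making precise that the step $z=\int dF/\sqrt{P_3(F)}$ is genuinely an \emph{algebraic} integral and not something worse: one must verify that both integrations of \eqref{alg} really collapse to a relation of the stated algebraic form rather than introducing logarithmic or exponential terms. Concretely, after the first integration one has a linear first-order ODE for the quantity $G\DEF F_z^{2}/F$ in the variable $z$ with $F$-dependent coefficients; I expect the substitution $dz = dF/F_z$ to convert it into an exact differential in $F$ whose integration yields $F_z^{2}$ as a polynomial in $F$ — the degree being forced to three by the $(F^4)_z$ term in \eqref{F}. Pinning down that this polynomial is exactly cubic (so the inversion is elliptic, matching the role of $\btau$) rather than of higher degree, and that no transcendental integrating factor is needed, is the delicate point; once that is in hand the rest is quadratures and algebra as above.
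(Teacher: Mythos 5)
Your proposal follows essentially the same route as the paper's proof: two quadratures of \eqref{alg} reduce the problem to $F_z^{2}=-4F^{3}+c_1F^{2}+c_2F$, so $F$ is the inversion of the holomorphic elliptic integral \eqref{holo}, and $\theta$ is then recovered as $\exp\iint F$ multiplied by the quadratic exponential $\re^{-\frac12\Lambda z^2+\boldsymbol{d}z+\boldsymbol{e}}$, exactly as in \eqref{necanon}, with the remaining components obtained by quadrature and algebra. The ``delicate point'' you flag resolves in one line: the first integration gives $\bigl(F_z^{2}/F\bigr)_z=(c_1-8F)\,F_z$, whose right-hand side is the exact $z$-derivative of $c_1F-4F^{2}$, so the second integration produces precisely a cubic with zero constant term (hence the factor $x(x-\boldsymbol{a})(x-\boldsymbol{b})$ in \eqref{holo}) and no transcendental terms arise.
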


\begin{proof}
By virtue of \eqref{alg} we may write
\begin{equation}\label{holo}
\int\limits^{\;F}\!\!
\frac{dx}{\sqrt{x\,(x-\boldsymbol{a})(x-\boldsymbol{b})}}=
2\,\ri\,z+\boldsymbol{c}\qquad \hence\qquad
F=\Xi(z;\boldsymbol{a},\boldsymbol{b},\boldsymbol{c})\,,
\end{equation}
where $\boldsymbol{a},\boldsymbol{b},\boldsymbol{c}$ are some
integration constants. Integration is thus completed if we introduce
the inversion operation $\Xi$:
\begin{equation}\label{necanon}
\theta=\exp\! \int\limits^{\;z}\!\!\mbig[7]\{
\mbox{\footnotesize$\ds\int\limits^{\;x}$}
\Xi(y;\boldsymbol{a},\boldsymbol{b},\boldsymbol{c})\,dy\mbig[7]\}\, dx
\cdot
\re_{\mathstrut}^{\sm\s{\frac12}\Lambda\,z^2+\boldsymbol{d}\,z+
\boldsymbol{e}}\,,
\end{equation}
where $ \boldsymbol{d},\boldsymbol{e}$ are new constants. The inversion
function $\Xi$ here is of course not understood to be a ratio of
$\theta$-series. Integration to the $\Dtheta$  is obvious.
\end{proof}

The two-fold integration of the inversion operation in \eqref{necanon}
can now be reduced to a 1-fold integration of the algebraic function
and this leads to a meromorphic integral rather than the holomorphic
one as in \eqref{holo}. By this means we obtain the following
nonstandard way of introducing the theta-function.

\begin{corollary}\label{C7}
The canonical $\theta$-series, along with its non-canonical extension,
can be defined through a meromorphic elliptic integral.
\end{corollary}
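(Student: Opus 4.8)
The plan is to unwind the two-fold integration in formula \eqref{necanon} so that the theta-function is presented as the exponential of a \emph{single} integral of an algebraic function, and then to identify that single integral as a meromorphic elliptic integral. Recall from the proof of Theorem~\ref{T13} that $F=(\ln\theta)_{zz}+\Lambda$ satisfies the holomorphic-type relation \eqref{holo}, so that $F$ is, up to the affine normalisation by $\Lambda$, the Weierstrass $\wp$-function of the curve $w^2=x(x-\boldsymbol a)(x-\boldsymbol b)$ evaluated along $2\ri z+\boldsymbol c$; equivalently $F=\wp(\om|\om,\omp)-\wp(z+c|\om,\omp)$ as already noted after \eqref{alg}. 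The key observation is that $(\ln\theta)_z=\int^z\!\bigl(F-\Lambda\bigr)\,dy$ is, by the classical relation $\int\!\wp=-\zeta$, nothing but a Weierstrassian $\zeta$-function (plus a linear term), i.e. the canonical \emph{meromorphic} elliptic integral of the second kind. Hence one integration of the holomorphic differential in \eqref{holo} already produces a meromorphic integrand, and the remaining outer integration in \eqref{necanon} is exactly the definition of $\ln\theta$ in terms of that meromorphic integral.

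Concretely, I would proceed as follows. First, starting from \eqref{alg} integrate once to get $F_z{}^{\!2}/F$ expressed through $F$, recovering a cubic $F_z{}^{\!2}=-4\,F(F-\boldsymbol a)(F-\boldsymbol b)$ with $\boldsymbol a,\boldsymbol b$ built from the parameters $(\vartheta,\eta,\bA,\bB)$; this is the statement \eqref{holo}, identifying $F+\text{const}$ with a $\wp$-function. Second, use the standard identity $\wp(u)=-\dfrac{d}{du}\zeta(u)$ to write $\displaystyle\int^{z}\! F\,dy = -\,\zeta\bigl(2\ri z+\boldsymbol c\bigr)\cdot\frac{1}{2\ri}+\Lambda z + \text{const}$, so that $(\ln\theta)_z$ is a meromorphic elliptic integral (a $\zeta$-function) plus a linear polynomial in $z$; the linear polynomial is absorbed into the quadratic exponential factor $\re^{-\frac12\Lambda z^2+\boldsymbol d z+\boldsymbol e}$ already present in \eqref{necanon}. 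Third, integrate once more: $\ln\theta = \displaystyle\int^{z}\!\Bigl(\text{meromorphic integral}\Bigr)\,dx$, and observe that this is precisely the familiar representation $\sigma(u)=\exp\!\int^{u}\!\zeta(y)\,dy$ transported through \eqref{st}. Thus both the canonical $\theta$-series (case $\bA=\bB=1$, $\varkappa=\pm1$) and its non-canonical extension \eqref{Xgeneral} arise as $\theta=\exp\!\int^{z}\!\bigl(\text{meromorphic elliptic integral}\bigr)\,dx$ times the elementary factor $\re^{-\frac12\Lambda z^2+\boldsymbol d z+\boldsymbol e}$, which proves the corollary; the companion object $\Dtheta=\theta_z$ requires no further integration.

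The main obstacle, such as it is, is bookkeeping rather than conceptual: one must check that the constants $\boldsymbol a,\boldsymbol b,\boldsymbol c$ produced by integrating \eqref{alg} are genuinely the branch points and base point of an \emph{honest} elliptic curve (discriminant nonzero in the generic non-canonical regime), so that the inversion $\Xi$ in \eqref{holo} really is $\wp$ and not a degenerate trigonometric/rational limit — this is exactly the ``generic situation'' already isolated after Theorem~\ref{T10}. Granting that, the passage from the holomorphic integral in \eqref{holo} to the meromorphic one is the single substantive step, and it is furnished verbatim by the classical pair of Weierstrass relations $\wp=-\zeta'$, $\zeta=(\ln\sigma)'$ together with the connection formula \eqref{st} between $\theta_1$ and $\sigma$. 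I would close by remarking that this reproves, from the ``differential viewpoint'' of the paper, the well-known fact that $\log\theta_1$ is a meromorphic Abelian integral, now uniformly for the extended family \eqref{Xgeneral}.
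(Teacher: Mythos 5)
Your proposal is correct and follows essentially the paper's own route: the paper proves the corollary by the single change of variable $\int^{z}\Xi(y;\boldsymbol a,\boldsymbol b,\boldsymbol c)\,dy=\int^{\Xi(z;\boldsymbol a,\boldsymbol b,\boldsymbol c)}\!x\,dx\big/\sqrt{x(x-\boldsymbol a)(x-\boldsymbol b)}$ in \eqref{necanon}, i.e.\ precisely your step of trading the inner integration of the inversion function for a second-kind (meromorphic) elliptic integral, leaving $\theta=\exp\int^{z}\{\text{meromorphic integral}\}\,dx$ times the quadratic-exponential factor. Your dressing of this step in the $\wp$--$\zeta$--$\sigma$ language is only a restatement of the same substitution (the paper deliberately avoids naming $\Xi$ as a canonical Weierstrass object), so no genuinely different argument is involved and no gap remains beyond the nondegeneracy bookkeeping you already flag.
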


To prove this it will suffice to make the following change in formula
\eqref{necanon}:
$$
\int\limits^{\;z}\!
\Xi(y;\boldsymbol{a},\boldsymbol{b},\boldsymbol{c})\,dy=
\!\!\int\limits^{\,\Xi(z;\boldsymbol{a},\boldsymbol{b},\boldsymbol{c})}
\hspace{-1em}
\frac{z\,dz}{\sqrt{z\,(z-\boldsymbol{a})(z-\boldsymbol{b})}}\,.
$$

To all appearances, Tikhomandritski\u\i\  \cite{tikh2} was the first to
point out a way  of definition of $\theta$ (different from described
above) through a meromorphic integral but his old note \cite{tikh2}
went unnoticed in the literature.

\begin{theorem}\label{T14}
Dynamical systems defining non-canonical extensions of
$\theta$-functions \textup{\eqref{X}--\eqref{Dtau}} and
$\vartheta$-constants \eqref{intA} are Hamiltonian. They admit the
gradient flow forms $\dot X=\Omega\,\nabla\!\!\mathscr{H}(X)$ with
Poisson brackets $\Omega=\Omega(X)$ which may not be the constant ones.
\end{theorem}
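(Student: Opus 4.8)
The plan is to treat the three systems~\eqref{X}$\Leftrightarrow$\eqref{x}, \eqref{Dtau} and~\eqref{intA} one at a time, in every case starting from the algebraic integrability already established in Theorems~\ref{T9}, \ref{T10} and~\ref{T13}. On a joint level surface of its rational integrals each system degenerates to a one--degree--of--freedom elliptic flow of Weierstrass type, which is tautologically Hamiltonian; what remains is to reassemble, on the full set of original coordinates, a Poisson bivector $\Omega(X)$ for which those integrals become Casimir functions and for which $\dot X=\Omega\,\nabla H(X)$ reproduces the system. Because the elliptic cores involved are nonlinear in the ``momentum'' --- witness the term $\Dtheta^{2}/\theta_1$ in the last line of~\eqref{X} and the term $2\,\eta^{2}$ in~\eqref{intA} --- no linear change of variables flattens $\Omega$ to a constant matrix, which is exactly the content of the clause $\Omega=\Omega(X)$.

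For the $z$--system~\eqref{X} I would take $\bA^{4}$, $\bB^{4}$ of~\eqref{A12} as the candidate Casimirs. Restricting to a joint level set and introducing $\mathbf{P}=\theta_{2}^{2}/\theta_{1}^{2}$, one lands on the Weierstrass cubic~\eqref{wp}, so that the $z$--dynamics is carried by the single elliptic variable $\mathbf{P}$ (equivalently by $F$ of~\eqref{F}, cf.\ the integrated form~\eqref{alg}), while $\theta_{1}$ and $\Dtheta$ are recovered by two quadratures organised by the relation $\partial_z\theta_1=\Dtheta$. The elliptic block is the standard flow $\dot q=p$, $\dot p=\tfrac12 P_{3}'(q)$; I would prolong it by those quadratures, solving for a Hamiltonian $H$ and a weight $w=w(\theta_1,\Dtheta,\dots)$ from the defining relations, which forces $w$ to depend on $\theta_1$. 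Assembling this with the two Casimirs gives $\Omega$ on the five--dimensional $\theta$--space, and its Jacobi identity should reduce precisely to the compatibility already verified in the proof of Theorem~\ref{T9}. The non--autonomous $\tau$--system~\eqref{Dtau} is brought to the same footing either by adjoining $\tau$ to the phase space or, more economically, by transporting the $z$--structure along the heat equation~\eqref{heat}.

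For the $\vartheta$--constant system~\eqref{intA} I would use that, with $\bA,\bB$ held fixed, the triple $(\vartheta_{3},\vartheta_{4},\eta)$ is differentially closed and carries the rational first integral $\boldsymbol{\mathfrak A}^{4}$ of Theorem~\ref{T10}. Any autonomous flow on a three--dimensional space with one first integral admits a generalised (Nambu) Hamiltonian form $\dot X=\rho(X)^{-1}\,\nabla C\times\nabla H$, i.e.\ a rank--two Poisson bracket with Casimir $C=\boldsymbol{\mathfrak A}^{4}$; for $H$ I would take a primitive, along the flow, of the octic combination $\vartheta_{3}^{8}+(9\bA^{4}\bB^{4}-6\bA^{4}-6\bB^{4}+2)\,\vartheta_{3}^{4}\vartheta_{4}^{4}+\vartheta_{4}^{8}$ standing on the right of the $\eta$--equation, and then solve the ensuing linear equations for the conformal density $\rho(\vartheta_{3},\vartheta_{4},\eta)$, checking that it comes out rational. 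The explicit quadrature behind all of this is again~\eqref{alg}, one integration of~\eqref{F}.

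The hard part is not existence in principle --- Theorem~\ref{T13} already guarantees that, on a generic integral level set, each system is a one--degree--of--freedom elliptic flow and hence Hamiltonian --- but the construction of a \emph{globally defined rational} bivector $\Omega$ on the native $\theta$-- and $\vartheta$--coordinates whose Jacobi identity holds \emph{identically}, not merely modulo the constraints, and for which~\eqref{A12} and~\eqref{jacobiA} are honest Casimirs. Checking the Jacobi identity for the non--constant $\Omega$, and the rationality of $w$ (resp.\ of $\rho$), is the computational heart of the matter and coincides with the ``complete Hamiltonian description'' deferred by the paper to a separate work; here I would be content to exhibit one admissible pair $(\Omega,H)$ in each case and to verify $\dot X=\Omega\,\nabla H$ by direct substitution.
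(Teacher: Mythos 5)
Be aware first of what the paper itself does here: it does \emph{not} prove Theorem~\ref{T14} in this text. It states explicitly that the proof ``will be given elsewhere'' because the systems ``require the even-dimensional extensions'' which are ``non-obvious in advance'', and it substantiates the claim only by exhibiting one concrete example: for the four-dimensional system \eqref{intA} it writes down an explicit pair $(\Omega,H)$ with $H=\bA^4\,\vartheta_3^4/\vartheta_2^4-\bB^4\,\vartheta_4^4/\vartheta_2^4$ (essentially the integral \eqref{jacobiA}) and a degenerate, single-valued, non-constant bracket on the native coordinates $(\vartheta_2,\vartheta_3,\vartheta_4,\eta)$, verified by direct substitution. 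Your proposal is, like the paper, a programme rather than a proof --- you concede at the end that the ``computational heart'' is exactly what the paper defers --- but it never delivers the one thing the paper does deliver, namely an explicit admissible $(\Omega,H)$ for at least one of the systems.

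Beyond that, the sketch has concrete defects. For \eqref{intA} you propose a Nambu-type bracket on the closed triple $(\vartheta_3,\vartheta_4,\eta)$ with Casimir $C={\boldsymbol{\mathfrak{A}}}^4$; but by \eqref{jacobiA} one has ${\boldsymbol{\mathfrak{A}}}^4=(\bA^4\vartheta_3^4-\bB^4\vartheta_4^4)/\vartheta_2^4$, which depends on $\vartheta_2$ and is therefore not a function on that three-dimensional space, so the construction does not start; the paper instead keeps all four variables and uses precisely this quantity as the \emph{Hamiltonian}, not as a Casimir, with the degeneracy of $\Omega$ absorbing the difference. Your alternative Hamiltonian, ``a primitive along the flow'' of the octic in the $\eta$-equation, is not shown to be a rational or even single-valued function of the phase variables, which is the whole issue. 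For the five-dimensional $z$-system \eqref{X}, demanding that both $\bA^4$ and $\bB^4$ of \eqref{A12} be Casimirs forces a bivector of even rank at most two on an odd-dimensional space, hence a third Casimir that you never identify; this parity obstruction is exactly why the paper speaks of non-obvious even-dimensional extensions, and your plan of reducing to $\boldsymbol{\mathrm{P}}=\theta_2^2/\theta_1^2$ via \eqref{wp} and restoring $\theta_1,\Dtheta$ ``by quadratures'' reproves the algebraic integrability of Theorem~\ref{T13} but does not by itself produce a bivector on the original coordinates whose Jacobi identity holds identically and for which the flow is $\Omega\,\nabla H$. Finally, the treatment of the $\tau$-system \eqref{Dtau} (``adjoin $\tau$'' or ``transport along the heat equation'') is left entirely unexamined. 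So there is a genuine gap: the proposal records plausible heuristics but neither reproduces the paper's explicit example for \eqref{intA} nor closes any of the steps that the paper itself acknowledges as the hard, deferred part.
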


This theorem enhances results on algebraic integrability
(Theorem~\ref{T13}) but its proof and consequences will be given
elsewhere because the systems under question require the
even-dimensional extensions; these are non-obvious in advance. Here, we
give just an example, that, on the one hand, generalizes Hamiltonicity
of the canonical version of Eqs.~\eqref{intA} described in
\cite[Theorem~13]{br3} and, on the other hand, is a rational subcase of
a pencil of the brackets found in the same work. Let $\mathscr{U}$,
$\mathscr{V}$, and $\mathscr{W}$ be defined as three vector field
components for system \eqref{intA} as follows:
$$
\begin{aligned}
\mathscr{U}(\vartheta,\eta) &
\DEF \frac{\ri}{\pi}\bigg\{\eta+\frac{\pi^2}{12}\,
\big(\vartheta_3^4+\vartheta_4^4 -3\bB^4\vartheta_4^4\big)\bigg\}\,
\vartheta_3\,,\\
\mathscr{V}(\vartheta,\eta) &
\DEF \frac{\ri}{\pi}\bigg\{\eta+\frac{\pi^2}{12}\,
\big(\vartheta_3^4+\vartheta_4^4
-3\bA^4\vartheta_3^4\big)\bigg\}\,\vartheta_4\,,
\\
\mathscr{W}(\vartheta,\eta) &\DEF \frac{\ri}{\pi}\,2\,\eta^2-
\frac{\pi^3}{72}\,\ri\,\mbig\{\vartheta_3^8+
\big(9\bA^4\bB^4-6\bA^4-6\bB^4+2\big)\,\vartheta_3^4\,\vartheta_4^4+
\vartheta_4^8 \mbig\}
\end{aligned}
$$
and let the $\boldsymbol{\mathfrak{A}}$-integral \eqref{jacobiA} be
taken as a Hamilton function. Then one can show and verify by a
straightforward computation the following result.

\begin{theorem}
The system~\eqref{intA} admits the gradient flow  form
$$
\frac{d}{d\tau}\!\!
\Smaller[2]{\left\lgroup\Larger[2]{\vbox to 6ex{}}\right.}
\hspace{-0.6em}
\begin{array}{c}\vartheta_2\\ \vartheta_3\\\vartheta_4\\\eta
\end{array}\hspace{-0.7em}
\Smaller[2]{\left.\Larger[2]{\vbox to 6ex{}}\right\rgroup}
=\thinunderbrace{\frac{\vartheta_2}{4\mathscr{H}}
\Smaller[2]{\left\lgroup\Larger[2]{\vbox to 6ex{}}\right.}
\hspace{-0.6em}
\begin{array}{lccc}
\phantom{-}\,0&\mathscr{U}&\mathscr{V}&\mathscr{W}\\
-\mathscr{U}&0&0&0\\-\mathscr{V}&0&0&0\\-\mathscr{W}&0&0&0\\
\end{array}
\hspace{-0.4em}
\Smaller[2]{\left.\Larger[2]{\vbox to 6ex{}}\right\rgroup\!\!\!\!}%
}{\Omega(\vartheta,\eta)}
\Smaller[2]{\left\lgroup\Larger[2]{\vbox to 6ex{}}\right.}
\hspace{-0.6em}
{\begin{array}{l}
\mathscr{H}_{\vartheta_2}\\\mathscr{H}_{\vartheta_3}\\
\mathscr{H}_{\vartheta_4}\\\mathscr{H}_{\eta}
\end{array}}
\hspace{-0.7em}
\Smaller[2]{\left.\Larger[2]{\vbox to 6ex{}}\right\rgroup}
$$
under the Hamilton function
$$
\mathscr{H}(\vartheta_2,\vartheta_3,\vartheta_4,\eta)=
\bA^4\frac{\vartheta_3^4}{\vartheta_2^4}-
\bB^4\frac{\vartheta_4^4}{\vartheta_2^4}\,.
$$
Corresponding Poisson bracket is degenerated
$(\det\Omega(\vartheta,\eta)\equiv0)$ but single-valued.
\end{theorem}

In  \cite{br4} we also showed that the algebraic integrability of
$\theta$-functions may be treated as a Liouvillian extension of certain
differential fields. It therefore has an intimate connection with the
Picard--Vessiot solvability of spectral problems defined by linear
\odes.

\subsection{Renormalization of $\theta$-functions\label{S9.3}}

Solution \eqref{Xgeneral} suggests us to make the following
renormalization $\theta\mapsto\boldsymbol\theta$:
$$
\boldsymbol\theta_1=\theta_1\,, \qquad
\boldsymbol\theta_2=\pi\,\vartheta_3\,\vartheta_4
\cdot\theta_2\,, \qquad
\boldsymbol\theta_3=\pi\,\vartheta_2\,\vartheta_4
\cdot\theta_3\,, \qquad
\boldsymbol\theta_4=\pi\,\vartheta_2\,\vartheta_3
\cdot\theta_4\,.
$$
Then equations \eqref{X} and \eqref{F} will contain the single
parameter \eqref{param}. We get
\begin{equation}\label{Xbold}
\hspace{-0.6em}
\begin{array}{c}
\ds\frac{\partial\boldsymbol\theta_1}{\partial
z}=\boldsymbol\Dtheta\,, \qquad
\frac{\partial\boldsymbol\Dtheta}{\partial z}=
\frac{\boldsymbol\Dtheta{}^2}{\boldsymbol\theta_1}-
\frac{\boldsymbol\theta_2^2}{\boldsymbol\theta_1}- \Lambda
\cdot\boldsymbol\theta_1\,.
\\[3ex]
\ds\frac{\partial\boldsymbol\theta_2}{\partial z}=
\frac{\boldsymbol\Dtheta}{\boldsymbol\theta_1}\,\boldsymbol\theta_2-
\frac{\boldsymbol\theta_3\boldsymbol\theta_4}{\boldsymbol\theta_1}\,,
\qquad
\frac{\partial\boldsymbol\theta_3}{\partial z}=
\frac{\boldsymbol\Dtheta}{\boldsymbol\theta_1}\,\boldsymbol\theta_3-
\frac{\boldsymbol\theta_2\boldsymbol\theta_4}{\boldsymbol\theta_1}\,,
\qquad \frac{\partial\boldsymbol\theta_4}{\partial z}=
\frac{\boldsymbol\Dtheta}{\boldsymbol\theta_1}\,\boldsymbol\theta_4-
\frac{\boldsymbol\theta_2\boldsymbol\theta_3}{\boldsymbol\theta_1}\,.
\end{array}\hspace{-1em}
\end{equation}
It immediately follows  that the $\tau$-dependence, that is
$\tau$-differentiating the $\theta$-functions \eqref{Dtau}, is also
simplified. Putting for simplicity $\tau=4\pi\ri\,t$, we obtain the
system of equations
\begin{equation}\label{Dtaubold}
\begin{aligned}
\frac{\partial\boldsymbol\theta_1}{\partial\, t}&=
\frac{\boldsymbol\Dtheta{}^2}
{\boldsymbol\theta_1}-\frac{\boldsymbol\theta_2^2}
{\boldsymbol\theta_1}- \Lambda\cdot\boldsymbol\theta_1\,, \qquad
\frac{\partial\boldsymbol\Dtheta}{\partial\, t}=
\frac{\boldsymbol\Dtheta{}^3} {\boldsymbol\theta_1^2}-
3\big(\boldsymbol\theta_2^2+
\Lambda\cdot\boldsymbol\theta_1^2\big)
\,\frac{\boldsymbol\Dtheta}{\boldsymbol\theta_1^2}+
2\,\frac{\boldsymbol\theta_2\boldsymbol\theta_3 \boldsymbol\theta_4}
{\boldsymbol\theta_1^2}\,,\\
\frac{\partial\boldsymbol\theta_2}{\partial\, t}&=
\frac{\boldsymbol\Dtheta{}^2}{\boldsymbol\theta_1^2}\,
\boldsymbol\theta_2-2\,\boldsymbol\Dtheta\,
\frac{\boldsymbol\theta_3\boldsymbol\theta_4}
{\boldsymbol\theta_1^2}-
\big(\boldsymbol\theta_2^2-\boldsymbol\theta_3^2-
\boldsymbol\theta_4^2\big)\,
\frac{\boldsymbol\theta_2}{\boldsymbol\theta_1^2} -
\mbig\{\Lambda-\ln_t^{}(\vartheta_3\vartheta_4)
\mbig\}\cdot\boldsymbol\theta_2\,,\\
\frac{\partial\boldsymbol\theta_3}{\partial\, t}&=
\frac{\boldsymbol\Dtheta{}^2}
{\boldsymbol\theta_1^2}\,\boldsymbol\theta_3-
2\,\boldsymbol\Dtheta\,
\frac{\boldsymbol\theta_2\boldsymbol\theta_4}
{\boldsymbol\theta_1^2}+\boldsymbol\theta_4^2
\,\frac{\boldsymbol\theta_3}{\boldsymbol\theta_1^2} -
\mbig\{\Lambda-\ln_t^{}(\vartheta_2\vartheta_4)\mbig\}\cdot
\boldsymbol\theta_3\,,\\
\frac{\partial\boldsymbol\theta_4}{\partial\, t}&=
\frac{\boldsymbol\Dtheta{}^2}
{\boldsymbol\theta_1^2}\,\boldsymbol\theta_4-
2\,\boldsymbol\Dtheta\,
\frac{\boldsymbol\theta_2\boldsymbol\theta_3}
{\boldsymbol\theta_1^2}+\boldsymbol\theta_3^2\,
\frac{\boldsymbol\theta_4}{\boldsymbol\theta_1^2}-
\mbig\{\Lambda-\ln_t^{}(\vartheta_2\vartheta_3)\mbig\}\cdot
\boldsymbol\theta_4\,.
\end{aligned}
\end{equation}

With use of these equations the mechanism of integrability for
$\theta$-functions (and analysis at all) becomes very simple. Another
point that should be emphasized here is an asymmetry of equations. It
manifests the fact that we may not use Jacobi's polynomial
$\theta$-identities \eqref{sign} in advance since that identities are
just particular integrals of Eqs.~\eqref{Xbold} and \eqref{Dtaubold};
the latter are constructed from the heat equation
${\boldsymbol\theta}_t=\boldsymbol\theta_{\!\mathit{zz}}$ by
definition. Furthermore, equations \eqref{Dtaubold} contain not
variables $\vartheta$ but their logarithmic derivatives. Because of
this, compatibility conditions
$\boldsymbol\theta_{tz}=\boldsymbol\theta_{\!zt}$ for these equations
will be 1) \emph{algebraic} relations between functions
$\boldsymbol\theta$ over field of coefficients $\Lambda$,
$\ln_t^{}\!\vartheta$ and, on the other hand, 2) the only differential
relation containing $\Lambda_t^{}\FED\dot{\mathrm\Lambda}$. A
computation yields
\begin{equation}\label{Lambda}
\begin{array}{c}\ds
\frac{\dot\vartheta_2}{\vartheta_2}+\Lambda=0\,, \qquad
\frac{\dot\vartheta_3}{\vartheta_3}+\Lambda=
\frac{\boldsymbol\theta_3^2-\boldsymbol\theta_2^2}
{\boldsymbol\theta_1^2}\,, \qquad
\frac{\dot\vartheta_4}{\vartheta_4}+\Lambda=
\frac{\boldsymbol\theta_4^2-\boldsymbol\theta_2^2}
{\boldsymbol\theta_1^2}\,,
\\[3ex] \ds
\dot{\mathrm\Lambda}-2\,\bigg(\frac{\dot\vartheta_3}{\vartheta_3}+
\frac{\dot\vartheta_4}{\vartheta_4}\bigg)\Lambda-2\,
\frac{\dot\vartheta_3}{\vartheta_3}\,
\frac{\dot\vartheta_4}{\vartheta_4}=0
\end{array}
\end{equation}
and we see that parameter $\ln_t^{}\vartheta_2$ is not an independent
one but enters into the theory fictitiously, through $\Lambda$. The
first of Eqs.~\eqref{Lambda} is in effect the first equation in
\eqref{intA}. Right hand sides of 2nd and 3rd of Eqs.~\eqref{Lambda}
are functions of $z$ but their left hand sides are of $t$. This leads
again to the algebraic integrals \eqref{A12}:
$$
\boldsymbol\theta_3^2-\boldsymbol\theta_2^2=
\bB^4\vartheta_4^4\cdot \boldsymbol\theta_1^2\,,\qquad
\boldsymbol\theta_4^2-\boldsymbol\theta_2^2=\bA^4
\vartheta_3^4\cdot \boldsymbol\theta_1^2
$$
and  2nd and 3rd  equations  in \eqref{intA}. The fourth equation is
obvious. An important point here is the fact that the primary object of
the theory---compatibility condition---manifests itself not as the
symmetrical equations \eqref{var} but as the non-symmetrical ones
\eqref{intA}. Furthermore, because systems
\eqref{Xbold}--\eqref{Dtaubold} are nonlinear ones, we should use an
additional differentiation in \eqref{Lambda} to eliminate
$\boldsymbol\theta$ completely. For symmetry we take the three
quantities
$$
2\,\bigg(\frac{\dot\vartheta_2}{\vartheta_2},\:
\frac{\dot\vartheta_3}{\vartheta_3},\:
\frac{\dot\vartheta_4}{\vartheta_4}\bigg)\FED(X,Y,Z)
$$
and obtain at once the following equations:
\begin{equation}\label{darboux}
\dot X=(Y+Z)\,X-YZ\,,\qquad \dot
Y=(X+Z)\,Y-XZ\,,\qquad \dot Z=(X+Y)\,Z-XY\,.
\end{equation}
This is the famous Darboux--Halphen system \cite[{\bf
I}:~p.~331]{halphen} and one of its consequence is  equivalent to
Eq.~\eqref{halphen}. The scale change $X\mapsto\frac12\,X$ turns it
into equation
\begin{equation}\label{X3}
\dddot{X\,}\!\big(\dot X-X^2\big)
= \ddot X\,(\ddot X-4\,X^3) -2\,\dot X^2\big(\dot X-3\,X^2\big)\,.
\end{equation}
Applications of  system \eqref{darboux} are very well known. See, \eg,
works by Ablowitz et all \cite{ablowitz}, \cite[p.~577]{conte},
Takhtajan \cite{tah}, and Conte \cite[pp.~143, 147]{conte}. As a vacuum
cosmological model these equations come from a particular case of the
Bianchi-IX model \cite{chakr}.

Thus, renormalization of $\theta$-functions trivializes the integration
scheme of the defining \odes\  and clarifies interrelations between
differential properties of $\vartheta,\theta$-functions, the heat
equation, the Darboux--Halphen system, and their consequences. All the
equations are integrated in terms of canonical and non-canonical
theta-series.

\begin{remark}\label{R9}
We can continue renormalization
$\boldsymbol\theta\mapsto\tilde{\!\boldsymbol\theta}$ by putting\,
${\tilde{\!\boldsymbol{\theta}}}=\boldsymbol\theta\,
\exp\!\big({\frac12\Lambda z^2}\big)$. Then parameter $\Lambda$
disappears in Eqs.~\eqref{Xbold} as if we put $\Lambda=0$ there; the
integrability conditions then become the simple algebraic relations
$$
Y=X+2\,\pi^2\varkappa^2\,\vartheta_4^4(\btau)\,,\qquad
Z=X+2\,\pi^2\varkappa^2\,\vartheta_3^4(\btau)\,,
$$
and the only differential equation of  Riccati type  for function
$X(t)$ with variable coefficients $\varkappa=\varkappa(t)$,
$\btau=\btau(t)$:
$$
\dot X=X^2+2\,\pi^2\varkappa^2
\big\{\vartheta_3^4(\btau)+\vartheta_4^4(\btau)\big\}\cdot
X-4\,\pi^4\varkappa^4\,\vartheta_3^4(\btau)\,\vartheta_4^4(\btau)\,.
$$
This equation is  also integrable since it is a consequence of
equations \eqref{Lambda}--\eqref{X3}.
\end{remark}

\subsection{General integrals\label{S9.4}}

Insomuch as equations \eqref{darboux}, \eqref{halphen}, \eqref{X3}
serve both the canonical and non-canonical case, the general integral
of equations \eqref{intA} is a variation of solutions \eqref{JacChaz}.
Let us denote the four integration constants for the system
\eqref{intA} as $(a,b,c,d)$. Let us next put
$\T\DEF\frac{a\,\tau+b}{c\,\tau+\delta}$, where, as usual,
$a\,\delta-b\,c=1$, so $\delta$ is not a free parameter.

\begin{theorem}\label{T15}
General solution to non-canonical dynamical system \eqref{intA} is as
follows\,$:$
\begin{equation}\label{T}
\hspace{-1em}
\begin{array}{c}
\ds\vartheta_2= d\,\frac{\vartheta_2(\T)}
{\sqrt{c\,\tau+\delta\,}}\,,\qquad \vartheta_3=
\frac{1}{\bA}\,\frac{\vartheta_3
(\T)}{\sqrt{c\,\tau+\delta\,}}\,,\qquad \vartheta_4= \frac{1}{\bB}\,
\frac{\vartheta_4(\T)}{\sqrt{c\,\tau+\delta\,}}\,,\\[3ex]
\ds\eta=\frac{1}{(c\,\tau+\delta)^2}\bigg\{ \eta(\T)+
\frac{\pi^2}{12}\, \mbig[4][(1-\bA^{\sm4})
\, \vartheta_3^4(\T)+
(1-\bB^{\sm4})\, \vartheta_4^4(\T)\mbig[4]]\bigg\} +
\frac{1}{2}\,\frac{\pi\,\ri\,c}{c\,\tau+\delta}\,,
\end{array}  \hspace{-1.5em}
\end{equation}
where $\vartheta_k(\T)$ and $\eta(\T)$ are understood to be the
canonical $\vartheta,\eta$-series \eqref{consts} and \eqref{eta}.
\end{theorem}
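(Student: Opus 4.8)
The plan is to reduce the non-canonical system \eqref{intA} to the canonical one \eqref{var} by an explicit invertible rescaling of the dependent variables, and then to transport the classical solution \eqref{JacChaz}. On a fixed orbit of \eqref{intA} the quantities $\bA$, $\bB$ of Theorem~\ref{T9} and the value $\boldsymbol{\mathfrak{A}}$ of the integral \eqref{jacobiA} (Theorem~\ref{T10}) are constants, and I would introduce the new unknowns
\[
\hat\vartheta_2 := \boldsymbol{\mathfrak{A}}\,\vartheta_2\,,\qquad \hat\vartheta_3 := \bA\,\vartheta_3\,,\qquad \hat\vartheta_4 := \bB\,\vartheta_4\,,\qquad \hat\eta := \eta + \frac{\pi^2}{12}\big[(1-\bA^4)\,\vartheta_3^4 + (1-\bB^4)\,\vartheta_4^4\big]\,.
\]
The point of rescaling $\vartheta_2$ by $\boldsymbol{\mathfrak{A}}$ rather than by some free constant is that the triple $(\hat\vartheta_2,\hat\vartheta_3,\hat\vartheta_4)$ then satisfies the ordinary Jacobi identity \eqref{324}: indeed \eqref{jacobiA} gives $\hat\vartheta_2^4 = \boldsymbol{\mathfrak{A}}^4\vartheta_2^4 = \bA^4\vartheta_3^4 - \bB^4\vartheta_4^4 = \hat\vartheta_3^4 - \hat\vartheta_4^4$.

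First I would check that $(\hat\vartheta_2,\hat\vartheta_3,\hat\vartheta_4,\hat\eta)$ solves the canonical system \eqref{var}. Differentiating the definitions, inserting the right-hand sides of \eqref{intA}, and eliminating $\vartheta_3^4,\vartheta_4^4$ by the relation above, the three $\hat\vartheta$-equations collapse to those of \eqref{var}: the $\bA$- and $\bB$-dependent terms in \eqref{intA} are exactly compensated by the shift built into $\hat\eta$. The fourth equation requires more care — one computes $d\hat\eta/d\tau$ from the definition and the four equations of \eqref{intA}, collects the monomials $\eta^2$, $\eta\vartheta_3^4$, $\eta\vartheta_4^4$, $\vartheta_3^8$, $\vartheta_3^4\vartheta_4^4$, $\vartheta_4^8$, and matches them against the $\eta$-equation of \eqref{var} rewritten via \eqref{324}. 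This is precisely the step that pins down the coefficient $9\bA^4\bB^4 - 6\bA^4 - 6\bB^4 + 2$ in \eqref{intA}.

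Having done this, $(\hat\vartheta_k,\hat\eta)$ is a solution of \eqref{var} satisfying \eqref{324}, hence it lies on the invariant locus $\boldsymbol{\mathfrak{A}}=1$ of Proposition~\ref{P1}, whose general solution is the known pair \eqref{JacChaz}; thus $\hat\vartheta_k = (c\tau+\delta)^{-1/2}\vartheta_k(\T)$ and $\hat\eta = (c\tau+\delta)^{-2}\eta(\T) + \tfrac12\pi\ri\,c(c\tau+\delta)^{-1}$ for constants with $a\delta - bc = 1$, the series being those of \eqref{consts} and \eqref{eta}. I would then invert the rescaling: divide $\hat\vartheta_3,\hat\vartheta_4$ by $\bA,\bB$, set $d := \boldsymbol{\mathfrak{A}}^{-1}$ (absorbing the harmless fourth root of unity) so that $\vartheta_2 = d\,(c\tau+\delta)^{-1/2}\vartheta_2(\T)$, and solve the definition of $\hat\eta$ for $\eta$, using the same canonical expressions for $\vartheta_3^4,\vartheta_4^4$ together with $(1-\bA^4)\bA^{-4} = -(1-\bA^{-4})$. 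This reproduces exactly formulae \eqref{T}. For completeness I would note that \eqref{intA} is a four-dimensional autonomous system and that \eqref{T} carries four independent integration constants $a,b,c,d$ (with $\delta$ fixed by $a\delta - bc = 1$), the reduction above being a bijection onto the three-parameter family \eqref{JacChaz} fibred over $\boldsymbol{\mathfrak{A}}\in\mathbb{C}\setminus\{0\}$; alternatively, local existence and uniqueness together with the observation that generic initial data are attained already yields generality.

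The main obstacle is the bookkeeping in the second paragraph, and specifically the verification of the $\eta$-equation: a wrong sign or a dropped factor of $\pi^2$ there would break the whole reduction, whereas the three $\vartheta$-equations, the inversion, and the parameter count are routine once the rescaling and the use of the integral \eqref{jacobiA} have been spotted.
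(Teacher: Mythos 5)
Your proposal is correct, and I checked the key computation: with $\hat\vartheta_2={\boldsymbol{\mathfrak{A}}}\vartheta_2$, $\hat\vartheta_3=\bA\vartheta_3$, $\hat\vartheta_4=\bB\vartheta_4$, $\hat\eta=\eta+\frac{\pi^2}{12}\big[(1-\bA^4)\vartheta_3^4+(1-\bB^4)\vartheta_4^4\big]$, the three $\hat\vartheta$-equations of \eqref{var} do reduce, after eliminating $\hat\vartheta_2^4$ via \eqref{jacobiA}, exactly to the coefficients $\vartheta_3^4+\vartheta_4^4-3\bB^4\vartheta_4^4$ and $\vartheta_3^4+\vartheta_4^4-3\bA^4\vartheta_3^4$ of \eqref{intA}, and the $\hat\eta$-equation reduces to the polynomial identity whose $XY$-coefficient is $3\bA^4\bB^4-2\bA^4-2\bB^4+2$ on both sides, which is precisely where the constant $9\bA^4\bB^4-6\bA^4-6\bB^4+2$ gets used. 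However, your route is genuinely different from the paper's: the paper's proof of Theorem~\ref{T15} is a one-line ``straightforward verification,'' i.e.\ direct substitution of \eqref{T} into \eqref{intA} (the ansatz being motivated by the preceding remark that \eqref{darboux}--\eqref{X3} serve both the canonical and non-canonical cases), whereas you \emph{derive} \eqref{T} by an explicit change of dependent variables that maps the non-canonical system onto the canonical one \eqref{var} restricted to the Jacobi locus \eqref{324}, and then transport the classical solutions \eqref{JacChaz} back, identifying $d$ with ${\boldsymbol{\mathfrak{A}}}^{-1}$. What your approach buys is an explanation of the structure of \eqref{T} (why $\bA^{-1}$, $\bB^{-1}$, and the $\eta$-shift appear) and a cleaner handle on generality via the bijective rescaling plus the four-constant count; what it costs is reliance on the assertion that \eqref{JacChaz} exhausts the solutions of \eqref{var} on the locus \eqref{324} (the paper states this as ``of course known''; your fallback via local existence/uniqueness and attainability of generic initial data is the right patch), and it tacitly assumes the generic situation ${\boldsymbol{\mathfrak{A}}}\ne0$, consistent with the paper's stated exclusion of degenerations.
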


\begin{proof} The straightforward verification. \end{proof}

As a corollary we found that the principal parameter of the theory
acquires the form
$$
\Lambda=\frac{4}{(c\,\tau+\delta)^2}\bigg\{ \eta({\T})+
\frac{\pi^2}{12} \mbig[4][\vartheta_3^4(\T)+
\vartheta_4^4(\T)\mbig[4]]\bigg\}+
\frac{2\,\pi\,\ri\,c}{c\,\tau+\delta}\,;
$$
it contains integration constants but does not contain the integrals
$\bA$, $\bB$. Therefore it does not depend on whether canonical or
non-canonical case is taken. It also satisfies the 3rd order equation
\eqref{halphen} wherein we should put $X=\frac{\ri}{4\pi}\Lambda$.

In addition to Remark~\ref{R7} we note that in spite of seeming
simplicity, the symmetrical system \eqref{var} is not amenable to
integration. Among other things, it is not a compatibility condition
for equations \eqref{X} and \eqref{Dtau} and thus may not be used as an
alternative to the correct and integrable system \eqref{intA}. The
algebraic integral for the system \eqref{var}, \ie, \eqref{A3}, is
\emph{nonlinear} in variables $\vartheta^4$.

In regard to Remark~\ref{R4}, we note that Jacobi's system \eqref{ABab}
is integrated in its full generality along with the system
\eqref{intA}. Computations show that Jacobi's $a(h)$ is
$$
a=2\,I\,\frac{\vartheta_2^4}{\vartheta_3^4}
\mbig[7](\Mfrac{\alpha\,h+\beta}{\gamma\,h+\delta}\mbig[7])+I
\qquad(\alpha\,\delta-\beta\,\gamma=1)
$$
and remaining functions $b(h)$, $A(h)$, and $B(h)$ are easily computed
from equations \eqref{ABab} by differentiating followed by trivial
simplification. We thus obtain the complete set of integration
constants for  Eqs.~\eqref{ABab}. All this material is discussed at
greater length in \cite{br3}.

Let us assume now that quantities $\eta$, $\vartheta$ in \eqref{X} are
functions of $\tau$ according to Eqs.~\eqref{intA} and integration
constants $\{A,B,C,\varkappa,\btau\}$ are unknown functions of $\tau$.
Substituting formulae \eqref{Xgeneral} into \eqref{Dtau}, we get a
system of \odes\  for these functions. The calculations can be reduced
in advance since we have already had two integrals \eqref{A12}
($\hhence$ \eqref{omega}):
\begin{equation}\label{intO}
\varkappa\,\frac{\vartheta_3^2(\btau)}{\vartheta_3^2}=\bA^2\,,
\qquad \varkappa\,\frac{\vartheta_4^2(\btau)}
{\vartheta_4^2}=\bB^2\,.
\end{equation}
These relations define a point transformation between algebraic form of
integrals  $(\bA,\bB)$ and their transcendental counterpart, \ie, the
pair $(\varkappa,\btau)$; the quantities $\vartheta_3$ and
$\vartheta_4$ are parameters here. Hence we can obtain the sought-for
equations/solutions  by a simpler way. Let the dot above a symbol
denote the derivative with respect to $\tau$. Then we derive:
$$
\dot\btau=\varkappa^2\,, \qquad
\pi\,\ri\,\frac{\dot{\!\varkappa\,}}{\varkappa}=2\,M\,,\qquad \dot
A=0\,,\qquad \dot B=A\,\dot{\!\varkappa\,}\,,\qquad \frac{\dot
C}{C}=-\frac{\dot{\!\varkappa\,}}{\varkappa}\,.
$$
The first two equations have the solution (implicit in $\btau$)
\begin{equation}\label{four}
\frac{\vartheta_3(\btau)}{\vartheta_4(\btau)}=
\boldsymbol{p}\,\bigg\{ \frac{\vartheta_3(\T)}
{\vartheta_4(\T)}\bigg\}^{\!\boldsymbol{q}},\qquad
\varkappa=\frac{\sqrt{\boldsymbol{q}}}{c\,\tau+\delta}\,
\frac{\vartheta_2^2(\T)}{\vartheta_2^2(\btau)}\,,
\end{equation}
where $\boldsymbol{p}$, $\boldsymbol{q}$ are new integration constants.
The remaining equations are easily integrated to give
$$
A=\bE\,,\qquad
B=\frac{\bE\,\sqrt{\boldsymbol{q}}}{c\,\tau+\delta}\,
\frac{\vartheta_2^2(\T)}{\vartheta_2^2(\btau)}
+\bD\,,\qquad
C=\frac{\bC}{\sqrt{\boldsymbol{q}}}\,
\frac{\vartheta_2^2(\btau)}{\vartheta_2^2(\T)}\,(c\,\tau+\delta)\,,
$$
where  $\bC$, $\bD$, $\bE$ are further integration constants. By virtue
of Eqs.~\eqref{intO} and \eqref{T} we must put
$\boldsymbol{p}=\boldsymbol{q}=1$. Such a reduction of the number of
integration constants is dictated by the fact that integrals
\eqref{intO} are integrals of both $z$- and $\tau$-equations and the
equations themselves are nonlinear. The first equation in \eqref{four}
turns into a relation between $\btau$ and $\T$. One can show that this
relation is controlled by the standard modular group
$\boldsymbol{\Gamma}(4)$ \cite{mumford}:
$$
\frac{\vartheta_3(\btau)}{\vartheta_4(\btau)}=
\frac{\vartheta_3(\T)} {\vartheta_4(\T)}\quad\hence\quad
\btau=\widehat{\boldsymbol{\Gamma}(4)}(\T)\,.
$$
We choose the simplest case $\btau(\tau)=\T$ and hence the functions
$A(\tau)$, $B(\tau)$, $C(\tau)$, and $\varkappa(\tau)$ are immediately
determined. We thus arrive at the ultimate answer.

\begin{theorem}\label{T16}
Let integrability conditions of Eqs.~\textup{\eqref{X}--\eqref{Dtau}}
or, equivalently,  Eqs.\ \textup{\eqref{x}--\eqref{tau}}, be given by
Eqs.~\eqref{intA} and their solution \eqref{T}. Then the general and
simultaneous integral of equations \eqref{X} and \eqref{Dtau} reads as
follows\/$:$
\begin{alignat*}{2}
\pm\,\theta_1={}&&\frac{1}{\bA\bB}\,
\frac{d\,\bC}{\sqrt{c\,\tau+\delta\,}}\:
\theta_1\Big(\Mfrac{z+\bE}{c\,\tau+\delta}+ \bD
\Big|\Mfrac{a\,\tau+b}{c\,\tau+\delta}
\Big)\,\re_{\mathstrut}^{\frac{\sm\pi\,\ri\, c}
{c\tau+\delta}(z+\bE)^2}\,,\\
\pm\,\theta_2={}&& \frac{d\,\bC}{\sqrt{c\,\tau+\delta\,}}\:
\theta_2\Big(\Mfrac{z+\bE}{c\,\tau+\delta}+ \bD
\Big|\Mfrac{a\,\tau+b}{c\,\tau+\delta}
\Big)\,\re_{\mathstrut}^{\frac{\sm\pi\,\ri\, c}
{c\tau+\delta}(z+\bE)^2}\,,\\
\pm\,\theta_3={}&&\frac{1}{\bA}\,
\frac{\phantom{d}\,\bC}{\sqrt{c\,\tau+\delta\,}}\:
\theta_3\Big(\Mfrac{z+\bE}{c\,\tau+\delta}+ \bD
\Big|\Mfrac{a\,\tau+b}{c\,\tau+\delta}
\Big)\,\re_{\mathstrut}^{\frac{\sm\pi\,\ri\, c}
{c\tau+\delta}(z+\bE)^2}\,,\\
\pm\,\theta_4={}&& \frac{1}{\bB}\,
\frac{\phantom{d\,}\bC}{\sqrt{c\,\tau+\delta\,}}\:
\theta_4\Big(\Mfrac{z+\bE}{c\,\tau+\delta}+ \bD
\Big|\Mfrac{a\,\tau+b}{c\,\tau+\delta}
\Big)\,\re_{\mathstrut}^{\frac{\sm\pi\,\ri\, c}
{c\tau+\delta}(z+\bE)^2}\,.
\end{alignat*}
Here, $a\,\delta-b\,c=1$ and formula for $\Dtheta$ is a $z$-derivative
of first of these formulae.
\end{theorem}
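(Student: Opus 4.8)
The plan is to start from the general $z$-integral \eqref{Xgeneral} furnished by Theorem~\ref{T12}, which already solves Eqs.~\eqref{X} (equivalently \eqref{x}) for \emph{arbitrary} parameters $\vartheta,\eta$, and to promote its five integration constants $\{A,B,C,\varkappa,\btau\}$ to unknown functions of $\tau$. First I would substitute \eqref{Xgeneral} into the $\tau$-system \eqref{Dtau}, using the hypothesis that $\vartheta,\eta$ evolve by \eqref{intA} (so that the coefficients of \eqref{Dtau} are themselves known functions of $\tau$ via \eqref{T}). Collecting the explicitly $z$-dependent terms against the $z$-independent ones, the quadratic theta-identities already built into \eqref{Xgeneral} together with the heat equation \eqref{heat} should force all $z$-dependence to cancel, leaving a small closed system for the constants. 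As indicated in the discussion preceding the statement, this reduced system is $\dot\btau=\varkappa^2$, $\pi\,\ri\,\dot\varkappa/\varkappa=2M$, $\dot A=0$, $\dot B=A\,\dot\varkappa$, $\dot C/C=-\dot\varkappa/\varkappa$, with $M$ the quantity defined just before Theorem~\ref{T12}.

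Next I would integrate this reduced system. The pair $(\dot\btau,\dot\varkappa)$ is the only genuinely coupled part; inserting the explicit form of $M$ and of $\eta(\btau),\vartheta(\btau)$ through \eqref{T}, I expect $\varkappa$ to come out as in the second formula of \eqref{four}, while $\vartheta_3(\btau)/\vartheta_4(\btau)$ gets tied to $\vartheta_3(\T)/\vartheta_4(\T)$ by the first formula of \eqref{four}, introducing two new constants $\boldsymbol p,\boldsymbol q$. The remaining three equations are elementary quadratures, yielding $A=\bE$, and closed expressions for $B$ and $C$. The crucial reduction step is to recall that the algebraic integrals \eqref{A12}, equivalently \eqref{omega} and hence \eqref{intO}, are integrals of \emph{both} the $z$- and the $\tau$-equations, and that these equations are nonlinear: consistency of \eqref{intO} with \eqref{four} forces $\boldsymbol p=\boldsymbol q=1$, turning the first relation in \eqref{four} into the pure modular identity $\btau=\widehat{\boldsymbol{\Gamma}(4)}(\T)$.

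Finally I would pick the simplest representative $\btau(\tau)=\T=\frac{a\,\tau+b}{c\,\tau+\delta}$ of that $\boldsymbol{\Gamma}(4)$-orbit (any other choice differs by a modular change absorbed into $\vartheta_k(\btau)$), substitute the now-explicit $A(\tau),B(\tau),C(\tau),\varkappa(\tau)$ back into \eqref{Xgeneral}, and simplify the prefactors using the ratios $\vartheta_k/\vartheta_k(\T)$ supplied by \eqref{T} and the identity $2\,\ded^3=\vartheta_2\vartheta_3\vartheta_4$ from \eqref{ded}. The quadratic exponent $2M(z+A)^2$ should collapse to $\frac{-\pi\,\ri\,c}{c\tau+\delta}(z+\bE)^2$ once one uses $\pi\,\ri\,\dot\varkappa/\varkappa=2M$ and $\dot\btau=\varkappa^2$, producing exactly the four displayed formulae; the $\Dtheta$-formula then follows by differentiating the $\theta_1$-line in $z$, as stated. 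The main obstacle I anticipate is not any single identity but the bookkeeping of how many of the constants $\{A,B,C,\varkappa,\btau,\boldsymbol p,\boldsymbol q,\bC,\bD,\bE,a,b,c,\delta\}$ are truly independent once the $z$-equations, the $\tau$-equations, and their common integrals \eqref{A12} are imposed \emph{simultaneously} — getting this count right (the four modular constants $a,b,c,\delta$ with $a\delta-bc=1$, together with $\bC,\bD,\bE$) is precisely what upgrades the result from "a solution" to the \emph{general and simultaneous} integral.
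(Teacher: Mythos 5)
Your proposal is correct and takes essentially the same route as the paper: the derivation you outline (promoting $\{A,B,C,\varkappa,\btau\}$ to functions of $\tau$, reducing the resulting ODEs via the integrals \eqref{intO}, integrating to \eqref{four}, forcing $\boldsymbol p=\boldsymbol q=1$ by consistency with \eqref{T}, and choosing the simplest representative $\btau=\T$ of the $\boldsymbol{\Gamma}(4)$-relation) is precisely the computation the paper carries out in Sect.~\ref{S9.4} immediately before the theorem. The paper's formal proof then only records that the resulting expressions satisfy \eqref{X} and \eqref{Dtau} under arbitrary constants $\{\bA,\bB,\bC,\bD,\bE\}$ by straightforward verification, a check your plan effectively includes.
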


\begin{proof}
Straightforward calculation shows that these expressions do indeed
solve the systems \eqref{X} and \eqref{Dtau} under arbitrary constants
$\{\bA$, $\bB$, $\bC$, $\bD$, $\bE\}$. Coefficients $\eta$ and
$\vartheta$'s contain parameters $\{a,b,c,d\}$ which are free.
\end{proof}

Reduction to the canonical case \eqref{sol} is brought about by putting
$\bA=\bB=d=1$ and by choosing the transformation
$\left(\begin{smallmatrix}a&b\\c&\delta
\end{smallmatrix} \right)=\left(\begin{smallmatrix}1&0\\2&1
\end{smallmatrix} \right)\in  \boldsymbol{\Gamma}(2)$ since,
the group ${\boldsymbol{\Gamma}(2)}$ does not permute  functions
$\theta_k$ or $\vartheta_k$. Indeed, functions
$$
\frac{1}{\sqrt{c\,\tau+\delta\,}}\:
\theta_{k}\Big(\Mfrac{z+\bE}{c\,\tau+\delta}+ \bD
\Big|\Mfrac{a\,\tau+b}{c\,\tau+\delta}
\Big)\,\re_{\mathstrut}^{\frac{-\pi\,\ri\, c}
{c\tau+\delta}(z+\bE)^2}\,,
$$
under the parameters above, become
$$
\theta_k\sim\frac{1}{\sqrt{2\,\tau+1\,}}\:
\theta_{k}\Big(\Mfrac{z+\bE}{2\,\tau+1}+ \bD
\Big|\Mfrac{1\,\tau+0}{2\,\tau+1}
\Big)\,\re_{\mathstrut}^{\frac{-\pi\,\ri\, 2}
{2\tau+1}(z+\bE)^2}=\cdots\,,
$$
and, according to Corollary~\ref{C4},
$$
\begin{aligned}
\cdots&={}
\frac{\const}{\sqrt{2\,\tau{+}1\,}}\cdot\sqrt{2\,\tau{+}1}
\re_{\mathstrut}^{\frac{2\,\pi\,\ri}
{2\tau+1}\mbig[0]\{z+\bE+\bD(2\tau+1)\mbig[0]\}^{\!2}}
\theta_{k}\big(z+\bE+ \bD\,(2\tau{+}1)|\tau\big)\,
\re_{\mathstrut}^{\frac{-2\pi\,\ri}
{2\tau+1}(z+\bE)^2}\\
&=\const\cdot\theta_{k}\big(z+ 2\,\bD\,\tau+\bE+\bD|\tau\big)\,
\re_{\mathstrut}^{2\pi\ri\bD\,(2z+2\bD\,\tau)}\,,
\end{aligned}
$$
that is  the `linearly exponential' form \eqref{sol} under
$(A,B)=(2\,\bD, \bE+\bD)$.

\section{An application. The sixth Painlev\'e transcendent\label{S10}}

\noindent In addition to applications of the previous machinery
mentioned in \cite{br1,br3,br4}, in this section, we present briefly
one more and very nontrivial application  to the famous 6th Painlev\'e
equation \cite{conte}
\begin{equation}\label{P6}
\begin{aligned}
y_{\mathit{xx}}^{}&=\frac12\! \left(\frac1y+\frac{1}{y-1}+
\frac{1}{y-x}\right) y_x^2- \left(\frac1x+\frac{1}{x-1}+
\frac{1}{y-x}\right)y_x^{}\\
&\== +\frac{y(y-1)(y-x)}{x^2(x-1)^2}
\left\{\alpha-\beta\,\frac{x}{y^2}+\gamma\,\frac{x-1}{(y-1)^2}-
\Big(\delta-\Mfrac12\Big)\,\frac{x(x-1)}{(y-x)^2} \right\}.
\end{aligned}
\end{equation}
A deep connection of this equation with elliptic functions was
established by Painlev\'e himself in work \cite{painleve} wherein he
gave a remarkable form to Eq.~\eqref{P6}:
\begin{equation}\label{P6wp}
-\frac{\pi^2}{4}\,\frac{d^2
\boldsymbol{z}}{d\tau^2}=\alpha\,\wp'(\boldsymbol{z}|\tau)+
\beta\,\wp'(\boldsymbol{z}-1|\tau)+
\gamma\,\wp'(\boldsymbol{z}-\tau|\tau)+
\delta\,\wp'(\boldsymbol{z}-1-\tau|\tau)
\end{equation}
by performing the transcendental change of variables
$(y,x)\rightleftarrows(\boldsymbol{z},\tau)$:
\begin{equation}\label{subs}
x=\frac{\vartheta_4^4(\tau)}{\vartheta_3^4(\tau)}\,,\qquad
y=\frac13+\frac13\frac{\vartheta_4^4(\tau)}{\vartheta_3^4(\tau)}
-\frac{4}{\pi^2}
\frac{\wp(\boldsymbol{z}|\tau)}{\vartheta_3^4(\tau)}\,.
\end{equation}

In 1995 Hitchin \cite{hitchin} found a beautiful solution to
Eq.~\eqref{P6} which hitherto remains the most nontrivial of all those
currently known. It corresponds to parameters
$\alpha=\beta=\gamma=\delta=\frac18$ and reads parametrically as
follows \cite[pp.~74, 78]{hitchin}:
\begin{equation}\label{wpz}
\wp(\boldsymbol{z}|\tau)=\wp(A\tau+B|\tau)+\frac12\,
\frac{\wp'(A\tau+B|\tau)}
{\zeta(A\tau+B|\tau)-(A\tau+B)\,\eta(\tau)+\ri\frac\pi2 A}\,,
\end{equation}
where $A$, $B$ are free constants. It is not difficult to show that in
the case of Hitchin's parameters $\{\alpha$, $\beta$, $\gamma$,
$\delta\}$ equation \eqref{P6wp}  can be written in the
$\theta$-function form
$$
-\pi^2\,\frac{d^2
\boldsymbol{z}}{d\tau^2}=4\,\wp'(2\boldsymbol{z}|\tau) \qquad
\hhence\qquad\frac{d^2
\boldsymbol{z}}{d\tau^2}=4\pi\,\ded^9(\tau)\,
\frac{\theta_1^{}(2\boldsymbol{z}|\tau)}
{\theta_1^4(\boldsymbol{z}|\tau)}\,.
$$

With complete rules of differential $\theta$-computations in hand, we
can obtain their analog for Weierstrassian functions and thereby
automate and simplify manipulation with all solutions to Eq.~\eqref{P6}
expressible in terms of elliptic or $\theta$-functions.

\subsection{Weierstrass' functions and Hitchin's solution}

In order to derive rules for derivatives of Weierstrassian functions
with respect to half-periods $(\om,\om')$ one can use the preceding
$\theta$-apparatus supplemented with rules \eqref{st},
\eqref{wp1}--\eqref{wp2}, or, alternatively, transformations between
derivatives in $(\g2,\g3)$ and $(\om,\om')$
\cite[pp.~263--265]{forsyth}. The $(\g2,\g3)$-derivatives were
considered by Weierstrass \cite[{\bf V}]{we}, \cite[{\bf I}]{halphen},
\cite[\bf IV]{tannery} and, at about the same time, by Frobenius and
Stickelberger \cite{frobenius}. Let us denote a sign of the period
ratio as $\boldsymbol{\mathfrak{s}}\DEF
\mathfrak{sign}\mbig\{\boldsymbol\Im
\big(\frac{\omega\s'}{\om}\big)\mbig\}$. Applying now any of the
techniques above and carrying out  some simplification, we obtain  that
the sought-for formulae acquire very compact and symmetrical form (not
presented in the literature).

\begin{theorem}\label{T17}
Rules for differentiating Weierstrass'
$(\sigma,\zeta,\wp,\wp')$-functions are
\begin{align*}
& \left\{
\begin{aligned}
\boldsymbol{\mathfrak{s}} \,\frac{\partial\sigma}{\partial\om}&=
-\frac{\ri}{\pi}\, \Big\{
\om'\mbig(\wp-\zeta^2-\mfrac{1}{12}\,\g2\,z^2 \mbig)
+2\,\etap\,(z\,\zeta-1)\Big\}\,\sigma\\
\boldsymbol{\mathfrak{s}}\, \frac{\partial\sigma}{\partial\om'}&=
\phantom{-}\frac{\ri}{\pi}\, \Big\{
\om\mbig(\wp-\zeta^2-\mfrac{1}{12}\,\g2\,z^2 \mbig)
+2\,\eta\,(z\,\zeta-1)\Big\}\,\sigma\,,
\end{aligned}
\right.\\
& \left\{
\begin{aligned}
\boldsymbol{\mathfrak{s}}\,\frac{\partial\zeta}{\partial\om}&=
-\frac{\ri}{\pi}\, \Big\{
2\,(\om'\zeta-z\,\etap)\,\wp+\om'\,\mbig(\wp'-\mfrac
16\,\g2\,z\mbig) +2\,\etap\zeta\Big\} \\
\boldsymbol{\mathfrak{s}}\,
\frac{\partial\zeta}{\partial\om'}&=\phantom{-} \frac{\ri}{\pi}\,
\Big\{ 2\,(\om\,\zeta-z\,\eta)\,\wp+\om\,\mbig(\wp'-\mfrac
16\,\g2\,z\mbig) +2\,\eta\,\zeta\Big\}\,,
\end{aligned}
\right.\\
& \left\{
\begin{aligned}
\boldsymbol{\mathfrak{s}}\,
\frac{\partial\wp}{\partial\om}&=\phantom{-} \frac{\ri}{\pi}\,
\Big\{ 2\,(\om'\zeta-z\,\etap)\,\wp'
+4\,(\om'\wp-\etap)\,\wp-\Mfrac23\,\om' \g2\Big\} \\
\boldsymbol{\mathfrak{s}}\, \frac{\partial\wp}{\partial\om'}&=
-\frac{\ri}{\pi}\, \Big\{ 2\,(\om\,\zeta-z\,\eta)\,\wp'
+4\,(\om\,\wp-\eta)\,\wp-\Mfrac23\,\om\, \g2\Big\}\,,
\end{aligned}
\right.\\
& \left\{
\begin{aligned}
\boldsymbol{\mathfrak{s}}\,
\frac{\partial\wp'}{\partial\om}&=\phantom{-} \frac{\ri}{\pi}\,
\mbig\{ 6\,(\om'\wp-\etap)\,\wp'
+(\om'\zeta-z\,\etap)(12\,\wp^2-\g2) \mbig\} \\
\boldsymbol{\mathfrak{s}}\,
\frac{\partial\wp'}{\partial\om'}&=-\frac{\ri}{\pi}\, \mbig\{
6\,(\om\,\wp-\eta)\,\wp' +(\om\,\zeta-z\,\eta)(12\,\wp^2-\g2)
\mbig\}\,.
\end{aligned}
\right.
\end{align*}
\end{theorem}

Setting in these equations $\om=1$, $\om'=\tau$, and
$\boldsymbol{\mathfrak{s}}=1$, we arrive at a dynamical system
containing  parameter $z$:
\begin{equation}\label{szwp}
\left\{\mspace{-15mu}
\begin{array}{l}
\ds\mspace{8.5mu}\frac{\partial\sigma}{\partial\tau}=
\phantom{-}\frac{\ri}{\pi} \Big\{ \wp-\zeta^2
+2\,\eta\,(z\,\zeta-1)-\Mfrac{1}{12}\,\g2\,z^2\Big\}\,\sigma
\\[2ex]
\left.
\begin{aligned}
\frac{\partial\zeta}{\partial\tau}&= \phantom{-}
\frac{\ri}{\pi} \Big\{ \wp'+2\,(\zeta-z\,\eta)\,\wp+ 2\,\eta\,\zeta
-\Mfrac16\,\g2\,z\Big\}\\
\frac{\partial\wp}{\partial\tau}&=-\frac{\ri}{\pi}
\Big\{ 2\,(\zeta-z\,\eta)\,\wp'+4\,(\wp-\eta)\,\wp
-\Mfrac23\,\g2\Big\}\\
\frac{\partial\wp'}{\partial\tau}&= -\frac{\ri}{\pi}\, \mbig\{
6\,(\wp-\eta)\,\wp'+(\zeta-z\,\eta)(12\,\wp^2-\g2) \mbig\}
\hspace{1em}\,,\hspace{-1.5em}
\end{aligned}\right\}
\end{array}
\right.
\end{equation}
where we use the right brace additionally to denote  the differential
closedness of the functions $(\zeta,\wp,\wp')$. It follows that the
triple of functions $\zeta(z|\tau)$, $\wp(z|\tau)$, and $\wp'(z|\tau)$
is differentially closed with respect to both variables $z$ and $\tau$.
We have also to close the derivatives of coefficients $\g2$ and $\eta$.
This is realized by the Halphen system \eqref{g2g3} but third variable
$\g3$ is not present in system \eqref{szwp} or in the system of
$z$-equations
\begin{equation}\label{weier}
\frac{d\zeta}{dz}=-\wp\,,\qquad \frac{d\wp}{dz}=\wp'\,,\qquad
\frac{d\wp'}{dz}=6\,\wp^2-\frac12\,\g2\,.
\end{equation}
Therefore we may treat the classical Weierstrassian relation between
$\wp$ and $\wp'$
$$
\g3(\wp,\wp')=4\,\wp^3-\g2\,\wp-(\wp')\ts{}^2
$$
as the \emph{algebraic $($polynomial\/$)$ integral} of equations
\eqref{weier} or as the \emph{surface of a constant level} for the
dynamical system with variable coefficients, \ie,  system \eqref{szwp}.

Another corollary of this system  is the fact that each of functions
$\zeta$, $\wp$, $\wp'$ satisfies the ordinary $\tau$-differential
equation of 2nd order with variable coefficients $\g2$, $\g3$, $\eta$,
and function $\sigma$ does an equation of 3rd order. These equations
are too large to display here.  The function
$\boldsymbol{Z}=\zeta(z|\tau)-z\,\eta(\tau)$, as an example, solves an
equation obtainable by elimination  of variable $\wp$  from the two
polynomials that follow from \eqref{szwp}:
$$
\big\{\pi\,\ri\, \boldsymbol{Z}_\tau+2\,(\wp+\eta)\,\boldsymbol{Z}
\big\}^2-4\,\wp^3+\g2\,\wp+\g3\,,
$$
$$
\frac{\pi^2}{8}\,\frac{\boldsymbol{Z}_{\tau\tau}}{\boldsymbol{Z}}
+\ri\,\frac{\pi}{2}\,\big(\boldsymbol{Z}^2+\wp-2\,\eta\big)\,
\frac{\boldsymbol{Z}_\tau}{\boldsymbol{Z}}+
(\wp+\eta)\,\boldsymbol{Z}^2-\wp^2+\eta\,\wp-\eta^2+\frac14\,\g2\,;
$$
these polynomials are understood to be equal to zero. They do not
explicitly contain the variable $z$. We do not consider here the
$\theta,\theta'$-analogs of these equations.

We observe that Hitchin's solution is a function of the quantities
$\zeta(A\tau+B|\tau)$, $\wp(A\tau+B|\tau)$, and $\wp'(A\tau+B|\tau)$.
Thus, these three functions define a dynamical system in its own right
that follows from equations \eqref{szwp}--\eqref{weier}.

\newcommand{\Z}{\Larger{\boldsymbol{\zeta}}}
\newcommand{\WP}{\Larger{\boldsymbol{\wp}}}
\newcommand{\WPP}{\Larger{\boldsymbol{\wp'}}}

\begin{proposition}\label{P3}
The Hitchin case of Painlev\'e equation \eqref{P6} is equivalent to the
dynamical system
\begin{equation}\label{canon}
\frac{d\Z}{d\tau}=\frac{\ri}{\pi} \big\{ \WPP+
2\,(\WP+\eta)\,\Z\big\}\,,\qquad \frac{d\WP}{d\tau}=-\frac{\ri}{\pi}
\Big\{ 2\,\Z\,\WPP+4\,(\WP-\eta)\,\WP -\Mfrac23\,\g2\Big\}
\end{equation}
with variable coefficients $\eta=\eta(\tau)$, $\g2=\g2(\tau)$,
$\g3=\g3(\tau)$. Here,
$$
\WPP\DEF\sqrt{4\WP^3-\g2\WP-\g3}
$$
and equations \eqref{canon} may be supplemented by their corollary
$$
\frac{d\WPP}{d\tau}\equiv -\frac{\ri}{\pi}\, \big\{
6\,(\WP-\eta)\,\WPP+(12\,\WP^2-\g2)\,\Z \big\}\,.
$$
The general integral of the equations reads
\begin{equation}\label{ZP}
\Z=\zeta(A\tau+B|\tau)-A\etap(\tau)-B\eta(\tau)\,,\qquad
\WP=\wp(A\tau+B|\tau)\,.
\end{equation}

\end{proposition}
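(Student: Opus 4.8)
The plan is to obtain the system~\eqref{canon} and the general integral~\eqref{ZP} by a single direct computation, and then to deduce the equivalence with the Hitchin case of~\eqref{P6} from Hitchin's own result. First I would take $\Z$ and $\WP$ to be \emph{defined} by formula~\eqref{ZP} in terms of the canonical Weierstrass functions $\zeta(A\tau+B|\tau)$, $\wp(A\tau+B|\tau)$ and the constants $A,B$, and observe that the branch $\WPP\DEF\sqrt{4\WP^3-g_2^{}\WP-g_3^{}}$ is nothing but $\wpp(A\tau+B|\tau)$, since $\big(\wp,\wpp\big)(A\tau+B|\tau)$ sits on the Weierstrass cubic. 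Differentiating by the chain rule, $\frac{d}{d\tau}f(A\tau+B|\tau)=A\,f_z+f_\tau$, I would then substitute the $z$-equations~\eqref{weier} for $f_z$, the $\zeta$- and $\wp$-equations of~\eqref{szwp} (equivalently Theorem~\ref{T17} with $\om=1$, $\omp=\tau$, $\boldsymbol{\mathfrak{s}}=1$) for $\zeta_\tau$, $\wp_\tau$, Halphen's system~\eqref{g2g3} for $\eta_\tau$ and $g_{2,\tau}^{}$, and Legendre's period relation for $\etap$ and $\etap_\tau$.

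The whole point is that the explicit occurrences of $z=A\tau+B$, and the $\eta$- and $g_2^{}$-dependent terms produced by the chain rule, must cancel and leave an autonomous right-hand side in $(\Z,\WP,\WPP)$ with coefficients $\eta,g_2^{},g_3^{}$ only. This is exactly why $\Z$ is defined with the correction $-A\etap-B\eta$: Legendre's relation gives $\etap-\tau\eta=-\frac{\pi}{2}\,\ri$, hence $\zeta(A\tau+B|\tau)-(A\tau+B)\,\eta=\Z-\frac{\pi}{2}\,\ri\,A$ (so that, incidentally, $\Z$ coincides with the denominator $\zeta(A\tau+B|\tau)-(A\tau+B)\eta+\frac12\pi\,\ri\,A$ of Hitchin's solution~\eqref{wpz}), and the stray contributions $\frac{\ri}{\pi}(\pi\,\ri\,A\,\WP)=-A\,\WP$ and $-A\,\eta$ produced by this shift kill the $A\,\wpp$ and $A\,\wp$ terms of the chain rule; a parallel cancellation using $g_{2,\tau}^{}$ from~\eqref{g2g3} removes the $g_2^{}z$-terms. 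What survives is precisely~\eqref{canon}. The stated corollary for $\frac{d\WPP}{d\tau}$ then follows either by the same computation applied to the $\wpp_\tau$-equation of~\eqref{szwp}, or by differentiating $\WPP^2=4\WP^3-g_2^{}\WP-g_3^{}$ and eliminating $g_{2,\tau}^{},g_{3,\tau}^{}$ through~\eqref{g2g3}.

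For the equivalence with the Hitchin case of~\eqref{P6} I would argue as follows. System~\eqref{canon} is first order in the two unknowns $\Z,\WP$ with $\WPP$ algebraically determined, so its general solution depends on exactly two constants; the two-parameter family~\eqref{ZP}, which we have just verified to be a solution, therefore exhausts all of them, proving the ``general integral'' claim. On the other hand, in the Hitchin case $\alpha=\beta=\gamma=\delta=\frac18$ equation~\eqref{P6wp} takes the $\theta$-form $-\pi^2\boldsymbol z_{\tau\tau}=4\,\wpp(2\boldsymbol z|\tau)$ recorded just above, and Hitchin's solution~\eqref{wpz} expresses $\wp(\boldsymbol z|\tau)$ as the rational combination $\wp(\boldsymbol z|\tau)=\WP+\frac12\,\WPP/\Z$ of precisely the three quantities $\WP,\WPP,\Z$. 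Hence the substitution $(\Z,\WP)\mapsto\boldsymbol z$ carries solutions of~\eqref{canon} to solutions of the Hitchin P6, and — by Hitchin's theorem that~\eqref{wpz} is the \emph{general} solution for those parameter values — every solution of the Hitchin P6 arises in this way; the two problems are thus interconvertible, which is the asserted equivalence.

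I expect the only real obstacle to be computational rather than conceptual: carrying through the chain-rule differentiation while keeping the sign of $\boldsymbol{\mathfrak{s}}$ and of Legendre's relation consistent, and tracking that the $A\tau+B$-, $\eta$- and $g_2^{}$-terms cancel \emph{exactly} into the compact shape~\eqref{canon}. Once the derivative rules of Theorem~\ref{T17} together with~\eqref{weier} and~\eqref{g2g3} are in hand, the verification is routine.
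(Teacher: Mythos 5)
Your proposal is correct and follows essentially the same route the paper intends: differentiate the ansatz \eqref{ZP} by the chain rule, close the derivatives with the $z$-equations \eqref{weier}, the $\tau$-equations \eqref{szwp} (Theorem~\ref{T17} with $\om=1$, $\omp=\tau$), the Halphen system \eqref{g2g3} and Legendre's relation (which identifies $\Z$ with the denominator of Hitchin's formula \eqref{wpz}), observe the cancellations yielding \eqref{canon}, and then read the equivalence with the Hitchin case off the representation $\wp(\boldsymbol z|\tau)=\WP+\tfrac12\,\WPP/\Z$ of the general solution \eqref{wpz}. The only cosmetic slip is in describing which chain-rule terms cancel in which equation, but the mechanism and the verification are right.
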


In other words, we may view the sixth Painlev\'e transcendent under
Hitchin's parameters as a pair of `$\tau$-equations' \eqref{canon},
\ie, as an integrable case of the $\tau$-representation  \eqref{P6wp}.

One further consequence of the results above is that we can construct
the Painlev\'e--Hitchin integral $\tau$-calculus. Indeed, the first of
equations \eqref{szwp} suggests that the quantity
$\wp-(\zeta-z\,\eta)^2$ is  integrable  with respect to $\tau$ and
expressible  through a logarithm of the $\sigma$-function. Namely,
\begin{equation}\label{tmp}
-\pi\,\ri\,\frac{d}{d\tau}\!\ln\sigma=\wp-(\zeta-z\,\eta)^2+
\Big(\eta^2-\Mfrac{1}{12}\,\g2\Big)\, z^2-2\,\eta\,.
\end{equation}
Owing to equations \eqref{canon} and the fact that Hitchin's solution
have a $z$-argument in form $z=A\,\tau+B$, we can extend \eqref{tmp}
and consider the objects \eqref{ZP}, \ie, $\WP-\Z^2$. Therefore
$$
-\pi\,\ri\,\frac{d}{d\tau}\!\ln\sigma(A\,\tau+B|\tau)=
\WP-\Z^2+f(\tau)\,,
$$
where unknown function $f(\tau)$ is independent of $\Z$ and $\WP$. It
is readily determined by a differentiation  followed by use of
differential connection \eqref{ded} between $\ded$ and $\eta$. Applying
an antiderivative to the last equation, we obtain the nice indefinite
integral
$$
\frac{\ri}{\pi}\int\limits^{\;\,\tau}\!\!\big(\WP-\Z^2
\big)d\tau=\ln \theta_1\!\big({\tfrac12}A\tau+\tfrac
12B\big|\tau\big)- \ln\ded(\tau)+\frac\ri4\,\pi A^2\tau\,;
$$
it can be checked by a straightforward differentiation and conversion
everything to $\vartheta,\theta$-functions. This integral is nothing
but the Painlev\'e $\tau$-analog of Weierstrassian relation
$\tsint\zeta\,dz=\ln\sigma$ between meromorphic $\zeta$-function and
entire function $\sigma$.

\subsection{$\theta$-function forms of Hitchin's solution}

In addition to solution \eqref{wpz}, Hitchin suggested also  its
$\theta$-function form. We reproduce it here in original notation
\cite[p.~33]{hitchin}:
\begin{equation*}\label{morazm}
\begin{aligned}
y(x)&=\frac{\vartheta_1'''(0)}
{3\,\pi^2\vartheta_4^4(0)\,\vartheta_1'(0)}
+\frac13\,\bigg(1+\frac{\vartheta_3^4(0)}{\vartheta_4^4(0)}
\bigg)\\
&\== +\frac{\vartheta_1'''(\nu)\,\vartheta_{1\!}^{}(\nu)-
2\,\vartheta_1''(\nu)\,\vartheta_1'(\nu)
+4\,\pi\,\ri\,c_1^{}\,(\vartheta_1''(\nu)\,\vartheta(\nu)-
\vartheta_1'^2(\nu)
)}{2\,\pi^2\,\vartheta_4^4(0)\,\vartheta_{1\!}^{}(\nu)\,
(\vartheta_{1\!}'(\nu)+
2\,\pi\,\ri\,c_1^{}\,\vartheta_{1\!}^{}(\nu))}\,,
\end{aligned}
\end{equation*}
where $\nu=c_1^{}\tau+c_2^{}$. Differential properties of
$\theta$-functions or conversion formulae like \eqref{wp1}--\eqref{wp2}
suggest that the availability of the higher $\theta$-derivatives  is
excessive here and we can simplify this solution. Doing this, we obtain
the very simple formula
\begin{equation*}\label{simp}
y=\frac{\sqrt{x}}{\theta_1^2}
\bigg\{\frac{\pi\,\vartheta_2^2\cdot\theta_2\,\theta_3\,\theta_4}
{\Dtheta+2\,\pi A\,\theta_1}-\theta_2^2\bigg\},
\end{equation*}
where symbols $\vartheta$, $\Dtheta$, $\theta$ are understood to be
equal to
$$
\Dtheta=\Dtheta\Big(\!A\mfrac{\ellK(\sqrt{x})}{\ellK'(\sqrt{x})}+
B\Big|\mfrac{\ri\,
\ellK(\sqrt{x})}{\ellK'(\sqrt{x})}\Big),\qquad
\theta_k=\theta_k\Big(\!A\mfrac{\ellK(\sqrt{x})}{\ellK'(\sqrt{x})}+
B\Big|\mfrac{\ri\,
\ellK(\sqrt{x})}{\ellK'(\sqrt{x})}\Big)\,,\qquad
\vartheta_2=\vartheta_2\Big(\mfrac{\ri\,
\ellK(\sqrt{x})}{\ellK'(\sqrt{x})}\Big).
$$
Elliptic integrals $\ellK$ and $\ellK'$ were introduced above. They
give an inversion of the first formula in \eqref{subs}, that is an
equivalent of Jacobi's formula \eqref{KK'}:
$$
\tau=\ri\,\frac{\ellK(\sqrt{x})}{{\ds\ellK'}(\sqrt{x})}\,.
$$

Further reading of Hitchin' solution is related to the fact that all
solutions to the Painlev\'e equations are meromorphic functions with
fixed branch points. For  equation \eqref{P6} these are the three
points $x=\{0,1,\infty \}$ and the general theory of Painlev\'e
equations guaranties availability of what is called the
$\Larger{\btau}$-representation \cite[p.~165]{conte}
\begin{equation}\label{TAU}
y\sim x\,(1-x)\,\frac{d}{dx}\!
\ln\frac{\Larger{\btau}_{\!\!\!1}^{}}{\Larger{\btau}_{\!\!\!2}^{}}
\end{equation}
Here, the `bold tau' is a traditional tau-function notation having
nothing in common with modulus $\tau$ or modulus $\btau$ in
sects.~\ref{S7.2} and \ref{S9}. Equations \eqref{szwp} have the
consequence
$$
\frac{\pi}{2\,\ri}\,\frac{d}{d\tau}
\!\ln\big\{\zeta(z|\tau)-z\,\eta(\tau)\big\}= \wp(z|\tau)+\frac12\,
\frac{\wp'(z|\tau)}{\zeta(z|\tau)-z\,\eta(\tau)}+\eta(\tau)\,.
$$
Comparing this property with \eqref{wpz}, we observe the total
logarithmic derivative
\begin{equation*}\label{lnwp}
\wp(z|\tau)=\frac{\pi}{2\,\ri}\,\frac{d}{d\tau}\!\ln
\frac{\zeta(A\tau+B|\tau)-A\,\etap(\tau)-B\,\eta(\tau)}
{\ded^2(\tau)}\,.
\end{equation*}
Replacing $(A,B)$ with $(2A,2B)$ and transforming the right-hand side
of this equation into the $\theta$-functions, we can rewrite the
previous parametric form of the solution as follows:
$$
y=\frac{2\,\ri}{\pi}\frac{1}{\vartheta_3^4(\tau)}\, \frac{d}{d\tau}
\!\ln\frac{\Dtheta(A\tau+B|\tau)+2\,\pi\,\ri\,A\,
\theta_1(A\tau+B|\tau)}
{\vartheta_2^2(\tau)\,\theta_1(A\tau+B|\tau)}\,, \qquad
x=\frac{\vartheta_4^4(\tau)}{\vartheta_3^4(\tau)}\,.
$$
Conversion of this result into the original $x$-representation
\eqref{TAU} becomes now an exercise because all the differential
calculus has been described completely.

\begin{proposition}\label{P4}
The general solution to the Hitchin's case of Painlev\'e equation
\eqref{P6} in the tau-function form \eqref{TAU} is as follows\/$:$
$$
\begin{aligned}
y&=x\,(1-x)\,\frac{d}{dx}\!\ln \frac{\left\{\Dtheta\!\!\left(\!
A\frac{\ellK(\sqrt{x})}{\ellK'(\sqrt{x})}+B\Big|\frac{\ri\,
\ellK(\sqrt{x})}{\ellK'(\sqrt{x})}\right)+ 2\,\pi
A\cdot\theta_1\!\!\left(\!
A\frac{\ellK(\sqrt{x})}{\ellK'(\sqrt{x})}+B\Big|\frac{\ri\,
\ellK(\sqrt{x})}{\ellK'(\sqrt{x})}\right) \right\}^{\!2}} {(1-x)\,
\theta_1^2
\!\left(\! A\frac{\ellK(\sqrt{x})}{\ellK'(\sqrt{x})}+B\Big|\frac{\ri\,
\ellK(\sqrt{x})}{\ellK'(\sqrt{x})}\right) \ellK'^2(\sqrt{x})}
\\
&= \frac{\ellE'(\sqrt{x})}{\ellK'(\sqrt{x})}+2\,x\,(1-x)\,
\frac{d}{dx}\!\ln\!\bigg\{\frac{\Dtheta}{\theta_1}\!\!
\left(\! \textstyle
A\frac{\ellK(\sqrt{x})}{\ellK'(\sqrt{x})}+B\Big|\frac{\ri\,
\ellK(\sqrt{x})}{\ellK'(\sqrt{x})}\right)+2\,\pi A\bigg\},
\end{aligned}
$$
where Legendre's elliptic integrals $(\ellK,\ellK',\ellE,\ellE')$
\textup{\cite{bateman,WW}}, as functions of $\sqrt{x}$, are
differentially closed according to the rules
$$
\begin{aligned}
2\,\frac{d\ellK}{dx} &=\frac{\ellE}{x\,(1-x)}- \frac{\ellK}{x}\,,
&\qquad
2\,\frac{d\ds\ellK'}{dx}&=\frac{\ds\ellE'}{x\,(x-1)}-
\frac{\ds\ellK'}{x-1}\,,\\
2\,\frac{d\ellE}{dx} &=\frac{\ellE}{x}- \frac{\ellK}{x}\,, &
2\,\frac{\ds d\ellE'}{dx}&=\frac{\ds\ellE'}{x-1}-
\frac{\ds\ellK'}{x-1}\,.
\end{aligned}
$$
\end{proposition}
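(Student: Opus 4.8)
The plan is to descend from the parametric $\tau$-form of Hitchin's solution obtained just above from \eqref{wpz} and the differential calculus of Sect.~\ref{S10}, namely
\[
y=\frac{2\,\ri}{\pi}\,\frac{1}{\vartheta_3^4(\tau)}\,\frac{d}{d\tau}\ln\frac{\Dtheta(A\tau+B|\tau)+2\,\pi\,\ri\,A\,\theta_1(A\tau+B|\tau)}{\vartheta_2^2(\tau)\,\theta_1(A\tau+B|\tau)}\,,\qquad x=\frac{\vartheta_4^4(\tau)}{\vartheta_3^4(\tau)}\,,
\]
to the $x$-parametrization demanded by \eqref{TAU}, by converting the $\tau$-differentiation into an $x$-differentiation via the chain rule $\frac{d}{d\tau}=\frac{dx}{d\tau}\frac{d}{dx}$. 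The Jacobian $dx/d\tau$ is read off from the $\vartheta$-constant equations \eqref{var}: differentiating $\ln x=4\ln(\vartheta_4/\vartheta_3)$ and invoking $\frac{d}{d\tau}\ln(\vartheta_3/\vartheta_4)=\tfrac{\pi}{4}\ri\,\vartheta_2^4$ gives the compact relation $dx/d\tau=-\pi\,\ri\,\vartheta_2^4\,x$, while Jacobi's identity \eqref{324} gives $\vartheta_2^4/\vartheta_3^4=1-x$. Plugging these in, the prefactor $\frac{2\ri}{\pi\vartheta_3^4}\cdot\frac{dx}{d\tau}$ collapses neatly to $2\,x(1-x)$, so that $y=2\,x(1-x)\,\frac{d}{dx}\ln\frac{\Dtheta+2\pi A\theta_1}{\vartheta_2^2\theta_1}$ after the harmless rescaling $A\mapsto-\ri A$ (which turns $2\pi\ri A$ into $2\pi A$ and, once $\tau$ is eliminated below, turns the argument $A\tau+B$ into the one appearing in the statement).

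Next I would install the classical Jacobi--Legendre dictionary. The relation $x=\vartheta_4^4(\tau)/\vartheta_3^4(\tau)$ is inverted by Jacobi's formula \eqref{KK'}, which in this normalization reads $\tau=\ri\,K(\sqrt x)/K'(\sqrt x)$ (this is precisely the inversion of the first equation in \eqref{subs} used throughout Sect.~\ref{S10}), and the standard theta-constant evaluations (Sect.~\ref{S2.1}, \cite{WW,weber,tannery}) give $\vartheta_3^2(\tau)=\tfrac{2}{\pi}K'(\sqrt x)$, hence $\vartheta_2^2(\tau)=\vartheta_3^2(\tau)\sqrt{1-x}=\tfrac{2}{\pi}\sqrt{1-x}\,K'(\sqrt x)$. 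Substituting the latter into the logarithm, the constant $2/\pi$ drops out under $d/dx$ and $\ln\vartheta_2^2$ splits (up to that constant) as $\tfrac12\ln(1-x)+\ln K'(\sqrt x)$; absorbing the square then yields at once the first form of the Proposition, since $2x(1-x)\cdot\tfrac12\frac{d}{dx}\ln\frac{(\Dtheta+2\pi A\theta_1)^2}{(1-x)\theta_1^2K'^2(\sqrt x)}$ reproduces it verbatim.

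The second form is then a one-line rearrangement: writing $\frac{\Dtheta+2\pi A\theta_1}{\theta_1}=\frac{\Dtheta}{\theta_1}+2\pi A$ and peeling off $-\ln\vartheta_2^2$, one gets $y=2x(1-x)\frac{d}{dx}\ln\!\big(\tfrac{\Dtheta}{\theta_1}+2\pi A\big)-2x(1-x)\frac{d}{dx}\ln\vartheta_2^2$, and the last term collapses to $E'(\sqrt x)/K'(\sqrt x)$ upon using $\frac{d}{dx}\ln\vartheta_2^2=\tfrac{1}{2(x-1)}+\frac{d}{dx}\ln K'(\sqrt x)$ together with the stated rule $2\,dK'/dx=E'/(x(x-1))-K'/(x-1)$. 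To finish, I would verify the displayed closed system for $(K,K',E,E')$ as functions of $\sqrt x$: these are the standard derivative formulas for complete elliptic integrals, which follow from the hypergeometric representations $K=\tfrac{\pi}{2}\,{}_2F_1(\tfrac12,\tfrac12;1;x)$, $E=\tfrac{\pi}{2}\,{}_2F_1(-\tfrac12,\tfrac12;1;x)$ (and their complementary counterparts) together with the hypergeometric ODE, the Legendre relation $EK'+E'K-KK'=\tfrac{\pi}{2}$ being available as an internal check.

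The main difficulty is purely one of bookkeeping rather than of content: keeping the modulus versus complementary-modulus conventions consistent once $\tau=\ri\,K/K'$ is imposed (which of $K,K'$ equals $\tfrac{\pi}{2}\vartheta_3^2$ and which carries $k$ versus $k'$), tracking the factors $\ri$, $2/\pi$ and the sign $\boldsymbol{\mathfrak{s}}=1$ correctly, and making sure that every term silently discarded under $d/dx$ is genuinely $x$-independent. Once those conventions are pinned down, each step is a mechanical differentiation using the $\theta$-calculus of Sects.~\ref{S4}--\ref{S6} and Theorem~\ref{T17}, so no further serious obstacle is expected.
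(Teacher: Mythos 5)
Your argument is correct and follows exactly the route the paper intends: the paper states the parametric $\tau$-form just before Proposition~\ref{P4} and leaves the conversion to the $x$-representation as an exercise in the differential calculus already developed, which is precisely what you carry out (chain rule via $dx/d\tau=-\pi\ri\,\vartheta_2^4 x$, Jacobi's identity giving the factor $x(1-x)$, the dictionary $\vartheta_3^2=\tfrac{2}{\pi}K'(\sqrt{x})$, $\vartheta_2^2=\tfrac{2}{\pi}\sqrt{1-x}\,K'(\sqrt{x})$, and the relabeling $A\mapsto-\ri A$). All the individual steps check out, including the cancellation producing $E'/K'$ in the second form, so there is nothing to add.
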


Verifying this form of solution by a direct substitution in \eqref{P6}
is a good and rather nontrivial exercise. We do not go into further
details because comprehensive analysis of this {\large$\btau$}-function
form, including additional motivation, explanations, and corollaries,
have been detailed in \cite{br2}, where the complete reference list can
also be found.

\end{document}